\newtheorem{theorem}{Theorem}
\newtheorem{lemma}[theorem]{Lemma}
\newtheorem{corollary}[theorem]{Corollary}
\newtheorem{conjecture}[theorem]{Conjecture}
\theoremstyle{definition}
\newtheorem{problem}[theorem]{Problem}
\newtheorem{remark}[theorem]{Remark}
\newtheorem{example}[theorem]{Example}
\numberwithin{equation}{section}
\numberwithin{theorem}{section}
\newcounter{thmenum}
\newenvironment{thmenumerate}{%
\begin{list}{$(\thethmenum)$}{%
\usecounter{thmenum}
\setlength{\labelsep}{.5em}
\setlength{\labelwidth}{-7pt}
\setlength{\topsep}{0pt}
\setlength{\partopsep}{0pt}
\setlength{\parsep}{0pt}
\setlength{\leftmargin}{3pt}
\setlength{\rightmargin}{0pt}
\setlength{\itemindent}{\leftmargin}
\setlength{\itemsep}{0pt}
}}
{\end{list}}
\newcommand\CC{\mathbb{C}}
\newcommand\RR{\mathbb{R}}
\newcommand\ZZ{\mathbb{Z}}
\newcommand\caln{\mathcal{N}}
\newcommand\frakg{\mathfrak{g}}
\newcommand\frakh{\mathfrak{h}}
\newcommand\frakk{\mathfrak{k}}
\newcommand\frakl{\mathfrak{l}^{\mathfrak{q}}}
\newcommand\frako{\mathfrak{o}}
\newcommand\frakq{\mathfrak{q}}
\newcommand\fraks{\mathfrak{s}}
\newcommand\fraksp{\mathfrak{sp}}
\newcommand\fraku{\mathfrak{u}}
\newcommand\frakgl{\mathfrak{gl}}
\newcommand{\GR}{G_{\RR}}
\newcommand{\LR}{L_{\RR}}
\newcommand{\LqR}{L_{\RR}^{\mathfrak{q}}}
\newcommand{\GpR}{G_{\RR}'}
\newcommand{\psgR}{P_{\RR}}
\newcommand{\calorbit}{\mathcal{O}}
\newcommand{\bborbit}{\mathbb{O}}
\newcommand\Gorbit{\calorbit^G}
\newcommand\Korbit{\bborbit^K}
\newcommand\Gporbit{\calorbit^{G'}}
\newcommand\Gpporbit{\calorbit^{G''}}
\newcommand\Kporbit{\bborbit^{K'}}
\newcommand\Korbitp{{\bborbit'}^{K}}
\newcommand{\Ograph}{\Gamma}
\DeclareMathOperator{\AVgraph}{\mathcal{AV}^\Gamma}
\DeclareMathOperator{\syd}{SYD}
\DeclareMathOperator{\yd}{YD}
\newcommand\sydbdi{\syd_{\mathrm{BDI}}}
\newcommand\sydci{\syd_{\mathrm{CI}}}
\newcommand\sydcii{\syd_{\mathrm{CII}}}
\newcommand\syddiii{\syd_{\mathrm{DIII}}}
\newcommand\sydx{\syd_{\mathrm{X}}}
\newcommand\ydbdi{\yd_{\mathrm{BDI}}}
\newcommand\ydci{\yd_{\mathrm{CI}}}
\newcommand\ydcii{\yd_{\mathrm{CII}}}
\newcommand\yddiii{\yd_{\mathrm{DIII}}}
\DeclareMathOperator{\Lie}{Lie}
\DeclareMathOperator{\Aut}{Aut}
\DeclareMathOperator{\Ind}{Ind}
\DeclareMathOperator{\AV}{\mathcal{AV}}
\DeclareMathOperator{\Mat}{Mat}
\DeclareMathOperator{\ind}{ind}
\DeclareMathOperator{\gind}{g\text{-}ind}
\newcommand{\restrict}{\big|}
\newcommand{\closure}[1]{\overline{#1}}
\newcommand{\conjugate}[1]{\overline{#1}}
\newcommand{\lie}[1]{\mathfrak{#1}}
\newcommand{\transpose}[1]{{}^t{#1}}
\def\abDab{\@ifnextchar[\abDab@{ab\cdots ab}}
\def\abDab@[#1]{\underbrace{ab\cdots ab}_{#1}}
\def\abDba{\@ifnextchar[\abDba@{ab\cdots ba}}
\def\abDba@[#1]{\underbrace{ab\cdots ba}_{#1}}
\def\baDab{\@ifnextchar[\baDab@{ba\cdots ab}}
\def\baDab@[#1]{\underbrace{ba\cdots ab}_{#1}}
\def\baDba{\@ifnextchar[\baDba@{ba\cdots ba}}
\def\baDba@[#1]{\underbrace{ba\cdots ba}_{#1}}
\def\abDDab{\@ifnextchar[\abDDab@{ab\cdots\cdots ab}}
\def\abDDab@[#1]{\overbrace{ab\cdots\cdots ab}^{#1}}
\def\abDDba{\@ifnextchar[\abDDba@{ab\cdots\cdots ba}}
\def\abDDba@[#1]{\overbrace{ab\cdots\cdots ba}^{#1}}
\def\baDDab{\@ifnextchar[\baDDab@{ba\cdots\cdots ab}}
\def\baDDab@[#1]{\overbrace{ba\cdots\cdots ab}^{#1}}
\def\baDDba{\@ifnextchar[\baDDba@{ba\cdots\cdots ba}}
\def\baDDba@[#1]{\overbrace{ba\cdots\cdots ba}^{#1}}
\newcommand\Rorbit{\bborbit^{G_\RR}}
\newcommand\Rporbit{\bborbit^{G'_\RR}}
\newcommand\call{\mathcal{L}}
\DeclareMathOperator{\AS}{\mathrm{AS}}
\DeclareMathOperator{\WF}{\mathrm{WF}}
\newcommand\frakn{\mathfrak{n}}
\newcounter{rbenum}
\newenvironment{rbenumerate}{%
\begin{list}{{\upshape(\therbenum)}}{%
\usecounter{rbenum}
\setlength{\labelsep}{.5em}
\setlength{\labelwidth}{-7pt}
\setlength{\topsep}{0pt}
\setlength{\partopsep}{0pt}
\setlength{\parsep}{0pt}
\setlength{\leftmargin}{3pt}
\setlength{\rightmargin}{0pt}
\setlength{\itemindent}{\leftmargin}
\setlength{\itemsep}{0pt}
}}
{\end{list}}
\newcommand{\mbfm}{\boldsymbol{m}}
\newcommand{\mbfp}{\boldsymbol{p}}
\newcommand{\fboxplus}{\Yvcentermath1\Yboxdim11pt\young(+)}
\newcommand{\fboxminus}{\Yvcentermath1\Yboxdim11pt\young(-)}
\newcommand{\version}{Ver.~0.0}
\newcommand{\setversion}[1]{\renewcommand{\version}{Ver.~{#1}}}
\title [Orbit graph of associated varieties]
{Codimension one connectedness of the graph of associated varieties}
\author{Kyo Nishiyama}
\address{
Department of Physics and Mathematics\\
Aoyama Gakuin University\\
Fuchinobe 5-10-1, Sagamihara 252-5258, Japan}
\email{kyo@gem.aoyama.ac.jp}
\thanks{Supported by JSPS Grant-in-Aid for Scientific Research (B) \#{21340006}.}
\author{Peter Trapa}
\address{Department of Mathematics, University of Utah, Salt Lake City, UT
84112, USA}
\email{ptrapa@math.utah.edu}
\thanks{}
\author{Akihito Wachi}
\address{
Department of Mathematics \\
Hokkaido University of Education \\
Shiroyama 1, Kushiro 085-8580 \\
Japan
}
\email{wachi@kus.hokkyodai.ac.jp}
\thanks{Supported by JSPS Grant-in-Aid for Scientific Research (C) \#{23540179}.}
\date{\version\quad(compiled on \today)}
\subjclass[2000]{Primary 22E45; Secondary 22E46, 05E10, 05C50}
\keywords{}
\begin{document}

\maketitle

\begin{abstract}
\quad\\
Let $ \pi $ be an irreducible Harish-Chandra $ (\lie{g}, K) $-module, and 
denote its associated variety by $ \AV(\pi) $.  
If $ \AV(\pi) $ is reducible, then each irreducible component must contain 
codimension one boundary component.  
Thus we are interested in the codimension one adjacency of nilpotent orbits for a symmetric pair $ (G, K) $.  
We define the notion of orbit graph and associated graph for $ \pi $, and study its structure for classical 
symmetric pairs; 
number of vertices, edges, connected components, etc.  
As a result, we prove that the orbit graph is connected for even nilpotent orbits.  

Finally, for indefinite unitary group $ U(p, q) $, we prove that 
for each connected component of the orbit graph $ \Ograph_K(\Gorbit_{\lambda}) $ thus defined, 
there is an irreducible Harish-Chandra module $ \pi $ whose associated graph is exactly equal to the connected component.
\end{abstract}

\tableofcontents

\section{Introduction}

Let $G$ be a connected reductive complex algebraic group,
and $(G,K)$ a symmetric pair,
that is,
$K$ is the fixed point subgroup of a non-trivial involution $\theta \in \Aut(G)$.  
Note that $ K $ need not be connected.  
The differential of the involution $\theta$ gives an automorphism of order two
of $\frakg = \Lie(G)$, which we will denote by the same letter.  
Let $\frakk$ and $\fraks$ be the eigenspaces of $ \theta $ with 
the eigenvalues $+1$ and $-1$, respectively.
Then a direct sum $\frakg = \frakk + \fraks$ gives  
the (complexified) Cartan decomposition
corresponding to the symmetric pair $(G,K)$.

Let $\caln(\fraks)$ be the set of nilpotent elements in $\fraks$,
which is a closed subvariety of $\fraks$,
and called the nilpotent variety of $\fraks$.
We call $K$-orbits in $\caln(\fraks)$
\emph{nilpotent orbits for a symmetric pair}.
%

It follows from Kostant-Rallis \cite{Kostant.Rallis.1971}
that the number of the $K$-orbits in $\caln(\fraks)$ is finite.
Moreover, the classification of nilpotent $ K $-orbits is completely known 
for simple $ G $, and if $G$ is classical, 
it is 
given combinatorially 
in terms of signed Young diagrams (see, e.g., \cite{Collingwood.McGovern.1993}).

When two nilpotent $K$-orbits in $\caln(\fraks)$ generate the same $G$-orbit $\Gorbit$ in $\frakg$,
we call these two $K$-orbits are 
\emph{adjacent in codimension one} (or simply \emph{adjacent}) 
if the intersection of their closures contains a $ K $-orbit 
of codimension one. 
We consider a non-oriented {graph} $\Ograph_K(\Gorbit)$
with the vertices consisting of $K$-orbits on $\caln(\fraks)$ contained in $\Gorbit$,
and edges drawn if two $K$-orbits are adjacent.  
The graph is called an \emph{orbit graph}.
We study combinatorial structures of the graph $\Ograph_K(\Gorbit)$,
which are related to representation-theoretic problem
on the geometry of associated varieties of Harish-Chandra modules.

For example, the number of vertices of $\Ograph_K(\Gorbit)$ gives the number of nilpotent $ K $-orbits which 
generates the same $ \Gorbit $.  
This roughly classifies irreducible Harish-Chandra modules 
with a fixed infinitesimal character 
which have annihilators with the same associated variety.  
We give {generating functions} 
of the number of the nilpotent orbits 
for classical symmetric pairs in \S 
\ref{section:generating.function.of.K.orbits}.  
There we also give generating functions of 
the number of vertices of $\Ograph_K(\Gorbit)$ for individual orbits.

From a viewpoint of representation theory, 
nilpotent $K$-orbits in $\caln(\fraks)$ and their closures 
occur as  irreducible components of the associated varieties of Harish-Chandra modules.
For an irreducible Harish-Chandra module $X$, 
its associated variety $\AV(X)$ decomposes into irreducible components as 
\begin{align}
  \label{eqn:AV.decompose.into.K.orbits}
  \AV(X) = \bigcup_{i = 1}^{\ell} \closure{\Korbit_i} ,
\end{align}
where $\Korbit_i$ are nilpotent $K$-orbits in $\caln(\fraks)$, 
which generate a common nilpotent $ G $-orbit $ \Gorbit $.  
The closure of the $ G $-orbit $ \Gorbit $ is an associated variety of the 
primitive ideal of $ X $.
Thus we get a full subgraph of $\Ograph_K(\Gorbit)$ with vertices 
\begin{equation*}
\{ {\Korbit_i} \mid \closure{\Korbit_i} \text{ is an irreducible component of $ \AV(X) $} \} .  
\end{equation*}
We denote this subgraph by $ \AVgraph(X) $, and call it an \emph{associated graph} of $ X $.  
Here we omit the subscript $ K $, because the Harish-Chandra module $ X $ already encodes it.

Vogan's theorem (\cite[Theorem~4.6]{Vogan.1991}) suggests that the following conjecture is plausible to hold.

\begin{conjecture}
If $ X $ is an irreducible Harish-Chandra $ (\lie{g}, K) $-module, 
the associated graph $ \AVgraph(X) $ is connected.
\end{conjecture}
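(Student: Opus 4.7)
The plan is a two-step reduction: (I) show that $\AVgraph(X)$ is a union of connected components of the ambient orbit graph $\Ograph_K(\Gorbit)$, and (II) show that irreducibility of $X$ rules out its being a union of more than one component.

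For Step (I), I would invoke Vogan's Theorem~4.6 as the key ingredient. All irreducible components $\overline{\Korbit_i}$ in \eqref{eqn:AV.decompose.into.K.orbits} have the common dimension $\tfrac12\dim\Gorbit$, so a $K$-orbit $\Korbit'\subset\Gorbit$ sitting in $\partial\overline{\Korbit_i}$ with codimension one is precisely a neighbour of $\Korbit_i$ in $\Ograph_K(\Gorbit)$. Vogan's theorem controls such codimension-one boundary behaviour of the associated cycle: interpreted appropriately, if $\Korbit'\subset\Gorbit\cap\partial\overline{\Korbit_i}$ has codimension one in $\overline{\Korbit_i}$ but fails to appear as an irreducible component of $\AV(X)$, then a certain boundary coefficient coming from the $\fraks$-action on $\operatorname{gr}X$ produces a contradiction. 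Hence the vertex set of $\AVgraph(X)$ is stable under taking neighbours in $\Ograph_K(\Gorbit)$, and it is therefore a union of connected components of $\Ograph_K(\Gorbit)$.

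For Step (II), I would argue by contradiction. Suppose the vertex set of $\AVgraph(X)$ splits as $V_1\sqcup V_2$ with each $V_j$ a nonempty union of connected components from Step (I). Setting $A_j=\bigcup_{\Korbit\in V_j}\overline{\Korbit}$, we obtain $\AV(X)=A_1\cup A_2$ with no codimension-one adjacency between $A_1$ and $A_2$. I would then try to promote this geometric separation to an algebraic splitting: choose a good $K$-invariant filtration of $X$, decompose the associated graded sheaf $\operatorname{gr}X$ along the (set-theoretically overlapping but codimension-one-disjoint) pieces $A_1$ and $A_2$, and lift the summand supported on $A_1$ to a proper $(\frakg,K)$-submodule of $X$, contradicting irreducibility.

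The main obstacle is Step (II). Decomposition of $\operatorname{gr}X$ by support does \emph{not} automatically lift to a decomposition of $X$, because the $\fraks$-part of the $\frakg$-action shifts filtration degree and can couple $A_1$ with $A_2$ through their intersection, which lies in codimension $\geq 2$ but need not be empty. A complete argument will likely require finer machinery than Vogan's theorem alone --- for instance Beilinson--Bernstein localisation of $X$ as a $K$-equivariant $\mathcal{D}$-module on the flag variety together with a connectedness analysis of its characteristic cycle, or the cell structure on irreducible Harish-Chandra modules at fixed infinitesimal character. The paper's concrete verification of the conjecture for $U(p,q)$ at the end should serve as a template for what such a general argument must produce.
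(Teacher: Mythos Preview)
The statement you are attempting to prove is explicitly labelled a \emph{conjecture} in the paper; there is no proof of it in the paper to compare against. The paper offers it as a plausible statement motivated by Vogan's Theorem~4.6 and then provides supporting evidence in special cases (the converse direction for $U(p,q)$, and the even-orbit case for classical pairs). So any claimed full proof would be new.

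Your Step (I) already fails. Vogan's Theorem~4.6 asserts only that if $\AV(X)$ has at least two irreducible components, then the boundary $\partial\overline{\Korbit_i}$ of each component has codimension one in $\overline{\Korbit_i}$. It does \emph{not} say that any codimension-one piece of $\partial\overline{\Korbit_i}$ is shared with the closure of some other $\Korbit_j$ appearing in $\AV(X)$, nor even with any other $K$-orbit in $\Gorbit\cap\fraks$. The paper says this explicitly just after quoting Vogan's theorem: a codimension-one boundary component ``might be only contained in $\Korbit$ itself, so it does not contribute to the connectedness in codimension one.  Both cases are possible and actually occur.'' Your phrase ``interpreted appropriately'' is carrying the entire weight of the conjecture, and there is no known interpretation of Vogan's theorem that yields what you claim. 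There is also a notational confusion in your write-up: you speak of $\Korbit'\subset\Gorbit\cap\partial\overline{\Korbit_i}$, but every $K$-orbit inside $\Gorbit\cap\fraks$ has dimension $\tfrac12\dim\Gorbit$, so none of them lies in the boundary of $\overline{\Korbit_i}$. Adjacency in $\Ograph_K(\Gorbit)$ is defined via a \emph{shared} lower-dimensional boundary orbit, not via one top orbit lying in the boundary of another.

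You correctly flag Step (II) as the hard part, but in fact Step (I) is already the conjecture in disguise: the assertion that the vertex set of $\AVgraph(X)$ is closed under taking neighbours in $\Ograph_K(\Gorbit)$ is essentially the connectedness statement once you also have Vogan's codimension-one result available. No currently known argument (localisation, cell theory, or otherwise) establishes this in general; that is precisely why it remains a conjecture.
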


In the case of a symmetric pair of type AIII, 
we will prove 

\begin{theorem}[Theorem \ref{thm:for-AIII} below]
\label{intro.theorem:main.result.AIII}
Let $ G_{\RR} = U(p,q) $, an indefinite unitary group, 
and $ (G, K) = (GL_{n}(\CC), GL_{p}(\CC) \times GL_{q}(\CC)) \; (n = p + q) $ be an associated symmetric pair of type {\upshape{AIII}}.  
Let us consider a nilpotent $ G $-orbit $ \Gorbit $ in $ \lie{g} $.  
For any connected component in the orbit graph $\Ograph_K(\Gorbit)$, 
there exists an irreducible Harish-Chandra $ (\lie{g}, K) $-module $X$ 
whose associated graph $\AVgraph(X)$ 
is exactly the chosen connected component.  
\end{theorem}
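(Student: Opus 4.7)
The plan is to attack the theorem combinatorially via signed Young diagrams, and then match each connected component of the orbit graph with an explicit Harish-Chandra module built by cohomological induction (or, as a fallback, by theta lifting from a smaller unitary group).

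First I would pin down each connected component $C$ of $\Ograph_K(\Gorbit)$ in closed combinatorial form. In type AIII, nilpotent $K$-orbits inside a fixed $G$-orbit $\Gorbit = \Gorbit_\lambda$ are in bijection with signed Young diagrams of shape $\lambda$ and signature $(p,q)$, and the codimension-one adjacency edges are governed by local sign-swap moves acting on blocks of rows of equal length (and, where relevant, on adjacent rows whose lengths differ by one). From the definition of these moves, the multiset of row signatures within each block of rows of equal length is preserved by adjacency; I would argue that this row-block signature data is in fact a complete invariant, i.e., two signed Young diagrams lie in the same component iff their row-block signature data coincide.

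Next, for each component $C$ I would exhibit a Harish-Chandra module $X = X(C)$ whose associated graph is $C$. The starting point is a $\theta$-stable parabolic $\frakq \subset \frakg$ whose $\theta$-stable Levi is adapted to the row-block data of $C$; in the weakly fair range the derived functor module $A_\frakq(\mu)$ is irreducible with $\AV$ the closure of a single distinguished $K$-orbit lying in $C$. To sweep out the full component, I would push $\mu$ into an appropriate singular wall, or equivalently apply a translation functor sending a weakly fair module onto the wall; the resulting module's $\AV$ acquires additional $K$-orbit components, and these should all remain inside $C$ because its primitive ideal and infinitesimal character together pin down the row-block invariant of $C$.

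The main obstacle is the lower bound $\AV(X) \supseteq \bigcup_{\Korbit_i \in C} \closure{\Korbit_i}$; the reverse inclusion follows from the standard functoriality of $\AV$ under cohomological induction. I would handle the lower bound either (i) inductively along edges of $C$, translating each codimension-one sign-swap into a wall-crossing step on the representation side and tracking the change in $\AV$ via Vogan's theorem (\cite[Theorem~4.6]{Vogan.1991}), or (ii) by computing the associated cycle of $X$ through the Kazhdan--Lusztig--Vogan parametrization of irreducibles for $U(p,q)$ in terms of clans, and extracting the components from the leading terms. The delicate point is matching the signed-Young-diagram picture of $C$ with the clan picture of associated cycles, and verifying that the chosen $\mu$ realizes exactly $C$ rather than a proper subcomponent; this is where the connectedness information from the orbit graph analysis of the earlier sections should be essential, ensuring that whenever one orbit of $C$ appears, its codimension-one neighbors in $C$ must appear as well.
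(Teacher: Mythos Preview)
There are two genuine gaps.

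First, your combinatorial description of the connected components is incorrect. You assert that adjacency is given by ``local sign-swap moves acting on blocks of rows of equal length (and, where relevant, on adjacent rows whose lengths differ by one)'' and that the resulting row-block signature data is a complete invariant. But permuting rows of equal length does not change a signed Young diagram at all, and the actual adjacency relation (Lemma~\ref{lem:AIII-adjacent-in-codim-1}, Theorem~\ref{thm:AIII-description-of-graph}) swaps the starting sign of a row of length $i_r$ with that of a row of length $i_{r+1}$, where $i_r>i_{r+1}$ are two \emph{consecutive distinct} row-lengths of the \emph{same parity} (together with the move $\pm e_k$ when the shortest part $i_k$ is even). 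These moves do change $m^+(i_r)$ and $m^+(i_{r+1})$, so your proposed invariant is not preserved. The correct invariant is the vector $\mbfp\in P(\lambda;p,q)$ of Lemma~\ref{lem:complete-representatives}, which records the number of $+$-starting rows in each maximal run of rows between consecutive parity changes of the $\lambda_j$; this is what governs the decomposition in Theorem~\ref{thm:product-of-graph}.

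Second, your construction of $X(C)$ via $A_{\frakq}(\mu)$ followed by ``pushing $\mu$ to a singular wall'' is not precise enough to yield either inclusion. Vogan's theorem only says that \emph{if} $\AV(X)$ is reducible then each component has a codimension-one boundary; it does not say \emph{which} adjacent orbits appear, so your option~(i) cannot be executed, and option~(ii) is left entirely unexecuted. The paper's argument is structurally different: it uses \emph{real parabolic} induction from a Levi $U(p',q')\times GL_{h_1}(\CC)\times\cdots\times GL_{h_t}(\CC)$, matched step by step with the combinatorial operation $\gind$ of re-inserting pairs of equal-length columns (\S\ref{subsec:AIII-induction.of.subgraph}). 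One strips such pairs from $\lambda$ until reaching $\lambda'$ with distinct column lengths, where the orbit graph has no edges (Theorem~\ref{thm:number.of.connected.components.of.orbit.graph}); each single vertex $\{T'\}$ is realized as $\AV(\pi_{T'})$ for a derived functor module of $U(p',q')$ by Barbasch--Vogan; then Lemma~\ref{key-lemma:induction.one.step} (proved either via the wave-front formula~\eqref{e:WFind} or via the cohomological-induction rewrite~\eqref{e:transferclaim} and~\eqref{e:avcomp}) shows $\AVgraph\bigl(I_{\psgR}^{\GR}(\pi_{T'};\nu)\bigr)=\gind(\{T'\})$ for generic $\nu$, which is precisely a connected component by Lemma~\ref{lemma:gind.of.connected.components}. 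The key point you are missing is that a single closed formula for the associated variety of a parabolically induced module gives both inclusions at once; no wall-crossing or edge-by-edge tracking is needed.
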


This theorem is a partial converse to the conjecture above.  
For a general classical symmetric pair including type AIII, 
we also have the following

\begin{theorem}
\label{intro.theorem:even.nilpotent.orbit.and.deg.PS}
Let $ (G, K) $ be a classical symmetric pair corresponding to 
a real form $ G_{\RR} $ of $ G $.  
If $ \Gorbit $ is an even nilpotent orbit, then the orbit graph $ \Ograph_K(\Gorbit) $ is connected, and 
there exists an irreducible degenerate principal series representation $ \pi $ of $ G_{\RR} $ 
such that $ \AVgraph(\pi) = \Ograph_K(\Gorbit) $.
\end{theorem}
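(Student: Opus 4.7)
The plan is to prove the two assertions by largely independent arguments, unified through the Jacobson--Morozov parabolic of $ \Gorbit $. Connectedness of $ \Ograph_K(\Gorbit) $ is handled combinatorially via signed Young diagrams; the existence of $ \pi $ is handled by constructing a degenerate principal series and computing its associated variety.

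For the connectedness statement, I would classify the $ K $-orbits in $ \Gorbit $ by signed Young diagrams of fixed underlying shape $ \lambda $ and identify a list of codimension one \emph{elementary moves} on such diagrams --- row swaps and flips in type AIII, and the analogous symmetric moves in types BDI, CI, CII, DIII --- each corresponding to a codim-1 adjacency established in the earlier sections of the paper. The goal is then to show that for an \emph{even} partition $ \lambda $, any two admissible signed Young diagrams of shape $ \lambda $ are connected by a sequence of elementary moves. Evenness is exactly what removes the end-of-row obstructions which, in the odd case, can leave the graph disconnected.

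For the principal series, fix a normal $ \mathfrak{sl}_2 $-triple $ (e,h,f) $ with $ e \in \Gorbit \cap \fraks $ and $ h \in \frakk $ via Kostant--Sekiguchi. Since $ \Gorbit $ is even, all $ \operatorname{ad} h $-eigenvalues on $ \frakg $ are even integers, so the Jacobson--Morozov parabolic $ P = LN $ attached to $ h $ is $ \theta $-stable and descends to a real parabolic $ \psgR = \LR N_\RR $ of $ \GR $. Take $ \pi = \Ind_{\psgR}^{\GR}(\chi) $ for a unitary character $ \chi $ of $ \LR $ in the irreducibility locus (which exists by the standard reducibility criteria for degenerate principal series of classical real groups, e.g.\ Howe--Tan, Sahi, Speh--Vogan). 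The standard associated variety formula for parabolically induced modules gives
\begin{equation*}
\AV(\pi) \;=\; \overline{K \cdot (\frakn \cap \fraks)} \;\subset\; \closure{\Gorbit} \cap \fraks,
\end{equation*}
where the inclusion uses the Richardson property of $ P $.

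The main obstacle is the reverse inclusion: one must show that $ K \cdot (\frakn \cap \fraks) $ contains a representative of every $ K $-orbit $ \Korbit_i $ with $ G \cdot \Korbit_i = \Gorbit $. I would verify this type by type using an explicit matrix model for the Jacobson--Morozov parabolic in each classical family, exhibiting a representative $ e_\Lambda \in \Korbit_i \cap (\frakn \cap \fraks) $ for each admissible signed Young diagram $ \Lambda $. Evenness forces $ \frakn \cap \fraks $ to contain no half-integer $ \operatorname{ad} h $-eigenspace, and the $ L \cap K $-action on the degree-two piece $ \fraks_2 $ has enough freedom to realize every admissible sign pattern as an orbit representative; this is where the evenness hypothesis plays its decisive role, since the presence of $ \fraks_1 $ in the odd case destroys the needed rigidity. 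Combining this surjectivity with the connectedness from the first step yields $ \AV(\pi) = \bigcup_i \closure{\Korbit_i} $ and hence $ \AVgraph(\pi) = \Ograph_K(\Gorbit) $, which completes the proof.
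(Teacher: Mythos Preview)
Your connectedness sketch is broadly compatible with the paper's approach (which extracts connectedness for even $\Gorbit$ from the explicit connected-component count in Theorems~\ref{thm:number.of.connected.components.of.orbit.graph} and \ref{thm:number.of.connected.components-BDI-CI-CII-DIII}), though note that ``even nilpotent orbit'' means all parts of $\lambda$ have the \emph{same} parity, not that all parts are even.

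The principal series construction, however, has a genuine error. You take a Kostant--Sekiguchi triple with $e\in\fraks$ and $h\in\frakk$, and then assert that the Jacobson--Morozov parabolic attached to $h$ ``is $\theta$-stable and descends to a real parabolic $\psgR$.'' The first clause is correct; the second is false. A $\theta$-stable parabolic is almost never the complexification of a real parabolic: for the normal triple one has $\sigma(h)=-h$ (where $\sigma$ is conjugation with respect to $\frakg_\RR$), so $\sigma$ sends $\frakq=\frakg_{\geq 0}(\operatorname{ad} h)$ to the \emph{opposite} parabolic. Already for $SL_2(\RR)$ the normal triple has $h\in i\,\fraksl_2(\RR)$, and the resulting Borel is not defined over $\RR$. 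Thus there is no $\psgR$ to induce from, and your formula $\AV(\pi)=\overline{K\cdot(\frakn\cap\fraks)}$---which is the associated-variety formula for \emph{cohomological} induction from a $\theta$-stable parabolic, not for real parabolic induction---is being applied to an object that does not produce a degenerate principal series.

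The paper (via the cited \cite{Nishiyama.2011}) instead starts from a \emph{real} Jacobson--Morozov triple for a real representative of $\Gorbit$, so that $h\in\frakg_\RR$ lies in a maximally split torus and the attached parabolic is genuinely real; the associated variety of $\Ind_{\psgR}^{\GR}(\chi)$ is then computed through the wave-front/asymptotic-support formula \eqref{e:WFind} and the Schmid--Vilonen theorem, or equivalently through the asymptotic cone description in \cite{Nishiyama.2011}. If you want to salvage your approach, replace the KS triple by a real one and redo the associated-variety step accordingly; the ``$L\cap K$ acts with enough freedom on $\fraks_2$'' heuristic then has to be reformulated on the real side, where it becomes the statement that $G_\RR\cdot\frakn_\RR$ meets every real form of $\Gorbit$.
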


For this, see Remark~\ref{remark:even.nilpotent.is.associated.variety}.

These theorems show that 
the combinatorial structure of orbit graphs seems important and interesting.  
In \S~\ref{section:structure.of.orbit.graph.AIII}, for a symmetric pair of type AIII, 
we study the structure of the orbit graph $ \Ograph_K(\Gorbit) $, 
and obtain a combinatorial description of $ \Ograph_K(\Gorbit) $ in 
Theorem \ref{thm:AIII-description-of-graph}.
In particular, we can give an explicit formula which gives the number of connected components of the graph.  
For the precise statement, see Theorem \ref{thm:number.of.connected.components.of.orbit.graph} 
and the arguments before it.

The main tool of our arguments is an induction of graphs introduced in \S~\ref{subsec:AIII-induction.of.subgraph}.  
The induction carries a connected component of the orbit graph of a smaller nilpotent orbit to that of a larger (or induced) nilpotent orbit.  

The combinatorial arguments in \S~\ref{section:structure.of.orbit.graph.AIII} can be carried over to the other classical symmetric pairs.  
The results thus obtained are summarized in \S~\ref{section:other.classical.symmetric.pairs}; 
among them, we determine the connected components of orbit graphs and prove that there is only one connected component for an even nilpotent orbit 
(a part of the claim of Theorem~\ref{intro.theorem:even.nilpotent.orbit.and.deg.PS}).  

Theorem~\ref{intro.theorem:main.result.AIII} above is proved in \S~\ref{section:AV.of.HC.module} for type AIII.  
Essentially this theorem claims that the induction of orbit graphs described in purely combinatorial manner and 
the cohomological (or parabolical) induction of representations match up.  
It is natural to expect a similar result for other symmetric pairs 
and our combinatorial arguments in \S~\ref{section:other.classical.symmetric.pairs} strongly suggest such statements.  
This is a future subject of ours.


\section{Preliminaries}

Let $ G $ be a connected reductive algebraic group over the complex number field $ \CC $.  
Let $ G_\RR $ be the connected component of the identity of a noncompact real form of $ G $.  
We denote by $ K_\RR $ a maximal compact subgroup of $ G_\RR $, so that 
$(G_\RR, K_\RR)$ is a symmetric pair with respect to a Cartan involution.  
Let $\frakg_\RR$ and $\frakk_\RR$ be the Lie algebras of $ G_\RR $ and $ K_\RR $ respectively, and 
$\frakg_\RR =  \frakk_\RR + \fraks_\RR$ be the associated Cartan decomposition.  
In general, we denote by $ H_\RR $ a real Lie group, and $ H $ its complexified algebraic group (if it exists).    
We also use corresponding German small letters to denote their Lie algebras; 
so $ \frakh_\RR $ is the Lie algebra of $ H_\RR $ and $ \frakh $ its complexification.

Pick a nilpotent $ G $-orbit $ \Gorbit $ in $ \frakg $, and 
let 
\begin{equation}
\label{eqn:G.orbit.decompose.into.K.orbits}
\Gorbit \cap \fraks = \coprod_{k=1}^m \Korbit_k
\end{equation}
be the decomposition of $\Gorbit$ into equidimensional Lagrangian $K$-orbits 
(see, e.g., \cite[Corollary~5.20]{Vogan.1991}).  
We will denote a nilpotent $G$-orbit in $\frakg$ by $\Gorbit$
(or $\Gorbit_\lambda$ when it is parameterized by a partition $\lambda$ in the classical cases),
and a nilpotent $K$-orbit in $\fraks$ by $\Korbit$
(or $\Korbit_T$ when parameterized by a signed Young diagram $T$).

Two nilpotent $K$-orbits $\Korbit_k$ and $\Korbit_{\ell}$ are said to be 
\emph{adjacent} 
if these two nilpotent $ K $-orbits appear in  
the decomposition
\eqref{eqn:G.orbit.decompose.into.K.orbits} of $ \Gorbit $, and they 
share a boundary of codimension one. 
Also we say two nilpotent orbits 
$ \Korbit $ and $ \Korbitp $ are \emph{connected in codimension one} 
if there exists a sequence of nilpotent $ K $-orbits $ \Korbit = \Korbit_{k_1}, \Korbit_{k_2}, \dots , 
\Korbit_{k_r} = \Korbitp $ such that each successive pair $ ( \Korbit_{k_i} , \Korbit_{k_{i+1}} ) $ 
is an adjacent pair.  

We define a graph $\Ograph_K(\Gorbit)$ 
with vertices $ \{ \Korbit_1, \Korbit_2, \ldots, \Korbit_m \} $ 
and edges given by the adjacency relation.  
The graph $ \Ograph_K(\Gorbit) $ is called an \emph{orbit graph}.

Now let $ X $ be an irreducible Harish-Chandra $ (\frakg, K) $-module and 
let 
\begin{equation*}
\AV(X) = \bigcup_{i=1}^{\ell} \closure{\Korbit_i}
\end{equation*}
be the irreducible decomposition
of its associated variety.  
The labeling of $ \Korbit_i $ by $ i = 1, \dots, \ell $ is now different from those 
which are used in \eqref{eqn:G.orbit.decompose.into.K.orbits}, 
but it is known that each $ \Korbit_i $ will generate the same nilpotent $ G $-orbit $ \Gorbit = \Gorbit_X $.  
In fact, $ \closure{\Gorbit_X} $ is the associated variety of the primitive ideal of $ X $.  
Therefore we can consider $ \{ \Korbit_1, \dots, \Korbit_{\ell} \} $ 
as a subset of vertices of $ \Ograph_K(\Gorbit_X) $, and we define the full subgraph 
\begin{math}
  \AVgraph(X) 
\end{math}
of $ \Ograph_K(\Gorbit_X) $, 
whose vertices are the irreducible components of $\AV(X)$,
and whose edges are the ones in $\Ograph_K(\Gorbit_X)$.

Vogan proved in 
\cite[Theorem~4.6]{Vogan.1991} 
that 
the codimension in $\Korbit_i$ of its boundary
$\partial \Korbit_i = \closure{\Korbit_i} \setminus \Korbit_i$
is equal to one 
if $\AV(X)$ is reducible (i.e., $\ell \ge 2$).

The boundary of codimension one of the closure of a nilpotent $ K $-orbit $ \Korbit $ is generally reducible, and 
one of its irreducible components might be contained in the closure of 
another $K$-orbit $\Korbit_0$, hence $ \Korbit $ and $ \Korbit_0 $ are adjacent; 
or it might be only contained in $\Korbit$ itself, 
so it does not contribute to the connectedness in codimension one.  
Both cases are possible and actually occur.
However, it is plausible that the following conjecture holds.
In Conjecture~\ref{conj:codim-one-connected} and
Problem~\ref{prob:general-case} below, $K$ is not necessarily connected.  
In fact, if we take a connected component of the fixed point subgroup of the involution $ \theta $, 
the claim of the conjecture becomes even stronger.  

\begin{conjecture}\quad
\label{conj:codim-one-connected}
Let $X$ be an irreducible Harish-Chandra $ (\mathfrak{g}, K) $-module,
and 
$\AV(X) = \bigcup_{i=1}^{\ell} \overline{\Korbit_i}$
the irreducible decomposition of its associated variety.
Then the graph $\AVgraph(X)$ is connected.
Namely, for any pair $ ( \Korbit_i , \Korbit_j ) $, 
there exist a sequence of nilpotent $K$-orbits 
\begin{equation*}
\Korbit_i = \Korbit_{i_0},  \; \Korbit_{i_1}, \; \Korbit_{i_2}, \; \dots , \;
\Korbit_{i_n} = \Korbit_j 
\end{equation*}
such that 
$ \closure{\Korbit_{i_{k- 1}}} \cap \closure{\Korbit_{i_k}} \; (1 \leq k \leq n) $ contains a 
nilpotent $ K $-orbit of codimension one.
\end{conjecture}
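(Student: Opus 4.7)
The plan is to take Vogan's codimension-one theorem (\cite[Theorem~4.6]{Vogan.1991}) as the starting point: given an irreducible Harish-Chandra module $X$ with $\AV(X) = \bigcup_{i=1}^{\ell} \closure{\Korbit_i}$ and $\ell \ge 2$, each $\Korbit_i$ has boundary of codimension one in $\closure{\Korbit_i}$. What one must show is that, for every $i$, the codimension-one boundary of $\closure{\Korbit_i}$ contains at least one $K$-orbit shared with some $\closure{\Korbit_j}$ with $j \ne i$, and that the resulting edges suffice to make $\AVgraph(X)$ connected. The central difficulty, flagged right before the conjecture, is the possibility that a codimension-one boundary $K$-orbit is contained only in $\closure{\Korbit_i}$ itself and not in any other $\closure{\Korbit_j}$; ruling out that such ``self-boundaries'' can disconnect the graph is the crux of the problem.

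I would attack this along two complementary routes. First, following the pattern of Theorem~\ref{intro.theorem:main.result.AIII} for type AIII and Theorem~\ref{intro.theorem:even.nilpotent.orbit.and.deg.PS} for even orbits, I would try to realize $X$ as an irreducible constituent of a cohomologically induced module $\indg_{\frakq, L \cap K}^{\frakg, K}(Z)$ for some $\theta$-stable parabolic $\frakq$, and then track $\AV$ under this induction. If the purely combinatorial graph-induction of Section~\ref{subsec:AIII-induction.of.subgraph} can be matched with cohomological induction for every classical pair (as it is matched in type AIII in this paper), then $\AVgraph(X)$ appears as a connected subgraph of an induced orbit graph, and one can finish by induction on the rank after checking that induction preserves the property of being a single connected component. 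Second, for $X$ not accessible by induction (a ``unipotent'' situation), I would analyze the characteristic cycle $CC(X)$: the positivity of its multiplicities and its integrality under Kashiwara's index formula should force the irreducible components of $\AV(X)$ to be linked by nontrivial codimension-one contributions, because a floating component whose only codimension-one boundary is self-contained would create a local term inconsistent with the irreducibility of $X$.

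The main obstacle will be the unipotent case, where no parabolic reduction is available and $CC(X)$ is not accessible by Kazhdan--Lusztig--Vogan recursion. Even granting that for inducible $X$ the combinatorial induction of orbit graphs studied in this paper matches the representation-theoretic induction, a uniform geometric statement forcing codimension-one \emph{adjacency} across all irreducible components of $\AV(X)$ is not currently available. I expect the argument to split into a case analysis by symmetric pair type, with the BDI, CI, CII and DIII combinatorics of signed Young diagrams handled by extensions of the type AIII arguments in Section~\ref{section:structure.of.orbit.graph.AIII}, and with genuinely unipotent representations requiring additional microlocal or $W$-algebra input. The cleanest possible route would be a refinement of Vogan's theorem asserting codimension-one adjacency directly rather than merely codimension-one boundary, but such a refinement is in effect the conjecture itself, so any honest proof seems to require genuinely new geometric information beyond what is available in the present setup.
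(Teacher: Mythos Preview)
The statement you are addressing is labelled \emph{Conjecture} in the paper, and the paper does not prove it; it is stated precisely as an open problem motivating the subsequent work. There is therefore no ``paper's own proof'' to compare against. What the paper does establish are partial and converse results: Theorem~\ref{thm:for-AIII} shows that for type AIII every connected component of an orbit graph $\Ograph_K(\Gorbit)$ is realized as $\AVgraph(X)$ for some irreducible $X$, and Theorem~\ref{intro.theorem:even.nilpotent.orbit.and.deg.PS} handles even nilpotent orbits. Neither of these implies the conjecture.

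Your proposal is not a proof but a research outline, and you essentially acknowledge this in your final paragraph. The two routes you sketch both have genuine gaps you yourself identify. For the inductive route, even in type AIII the paper only shows that $\gind$ of a connected component is again a connected component of the larger orbit graph; it does \emph{not} show that every irreducible $X$ arises by induction from a module whose associated graph is already known to be connected, so the induction never gets off the ground. For the characteristic-cycle route, the assertion that positivity and integrality of $CC(X)$ ``should force'' codimension-one linking between components is exactly the content of the conjecture and is not supported by any cited mechanism; Vogan's theorem gives codimension-one boundary, not codimension-one adjacency to another component, and you correctly note that closing this gap is the whole problem. In short, your write-up is an accurate assessment of why the statement remains a conjecture, not a proof of it.
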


Taking this conjecture into account, in this paper, 
we consider the following problems.
First three are combinatorial problems, 
and remaining two are representation-theoretic ones.  

\begin{problem}
\label{prob:general-case}
Let us consider a symmetric pair $ (G, K) $ as above, and 
let $\Gorbit$ be a nilpotent $G$-orbit in $\frakg$.
\begin{rbenumerate}
\item
\label{prob:item:graph.structure}
Describe the explicit structure of the orbit graph $ \Ograph_K(\Gorbit) $.

\item
\label{prob:item:number.of.connected.components.of.graph}
Find the number of connected components of $ \Ograph_K(\Gorbit) $.

\item
\label{prob:item:number.of.K.orbits}
Find the number of $ K $-orbits in $ \Gorbit \cap \fraks $.

\item
\label{prob:item:irrep.for.connected.G.orbit}
Assume that the graph $\Ograph_K(\Gorbit)$ is connected.  
Does there exist an irreducible Harish-Chandra
$ (\frakg, K) $-module $X$ 
such that $\Ograph_K(\Gorbit) = \AVgraph(X)$?

\item
\label{prob:item:irrep.for.connected.component}
More generally,
for any connected component $Z \subset \Ograph_K(\Gorbit)$,
does there exist an irreducible Harish-Chandra module $X$
such that $Z = \AVgraph(X)$?
Here a connected component of a graph means a maximal connected full subgraph.

\end{rbenumerate}
\end{problem}

We will answer most of these problems in the classical cases.

If the intersection of $ G $-orbits with $ \lie{s} $ is always a single $ K $-orbit, 
most of our problems above become trivial.  So we omit these cases.  
However, our problem does hold in such cases.  

Thus, in the following, we only 
consider classical symmetric pairs 
of type AIII, BDI, CI, CII, DIII 
in the notation of \cite[Chapter X, Table V]{Helgason.1978}.

\section{The number of nilpotent orbits for a symmetric pair}
\label{section:generating.function.of.K.orbits}

In this section, 
we solve 
Problem~\ref{prob:general-case}~\eqref{prob:item:number.of.K.orbits} 
for the classical symmetric pairs.  
For classical symmetric pairs, a classification of $K$-orbits in $\fraks$ 
and their closure relations
are obtained 
by Takuya Ohta \cite{Ohta.1986} 
(see also \cite{Kraft.Procesi.1979}, \cite{Burgoyne.Cushman.1977} and \cite{Djokovic.1982}) 
and we use Ohta's result in the following case-by-case arguments.

\subsection{Type AIII $(GL_{p+q}(\CC), GL_{p}(\CC) \times GL_{q}(\CC))$}
In the following, we denote $GL_n(\CC)$ simply by $GL_n$ and use similar abbreviation for other classical groups.
Let us consider a symmetric pair
\begin{equation*}
(G, K ) = (GL_{n}, GL_{p} \times GL_{q}) \qquad 
(n = p+q), 
\end{equation*}
where $ K $ is embedded into $ G $ block diagonally.  
Thus the corresponding Cartan decomposition is
\begin{equation*}
\lie{g} = \lie{k} \oplus \lie{s} , \qquad
\lie{k} = \lie{gl}_p \oplus \lie{gl}_q, 
\quad
\lie{s} = \Mat(p,q; \CC) \oplus \Mat(q,p; \CC), 
\end{equation*}
where $ \lie{s} $ is anti-diagonally embedded into $ \lie{g} $.  

Let us first recall that the nilpotent $ G = GL_n $-orbits 
in $ \frakg = \lie{gl}_n $ 
are parameterized by the partitions of $ n $, 
i.e., 
collections of the size of Jordan blocks arranged in non-increasing order.  
To each partition $ \lambda = ( \lambda_1, \dots, \lambda_{\ell}) $ of $ n $, 
we associate a nilpotent orbit denoted by $ \Gorbit_{\lambda} $.
When $ \Gorbit_{\lambda} $ is given, 
a connected component of 
its intersection $ \Gorbit_{\lambda} \cap \fraks $ 
with $\fraks $
is a nilpotent $ K $-orbit in $\fraks$, 
and every nilpotent $ K $-orbit in $\fraks$ appears in this way.  
It is known that these $K$-orbits are parameterized by 
the \emph{signed Young diagrams on $\lambda$ of signature $(p,q)$}:
\begin{align*}
  \Gorbit_\lambda \cap \fraks
  =
  \coprod_{T \in \syd(\lambda;p,q)} \Korbit_T.
\end{align*}
Here $\syd(\lambda; p,q)$ denotes 
the set of 
signed Young diagrams $ T $ on $\lambda$ of signature $(p,q)$ which satisfy
\begin{rbenumerate}
\item
$T$ has the same shape as $\lambda$.
\item
There are $p$ boxes with $({+})$-sign and 
$q$ boxes with $({-})$-sign in $ T $.
\item
Signs are alternating in each row
(in columns signs may run in any order).
\end{rbenumerate}
From this description, 
we get the generating function of the number of the nilpotent $K$-orbits
on $\fraks$ as follows.

\begin{theorem}
\label{thm:AIII-generating-funct}
Denote a partition $\lambda$ of $n$ 
as $\lambda = [1^{m_1} \cdot 2^{m_2} \cdots n^{m_n}]$ 
by using the multiplicities $m_i$ of $i$.
Then we have
\begin{multline}
  \label{eq:AIII-generating-funct}
  \sum_{\substack{p, \, q \ge 0,  \; \lambda \vdash (p+q)}}
  \# \syd(\lambda;p,q) \, a^p b^q t_{\lambda}
  \\ 
  =
  \prod_{k=1}^\infty \dfrac{1}{(1-a^k b^k t_{2k})^2} \cdot
  \dfrac{1}{1-a^{k-1} b^k t_{2k-1}} \cdot
  \dfrac{1}{1-a^k b^{k-1} t_{2k-1}},
\end{multline}
where $t_{\lambda} = t_1^{m_1} t_2^{m_2} \cdots$,
and $\lambda \vdash (p+q) $ means $\lambda$ is a partition of $p+q$.
This formula is an equality in the ring of formal power series in variables 
$a$, $b$, $t_1$, $t_2$, $\ldots$.
\end{theorem}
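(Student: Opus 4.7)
The plan is to prove this as a multiplicative identity: the generating function factors as an infinite product indexed by row length $\ell$, and each factor reduces to a routine geometric-series calculation. First, I would establish that $T \in \syd(\lambda;p,q)$ is the same data as, for each $\ell \geq 1$, an unordered sequence of $m_\ell$ sign patterns on the rows of length $\ell$. Since the alternating condition forces each row to be determined by its leading sign, and since signed Young diagrams identify configurations differing only by a permutation of equal-length rows, this data is equivalent to a pair $(r_\ell, s_\ell) \in \ZZ_{\geq 0}^2$ with $r_\ell + s_\ell = m_\ell$, recording how many rows of length $\ell$ start with $+$ and with $-$, respectively. The choices made for distinct $\ell$ are clearly independent.

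Next, I would tabulate the weight contributed by rows of a fixed length $\ell$. If $\ell = 2k$, both starting signs yield $k$ pluses and $k$ minuses, so the block of $m_\ell$ rows contributes $a^{k m_\ell} b^{k m_\ell} t_{2k}^{m_\ell}$, independently of how the rows split into $(+)$-starters and $(-)$-starters. If $\ell = 2k - 1$, a row starting with $+$ contains $k$ pluses and $k - 1$ minuses while a row starting with $-$ contains $k - 1$ pluses and $k$ minuses, so the block contributes $(a^k b^{k-1})^{r_\ell} (a^{k-1} b^k)^{s_\ell} t_{2k-1}^{r_\ell + s_\ell}$. By the independence of the choices over $\ell$, summing the total weight over $\syd(\lambda; p, q)$ and then over $\lambda$ amounts to summing each $(r_\ell, s_\ell)$ freely over $\ZZ_{\geq 0}^2$ and taking the product.

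Finally I would evaluate each factor. For $\ell = 2k$, one has $\sum_{r, s \geq 0} (a^k b^k t_{2k})^{r+s} = \sum_{m \geq 0}(m+1)(a^k b^k t_{2k})^m = (1 - a^k b^k t_{2k})^{-2}$. For $\ell = 2k - 1$, the double sum factors into $(1 - a^k b^{k-1} t_{2k-1})^{-1}(1 - a^{k-1} b^k t_{2k-1})^{-1}$. Taking the product over $k \geq 1$ recovers the right-hand side of \eqref{eq:AIII-generating-funct}. There is no substantial obstacle here: the one conceptual point is the convention that equal-length rows of a signed Young diagram are unordered, and this is precisely what produces the square in the factor $(1 - a^k b^k t_{2k})^{-2}$; without this identification one would instead obtain a factor of $(1 + a^k b^k t_{2k})^{m_{2k}}$ summed with binomial coefficients, which does not match the claimed product.
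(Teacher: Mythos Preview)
Your proof is correct and essentially identical to the paper's: both arguments decompose a signed Young diagram into its rows (the paper calls the four row-types $\epsilon_k^\pm, \delta_k^\pm$ ``primitives''), observe that a diagram is uniquely specified by the multiplicities of each primitive, and then compute the generating function as a product of geometric series, one per primitive. Your $(r_\ell, s_\ell)$ bookkeeping is exactly the paper's $(e_k^\pm, d_k^\pm)$, and the evaluation of each factor is the same.
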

\begin{proof}
Set $\syd = \bigcup_{p,q\ge0, \lambda\vdash p+q} \syd(\lambda;p,q)$,
and define the map $\phi$ by
\begin{align*}
\begin{array}{ccccc}
\phi: & \syd & \to & \CC[\hspace{-0.25ex}[a,b,t_1,t_2,\ldots]\hspace{-0.25ex}]
\\
& T & \mapsto & a^p b^q t_\lambda & T \in (\syd(\lambda;p,q)).
\end{array}
\end{align*}
Then it is obvious that
$\sum_{T\in\syd} \phi(T)$ is equal to the left-hand side of 
(\ref{eq:AIII-generating-funct}).

A signed Young diagram is a union of rows of the following four types:
{
\newcommand{\tinycdots}{\raisebox{1pt}[5pt][0pt]{$\scriptscriptstyle\cdots$}}
\Yboxdim10pt%
\begin{align*}
\epsilon_k^+ &= \scriptsize\young(+-+-\tinycdots\tinycdots+-) 
&& (\text{length is $2k$}), \\
\epsilon_k^- &= 
 \scriptsize\young(-+-+\tinycdots\tinycdots-+)
&& (\text{length is $2k$}), \\
\delta_k^+ &= 
 \scriptsize\young(+-+\tinycdots\tinycdots-+)
&& (\text{length is $2k-1$}), \\
\delta_k^- &= 
\scriptsize\young(-+-\tinycdots\tinycdots+-)
&& (\text{length is $2k-1$}).
\end{align*}
\\
We call these diagrams {\em primitives} of signed Young diagrams of type AIII.
}%
Using primitives we can write $\syd$ as 
\begin{align*}
\syd = \left\{
\sum_{k\ge0} (e_k^+ \epsilon_k^+ + e_k^- \epsilon_k^- +
d_k^+ \delta_k^+ + d_k^- \delta_k^-)
\biggm|
e_k^\pm, d_k^\pm \ge 0
\right\},
\end{align*}
where sum means the sum of rows.
Thus we have 
\begin{align*}
\sum_{T\in\syd} \phi(T) 
& =
\sum_{\substack{ 
    e_1^\pm, e_2^\pm, \ldots \ge 0, \\ d_1^\pm, d_2^\pm, \ldots \ge 0
}}
\phi\left(
\sum_{k\ge1} (e_k^+ \epsilon_k^+ + e_k^- \epsilon_k^- +
d_k^+ \delta_k^+ + d_k^- \delta_k^-)
\right)
\\ &=
\sum_{\substack{ 
    e_1^\pm, e_2^\pm, \ldots \ge 0, \\ d_1^\pm, d_2^\pm, \ldots \ge 0
}}
\prod_{k\ge1} \phi(\epsilon_k^+)^{e_k^+} \phi(\epsilon_k^-)^{e_k^-}
\phi(\delta_k^+)^{d_k^+} \phi(\delta_k^-)^{d_k^-}
\\ &=
\prod_{k\ge1} 
\sum_{e_k^+\ge0} \phi(\epsilon_k^+)^{e_k^+} 
\sum_{e_k^-\ge0} \phi(\epsilon_k^-)^{e_k^-}
\sum_{d_k^+\ge0} \phi(\delta_k^+)^{d_k^+} 
\sum_{d_k^-\ge0} \phi(\delta_k^-)^{d_k^-}
\\ &=
\prod_{k\ge1} 
\frac{1}{1-\phi(\epsilon_k^+)} \cdot
\frac{1}{1-\phi(\epsilon_k^-)} \cdot
\frac{1}{1-\phi(\delta_k^+)} \cdot
\frac{1}{1-\phi(\delta_k^-)}.
\end{align*}
This is equal to the right-hand side of (\ref{eq:AIII-generating-funct}),
since 
$\phi(\epsilon_k^\pm) = a^k b^k t_{2k}$,
$\phi(\delta_k^+) = a^k b^{k-1} t_{2k-1}$,
and $\phi(\delta_k^-) = a^{k-1} b^k t_{2k-1}$.
\end{proof}

\subsection{Types BDI, CI, CII, DIII}
\label{subsection:number.of.nilpotent.orbits.types.BDI-DIII}

We consider the symmetric pairs in Table~\ref{table:ssp.in.this.paper} in this paper.  
For other classical symmetric pairs, namely types AI and AII, 
the intersection $ \Gorbit_{\lambda} \cap \fraks $ is a single $ K $-orbit.  
So our problem becomes trivial.  
\begin{table}[htbp]
\caption{Table of symmetric pairs.}%
\label{table:ssp.in.this.paper}%
\hfil$
  \begin{array}{l@{\quad}l@{\;\;}l@{\;\;}l}
    \text{type} & (G,K) & n &
    \\ \hline
    \mathrm{AIII} & (GL_{p+q},GL_p \times GL_q) & p + q
    \\ 
    \mathrm{BDI} & (O_{p+q},O_p \times O_q) & p + q
    \\ 
    \mathrm{CI} & (Sp_{2p},GL_{p}) & 2 p
    \\ 
    \mathrm{CII} & (Sp_{p+q},Sp_{p}\times Sp_{q}) & p + q & (p, q : \text{even})
    \\ 
    \mathrm{DIII} & (O_{2p},GL_{p}) & 2 p
  \end{array}
$\hfil
\end{table}

In this table, 
for a symplectic group, we denote it by $ Sp_N $ in which $ N $ represents the dimension of the base symplectic space (or size of the matrices), 
hence $ N $ must be always even.  
Also in the case of type CI and DIII, we sometimes put $ q = p $ so that $ n = p + q $ holds.  
Thus, in the following, 
$ n $ always denotes the size of matrices in $ G $, 
and $ p $ or $ q $ denotes the size of the matrices of a simple factor of $ K $ (modulo its center).

Since the case of type AIII has been already treated, let us consider the other types, namely 
types BDI, CI, CII and DIII.  
For these symmetric pairs, 
nilpotent $G$-orbits on $\frakg$ and nilpotent $K$-orbits on $\fraks$ 
are parameterized by Young diagrams and signed Young diagrams
with suitable conditions, respectively.
In all these types, the conditions for signed Young diagrams
can be described by using primitives,
which consist rows of signed Young diagrams.
Primitives for these types are given in Table~\ref{table:primitives-of-syd} 
(\cite[Proposition~2]{Ohta.1991}; see also \cite[Proposition~2.2]{Trapa.2005}).
\begin{table}
  \caption{Primitives of signed Young diagrams.}
  \label{table:primitives-of-syd}
  \begin{tabular}{lllll}
    type & \multicolumn{2}{l}{primitives \; ($\{a,b\}=\{+,-\}$)}
    \\ \hline
    AIII 
    & $\begin{array}{c} ab \cdots ab \end{array}$(even),
    & $\begin{array}{c} ab \cdots ba \end{array}$(odd)
    \\ \hline
    BDI 
    & $\begin{array}{c} ab \cdots ba \end{array}$(odd),
    & $\begin{array}{c} ba \cdots ba \\ ab \cdots ab \end{array}$(even)
    \\ \hline
    CI
    & $\begin{array}{c} ab \cdots ab \end{array}$(even),
    & $\begin{array}{c} ab \cdots ba \\ ba \cdots ab \end{array}$(odd)
    \\ \hline
    CII 
    & $\begin{array}{l} ab \cdots ba \\ ab \cdots ba \end{array}$(odd),
    & $\begin{array}{l} ba \cdots ba \\ ab \cdots ab \end{array}$(even)
    \\ \hline
    DIII 
    & $\begin{array}{l} ba \cdots ba \\ ba \cdots ba \end{array}$(even),
    & $\begin{array}{l} ab \cdots ba \\ ba \cdots ab \end{array}$(odd)
  \end{tabular}
  \\
  ((even) or (odd) means the parity of the length.)
\end{table}

We denote by $\sydx(\lambda;p,q)$
the set of the signed Young diagrams for type X (X $=$ BDI, CI, CII, DIII) of shape $ \lambda $ 
with the convention that $ q = p $ in the case of type CI or DIII.  

Similarly we denote by $\yd_{\text{X}}(n)$
the set of the Young diagrams for type X.
Suppose we remove the signs in a signed Young diagram $ T $, and get a partition $ \lambda $, i.e., 
$ T \in \sydx(\lambda;p,q) $.  
Then a nilpotent $ K $-orbit $ \Korbit_T \subset \lie{s} $ 
corresponding to $ T $ generates a nilpotent $ G $-orbit 
$ \Gorbit_{\lambda} \subset \lie{g} $ 
corresponding to $ \lambda $.  
We get $ \yd_{\mathrm{X}}(n) $ in this way.

\begin{theorem}
\label{thm:BDI-CI-CII-DIII-generating-funct}
We have the generating functions of the numbers of 
the nilpotent $K$-orbits on $\fraks$ for the symmetric pairs of
types {\upshape{}BDI, CI, CII} and {\upshape{}DIII} as follows,
where the notation is the same as in 
Theorem~{\upshape\ref{thm:AIII-generating-funct}}.
\begin{rbenumerate}
\item
Write a partition $\lambda$ of $n=p+q$ of type {\upshape{}BDI}
as $\lambda = [1^{m_1} \cdot 2^{m_2} \cdots n^{m_n}] $
using the multiplicities $m_i$ of $i$.
Then we have
\begin{multline*}
  \sum_{\substack{p, \, q \ge 0,  \; \lambda \in \ydbdi(p+q)}}
  \# \sydbdi(\lambda;p,q) \, a^p \, b^q \, t_{\lambda}
  \\ 
  =
  \prod_{k=1}^\infty \dfrac{1}{1-a^{2k} b^{2k} t_{2k}^2} \cdot
  \dfrac{1}{1-a^{k-1} b^k t_{2k-1}} \cdot
  \dfrac{1}{1-a^k b^{k-1} t_{2k-1}}.
\end{multline*}

\item
Write a partition $\lambda$ of $n = 2 p$ of type {\upshape{}CI}
as $\lambda = [1^{m_1} \cdot 2^{m_2} \cdots n^{m_{n}}] $.
Then we have
\begin{multline*}
  \sum_{\substack{p \ge 0,  \; \lambda \in \ydci(2 p)}}
  \# \sydci(\lambda;p,p) \, a^p \, b^p \, t_{\lambda}
  \\ 
  =
  \prod_{k=1}^\infty 
  \dfrac{1}{(1-a^{k} b^{k} t_{2k})^2} \cdot
  \dfrac{1}{1-a^{2k-1} b^{2k-1} t_{2k-1}^2}.
\end{multline*}

\item
Write a partition $\lambda$ of $p+q$ of type {\upshape{}CII}
as $ \lambda = [1^{m_1} \cdot 2^{m_2} \cdots ] $.
Then the generating function of the number of nilpotent $K$-orbits
on $\fraks$ is given as follows.
\begin{multline*}
  \sum_{\substack{p, q \ge 0 \; (p, q : \text{even}),  \; \lambda \in \ydcii(p+q)}}
  \# \sydcii(\lambda;p,q) \, a^{p} \, b^{q} \, t_{\lambda}
  \\ 
  =
  \prod_{k=1}^\infty 
  \dfrac{1}{1-a^{2k-2} b^{2k} t_{2k-1}^2} \cdot
  \dfrac{1}{1-a^{2k} b^{2k-2} t_{2k-1}^2} \cdot
  \dfrac{1}{1-a^{2k} b^{2k} t_{2k}^2}. 
\end{multline*}

\item
Write a partition $\lambda$ of $n = 2 p$ of type {\upshape{}DIII}
as $\lambda = [1^{m_1} \cdot 2^{m_2} \cdots ] $.  
Then the generating function of the number of nilpotent $K$-orbits
on $\fraks$ is given as follows.
\begin{multline*}
  \sum_{\substack{p \ge 0,  \; \lambda \in \yddiii(2p)}}
  \# \syddiii(\lambda;p,p) \, a^{p} \, b^{p} \, t_{\lambda}
  \\ 
  =
  \prod_{k=1}^\infty 
  \dfrac{1}{(1-a^{2k} b^{2k} t_{2k}^2)^2} \cdot
  \dfrac{1}{1-a^{2k-1} b^{2k-1} t_{2k-1}^2}. 
\end{multline*}
\end{rbenumerate}
\end{theorem}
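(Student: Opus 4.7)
The plan is to mimic the argument of Theorem~\ref{thm:AIII-generating-funct} for each of the four types X $\in \{$BDI, CI, CII, DIII$\}$. The key input is Ohta's description (\cite[Proposition~2]{Ohta.1991}, recorded in Table~\ref{table:primitives-of-syd}) that, for each type X, every element of $\sydx$ is a disjoint union of primitives of type X, and that this decomposition is unique. Setting $\sydx = \bigcup_{p,q,\lambda} \sydx(\lambda;p,q)$ and $\phi(T) = a^p b^q t_\lambda$ for $T \in \sydx(\lambda;p,q)$, one observes that $\phi$ is multiplicative under disjoint union of rows, so
\begin{equation*}
\sum_{T \in \sydx} \phi(T) \;=\; \prod_{\pi} \frac{1}{1 - \phi(\pi)},
\end{equation*}
the product ranging over all distinct primitives $\pi$ of type X, indexed by the length parameter $k = 1, 2, \ldots$.

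The remaining task is to enumerate the primitives and compute $\phi(\pi)$ for each. Each entry of Table~\ref{table:primitives-of-syd} is a template depending on an assignment $\{a,b\} = \{+,-\}$; a priori each template yields two primitives, but in some cases the two sign assignments produce the same signed diagram (up to reordering the rows of a two-row block), so only one primitive actually arises. A direct inspection of the table shows that the swap-invariant templates are the even two-row primitive for BDI, the odd two-row primitive for CI, the even two-row primitive for CII, and the odd two-row primitive for DIII; each contributes a single factor to the product. The remaining four templates---the odd single-row primitive for BDI, the even single-row primitive for CI, the odd two-row primitive for CII, and the even two-row primitive for DIII---each contribute two factors, one for each sign choice. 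For a single-row primitive of length $\ell$ with $j$ pluses and $\ell - j$ minuses, $\phi(\pi) = a^j b^{\ell - j} t_\ell$; for a two-row primitive of row length $\ell$, the $a$- and $b$-exponents are the total sign counts over both rows and the $t$-factor is $t_\ell^2$. For example, the DIII odd two-row primitive $\{ab\cdots ba,\, ba\cdots ab\}$ of row length $2k-1$ is swap-invariant and contributes $(1 - a^{2k-1} b^{2k-1} t_{2k-1}^2)^{-1}$, while the DIII even two-row primitive $\{ba\cdots ba,\, ba\cdots ba\}$ of row length $2k$ is not, contributing $(1 - a^{2k} b^{2k} t_{2k}^2)^{-2}$. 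Collecting the contributions type by type and multiplying produces the right-hand side of each of the four claimed identities.

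The main obstacle is a purely bookkeeping step: one must decide, for each template in Table~\ref{table:primitives-of-syd}, whether the two sign assignments produce the same or distinct signed diagrams, and then tabulate the corresponding monomials $\phi(\pi)$---in particular distinguishing the two sub-cases (same $\phi$-value but different diagrams, as for the DIII even template; or different $\phi$-values, as for the CII odd template) that can both give two factors. Once this invariance pattern has been sorted out, no further input is needed beyond Ohta's unique-decomposition theorem, and the remainder of the computation is the same geometric-series manipulation as in the proof of Theorem~\ref{thm:AIII-generating-funct}.
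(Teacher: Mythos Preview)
Your proposal is correct and follows essentially the same approach as the paper: both reduce to the AIII argument, using the unique decomposition into the primitives of Table~\ref{table:primitives-of-syd} and associating to each primitive the factor $(1-\phi(\pi))^{-1}$. Your write-up is in fact more explicit than the paper's, which simply states the rule that a primitive with $k_+$ plus signs, $k_-$ minus signs, and rows of lengths $l_1,\ldots,l_d$ contributes $(1-a^{k_+}b^{k_-}t_{l_1}\cdots t_{l_d})^{-1}$ and leaves the swap-invariance bookkeeping to the reader.
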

\begin{proof}
The proof is similar to that of Theorem~\ref{thm:AIII-generating-funct}.  
If a primitive contains $k_+$ $({+})$'s and $k_-$ $({-})$'s,
and consists of rows of lengths $l_1, l_2, \ldots, l_d$,
then the generating function has a factor
\begin{gather*}
\frac{1}{1 - a^{k_+} b^{k_-} t_{l_1} t_{l_2} \cdots t_{l_d}}.
\end{gather*}
Thus the formulas immediately follows from Table~\ref{table:primitives-of-syd}.
\end{proof}

\section{Combinatorial description of orbit graphs \\ for type AIII}
\label{section:structure.of.orbit.graph.AIII}

In this section, we consider a symmetric pair 
$ (G, K) = (GL_n, GL_p \times GL_q) $ 
of type AIII.

\subsection{Structure of orbit graph}
\label{subsec:AIII-orbit-graph}

To describe the whole structure of the orbit graph $ \Ograph_K(\Gorbit_{\lambda}) $, 
we prepare some notions.

The vertices of the graph $ \Ograph_K(\Gorbit_{\lambda}) $ is 
the set of nilpotent $ K $-orbits:
\begin{equation*}
V(\Ograph_K(\Gorbit_{\lambda})) = \{ \Korbit_T \mid T \in \syd(\lambda; p, q) \} .  
\end{equation*}
We realize these vertices as points in the Euclidean $ k $-space $ \RR^k $.  
To describe it, we denote $ \lambda $ in slightly different manner from the notation before, 
namely 
\begin{equation}
\label{eqn:lambda.expressed.with.multiplicity}
\begin{aligned}
\lambda 
&= ( i_1, \dots, i_1, i_2, \dots, i_2 , \dots, i_k, \dots, i_k) \\
&= (i_1^{m(i_1)}, i_2^{m(i_2)}, \dots , i_k^{m(i_k)} ) , \\
& \qquad\qquad
i_1 > i_2 > \cdots > i_k > 0 , \quad m(i_j) > 0 \;\; (1 \leq j \leq k), \quad
\end{aligned}
\end{equation}
where $ m(i) = m_{\lambda}(i) $ is the multiplicity of $ i $ among the parts of $ \lambda $,
which is a function in $ i $ and $\lambda$.  
If we pick $ T $ from $ \syd(\lambda; p, q) $, 
there are $ m(i) $ rows of length $ i $ in $ T $.  
Among those $ m(i) $ rows, 
some of them will begin with the box $\fboxplus$, and the others begin with the box $\fboxminus$.  
We denote the number of rows which begin with $\fboxplus$ by $ m^+(i) = m_T^+(i) $.  
We also write $ m^-(i) = m(i) - m^+(i) $, which is the number of rows of length $ i $ starting with box 
$\fboxminus$.  

Let us define a map
$\pi : V(\Ograph_K(\Gorbit_\lambda)) 
\simeq \syd(\lambda;p,q) \to \RR^k$ by
\begin{equation}
\label{eq:definition.of.pi}
\pi(T) = ( m^+(i_1) , m^+(i_2) , \dots , m^+(i_k) ) 
\in \ZZ_{\geq 0}^k \subset \RR^k.
\end{equation}
These $ m^+(i_r) $'s must satisfy obvious inequalities
\begin{equation*}
0 \leq m^+(i_r) \leq m(i_r) \qquad (1 \leq r \leq k) , 
\end{equation*}
and a \emph{parity condition} 
\begin{align}
p - q 
&= \sum_{i_r \text{ : odd}} ( m^+(i_r) - m^-(i_r) ) \notag \\
&= 2 \sum_{i_r \text{ : odd}} m^+(i_r) - \sum_{i_r \text{ : odd}} m(i_r) .
\label{eq:parity.condition.for.pi}
\end{align}
Note that the difference $ m^+(i_r) - m^-(i_r) $ only contributes to the difference $ p - q $ when 
the row length $ i_r $ is odd (if it is even, there are the same number of $+$'s and $-$'s in that row), 
hence the above parity condition.  

Conversely, if $ (a_1, \dots, a_k ) \in \ZZ_{\geq 0}^k $ satisfies 
\begin{equation*}
0 \leq a_r \leq m(i_r) \qquad (1 \leq r \leq k) , 
\end{equation*}
and the parity condition
\begin{equation*}
p - q = 2 \sum_{i_r \text{ : odd}} a_r - \sum_{i_r \text{ : odd}} m(i_r) ,
\end{equation*}
then $ (a_1, \dots, a_k) $ is in the image of the map $ \pi $, i.e., 
$ \pi(T) = (a_1, \dots, a_k) $ for some $ T \in \syd(\lambda; p,q) $.

Thus we are left to determine the edges of the orbit graph.  
We first recall Ohta's result on cover relations
(i.e., closure relation $\Korbit_S \subset \closure{\Korbit_T}$
with no orbits in-between)
of nilpotent $K$-orbits on $\fraks$ \cite[Lemma 5]{Ohta.1991}.

\begin{lemma}
\label{lem:AIII-cover-relation}
Let $\mu$ and $\lambda$ be partitions of $n=p+q$.
For signed Young diagrams 
$S \in \syd(\mu;p,q)$ and $T \in \syd(\lambda;p,q)$,
the corresponding nilpotent $K$-orbits $\Korbit_S$ and $\Korbit_T$ on $\fraks$
satisfy 
\begin{gather*}
  \Korbit_S \subset \closure{\Korbit_T},
  \quad
  \text{and there is no $K$-orbit in-between},
\end{gather*}
if and only if one of the following three conditions holds:
\begin{enumerate}
  \item [(i)]
  $ \overline{S} = \begin{array}{l}
    \overbrace{\cdots\cdots ab}^u \\
    \underbrace{\cdots ba}_v 
  \end{array} $,
  $ \overline{T} = \begin{array}{l}
    \overbrace{\cdots\cdots ba}^{u+1} \\
    \underbrace{\cdots ab}_{v-1} 
  \end{array} $
  \quad
  $(u \ge v \ge 1)$

  \item [(ii)]
  $ \overline{S} = \begin{array}{l}
    \overbrace{ba\cdots\cdots}^u \\
    \underbrace{ab\cdots}_v 
  \end{array} $,
  $ \overline{T} = \begin{array}{l}
    \overbrace{ab\cdots\cdots}^{u+1} \\
    \underbrace{ba\cdots}_{v-1} 
  \end{array} $
  \quad
  $(u \ge v \ge 1)$

  \item [(iii)]
  $ \overline{S} = \begin{array}{l}
    \overbrace{\cdots\cdots ba}^u \\
    \underbrace{\cdots ba}_v 
  \end{array} $,
  $ \overline{T} = \begin{array}{l}
    \overbrace{\cdots\cdots ba}^{u+2} \\
    \underbrace{\cdots ba}_{v-2} 
  \end{array} $
  \quad
  $(u \ge v \ge 2, \; \text{$u-v$: even})$,
\end{enumerate}
where $\{a,b\} = \{+,-\}$,
and $\overline{S}$ and $\overline{T}$ denote
the diagrams obtained by removing common rows from $S$ and $T$.
\end{lemma}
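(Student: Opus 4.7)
The plan is to derive the lemma from the combinatorial description of the closure order on nilpotent $K$-orbits in $\fraks$ for type AIII, due to Djoković \cite{Djokovic.1982} and Ohta \cite{Ohta.1986}. The inclusion $\Korbit_S \subset \closure{\Korbit_T}$ is equivalent to a signed dominance condition on $\syd(\lambda;p,q)$: reading column by column, for every $c \geq 1$, the number of $+$-boxes (resp.\ $-$-boxes) in the first $c$ columns of $S$ does not exceed the corresponding count for $T$. The lemma thus reduces to classifying covers in this signed dominance poset.

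The first reduction is to strip common rows: if a row of given length and sign pattern appears in both $S$ and $T$, it contributes equally to every column partial sum, so deleting it from both preserves the cover property. This justifies the passage to $\overline{S}, \overline{T}$. For the ``if'' direction, I would then verify directly that each of the three moves (i), (ii), (iii) increases the column $+$- and $-$-partial sums by the smallest increment compatible with the row-alternation constraint, so that no proper intermediate diagram $U$ with $\overline{S} < U < \overline{T}$ can exist.

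For the ``only if'' direction, let $\mu$ and $\lambda$ denote the shapes of $\overline{S}$ and $\overline{T}$ respectively, and split into two cases. If $\mu = \lambda$, then only the signs change; the row-alternation constraint forces the affected rows to have lengths of the same parity, and tracking the minimal admissible column-sum shift between them produces exactly configuration (iii) with $u - v$ even. If $\mu \neq \lambda$, a single box must have migrated between two rows---otherwise one can construct an explicit intermediate diagram by moving boxes one at a time---and the sign attached to the migrated box, combined with the row-alternation requirement at both endpoints, yields precisely configuration (i) or (ii). The main obstacle is the ``pure sign'' case, where one must exclude finer column-level reshuffles by exhibiting intermediate diagrams whenever the proposed change does not match configuration (iii); this is the most delicate part of the case analysis, and it is precisely here that the parity condition $u - v$ even arises as an intrinsic constraint rather than an artifact of the bookkeeping.
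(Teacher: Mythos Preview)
The paper does not prove this lemma at all: it is simply quoted as Ohta's result \cite[Lemma~5]{Ohta.1991}, so there is nothing to compare your argument against on the paper's side.

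Your sketch, however, has a real structural error in the ``only if'' direction. You split into the cases $\mu=\lambda$ and $\mu\neq\lambda$ and claim that $\mu=\lambda$ produces configuration~(iii). But in configuration~(iii) the shapes are \emph{not} equal: $\overline{S}$ has rows of lengths $u$ and $v$, while $\overline{T}$ has rows of lengths $u+2$ and $v-2$. In fact the case $\mu=\lambda$ is vacuous for covers, since $\Korbit_S$ and $\Korbit_T$ would then lie in the same $G$-orbit $\Gorbit_\lambda$, hence have the same dimension (Lemma~\ref{lem:AIII-dim-formula}), and neither can sit properly in the other's closure. Consequently your treatment of $\mu\neq\lambda$, which assumes a \emph{single} box migrates between two rows, misses configuration~(iii) entirely: there a pair of boxes moves, and your ``one box at a time'' argument would wrongly insert an intermediate diagram. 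The parity constraint $u-v$ even is exactly what prevents such an intermediate signed diagram from existing (the signs at the row ends cannot be made compatible after a one-box move), and this is the mechanism your case analysis does not capture. If you want to carry out the argument yourself rather than cite Ohta, the correct organization is by how many boxes move (one versus two), not by whether the shape changes.
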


\begin{example}
The following is the graph of closure ordering of the nilpotent $K$-orbits
for the symmetric pair $(GL_6, GL_3 \times GL_3)$.
(See Figure~\ref{figure:closure.ordering.GL6,GL3XGL3}.)
%
\begin{figure}[htbp]
\includegraphics[scale=0.9]{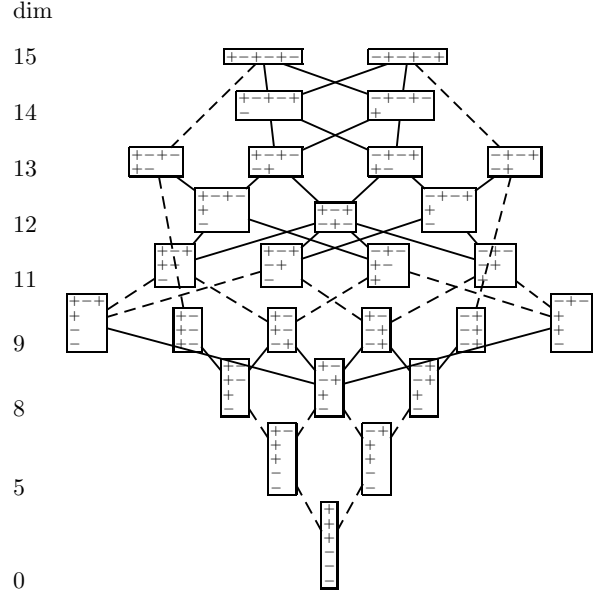}
\caption{Closure ordering: $ (GL_6, GL_3 \times GL_3) $.}
\label{figure:closure.ordering.GL6,GL3XGL3}
\end{figure}
\end{example}

\begin{example}
Figure~\ref{figure:closure.ordering.GL8,GL4XGL4} exhibits the graph of closure ordering of the nilpotent $K$-orbits
for the symmetric pair $(GL_8, GL_4 \times GL_4)$.

\begin{figure}[htbp]
\includegraphics[scale=0.8]{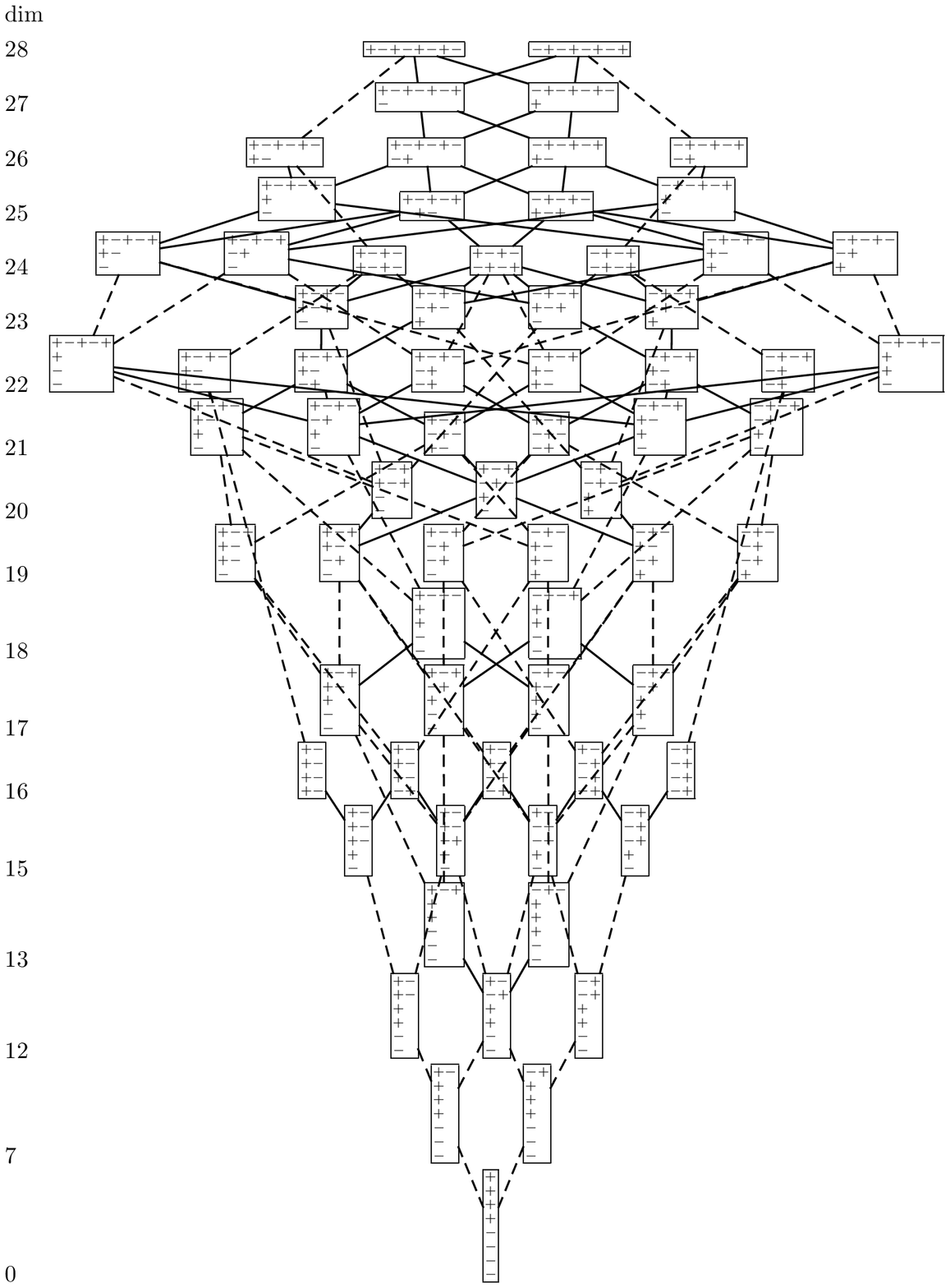}
\caption{Closure ordering: $ (GL_8, GL_4 \times GL_4) $.}
\label{figure:closure.ordering.GL8,GL4XGL4}
\end{figure}
\end{example}

In order to determine adjacency in codimension one,
we recall the dimension formula for $\Korbit_T$
(see \cite[Corollary~6.1.4]{Collingwood.McGovern.1993}, for example).

\begin{lemma}
\label{lem:AIII-dim-formula}
Let $\lambda$ be a partition of $n=p+q$, and $T \in \syd(\lambda;p,q)$.
The dimension of the nilpotent $K$-orbit $\Korbit_T$ is half of 
the dimension of the nilpotent $G$-orbit $\Gorbit_\lambda$,
and we have
\begin{gather*}
  \dim \Korbit_T = \frac{1}{2} \dim \Gorbit_\lambda =
  \frac{1}{2}\biggl( n^2 - \sum_{i=1}^r (\transpose{\lambda}_i)^2 \biggr),
\end{gather*}
where $\transpose{\lambda} = (\transpose{\lambda}_1, \transpose{\lambda}_2, \ldots, \transpose{\lambda}_r)$
denotes the transposed partition of $\lambda$.
\end{lemma}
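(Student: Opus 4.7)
The lemma packages two classical assertions: first, that $\dim \Korbit_T = \tfrac{1}{2}\dim \Gorbit_\lambda$ for every $T \in \syd(\lambda;p,q)$, and second, that $\dim \Gorbit_\lambda = n^2 - \sum_i (\transpose{\lambda}_i)^2$. The plan is simply to assemble two standard ingredients from the references already invoked by the paper.

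For the second equality, I would use the classical computation of the centralizer $\mathfrak{g}^e$ for a nilpotent $e \in \mathfrak{gl}_n$ of Jordan type $\lambda$. Decomposing $\CC^n$ as a direct sum of Jordan blocks $\bigoplus_j V_{\lambda_j}$ under $e$, one counts the linear endomorphisms commuting with $e$ block by block; the result is the familiar identity $\dim \mathfrak{g}^e = \sum_i (\transpose{\lambda}_i)^2$. Hence
\begin{equation*}
\dim \Gorbit_\lambda = \dim G - \dim \mathfrak{g}^e = n^2 - \sum_i (\transpose{\lambda}_i)^2,
\end{equation*}
which is exactly Corollary~6.1.4 of \cite{Collingwood.McGovern.1993} cited in the statement.

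For the first equality, the essential input is the Kostant--Rallis theorem: the irreducible components of $\Gorbit_\lambda \cap \fraks$ are equidimensional Lagrangian subvarieties of $\Gorbit_\lambda$ with respect to the Kostant--Kirillov symplectic form. This is precisely the content recalled in \eqref{eqn:G.orbit.decompose.into.K.orbits}, citing \cite[Corollary~5.20]{Vogan.1991}. Since those components are exactly the closures $\closure{\Korbit_T}$ for $T \in \syd(\lambda;p,q)$, each $K$-orbit $\Korbit_T$ has dimension $\tfrac{1}{2}\dim \Gorbit_\lambda$, independent of $T$.

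There is no substantive obstacle; the proof is bookkeeping around the two cited results. If one wanted a self-contained derivation of the Lagrangian fact for type AIII, the cleanest route is to pick a Kostant--Rallis $\fraks$-triple $(e, h, f)$ with $e \in \fraks$, $h \in \frakk$, decompose $\frakg$ into $\mathfrak{sl}_2$-isotypic components compatibly with $\theta$, and verify the identity $\dim \fraks^e - \dim \frakk^e = \dim \fraks - \dim \frakk$ weight by weight; this amounts to $\dim G^e - 2\dim K^e = \dim G - 2\dim K$, which rearranges to $\dim \Korbit_T = \tfrac{1}{2}\dim \Gorbit_\lambda$. But for the purposes of this lemma it suffices to quote \cite[Corollary~5.20]{Vogan.1991}.
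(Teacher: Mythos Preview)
Your proposal is correct and aligns with the paper's own treatment: the paper does not give an independent proof but simply refers to \cite[Corollary~6.1.4]{Collingwood.McGovern.1993} for the dimension formula, while the Lagrangian equidimensionality of the $K$-orbits has already been recorded earlier in the paper (around \eqref{eqn:G.orbit.decompose.into.K.orbits}) via \cite[Corollary~5.20]{Vogan.1991}. You have just spelled out in slightly more detail the content behind those two citations.
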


Thus we obtain cover relations of nilpotent $K$-orbits on $\fraks$
of codimension one,
and hence the condition for two nilpotent $K$-orbits
$\Korbit_S$ and $\Korbit_T$ ($S,T \in \syd(\lambda;p,q)$)
to be adjacent in codimension one.

\begin{lemma}
\label{lem:AIII-closure-rel-codim=1}
Let $\mu$ and $\lambda$ be partitions of $n=p+q$, 
and take $S \in \syd(\mu;p,q)$ and $T \in \syd(\lambda;p,q)$ respectively.
Then $\Korbit_S \subset \closure{\Korbit_T}$
and $\dim \Korbit_S = \dim \Korbit_T - 1$
if and only if one of the following two conditions holds.  
\begin{itemize}
  \item [(i)]
  $ \overline{S} = \begin{array}{l}
    \overbrace{\cdots\cdots ab}^u \\
    \underbrace{\cdots ba}_v 
  \end{array} $, \quad
  $ \overline{T} = \begin{array}{l}
    \overbrace{\cdots\cdots ba}^{u+1} \\
    \underbrace{\cdots ab}_{v-1} 
  \end{array} $
  \quad
  $(u \ge v \ge 1)$,\\
  and $T$ has no rows of length $\ell = u,u-1,\ldots,v$.
  
  \item [(ii)]
  $ \overline{S} = \begin{array}{l}
    \overbrace{ba\cdots\cdots}^u \\
    \underbrace{ab\cdots}_v 
  \end{array} $,
    \quad
  $ \overline{T} = \begin{array}{l}
    \overbrace{ab\cdots\cdots}^{u+1} \\
    \underbrace{ba\cdots}_{v-1} 
  \end{array} $
  \quad
  $(u \ge v \ge 1)$,\\
  and $T$ has no rows of length $\ell = u,u-1,\ldots,v$.
\end{itemize}
\end{lemma}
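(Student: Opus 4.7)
The plan is to combine the classification of cover relations in Lemma~\ref{lem:AIII-cover-relation} with the dimension formula of Lemma~\ref{lem:AIII-dim-formula}. Any strict inclusion $\Korbit_S \subset \closure{\Korbit_T}$ with $\dim\Korbit_S = \dim\Korbit_T - 1$ is automatically a cover in the closure order, since no intermediate orbit could have dimension strictly between two consecutive integers. Hence the codimension-one inclusions are precisely the covers in Lemma~\ref{lem:AIII-cover-relation} whose dimension drop equals one, and the task reduces to computing the dimension jump in each of the three types (i), (ii), (iii) and selecting those with drop one.

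For cases (i) and (ii), moving a single box takes the partition $\mu$ (with parts of size $u$ and $v$ in the distinguished place) to $\lambda$ (with $u+1$ and $v-1$ in that place), so only two entries of the transpose change, namely $\transpose{\lambda}_{u+1}=\transpose{\mu}_{u+1}+1$ and $\transpose{\lambda}_v=\transpose{\mu}_v-1$. A short calculation with Lemma~\ref{lem:AIII-dim-formula} then gives
\begin{equation*}
\dim\Korbit_T-\dim\Korbit_S = (\transpose{\mu}_v-\transpose{\mu}_{u+1})-1,
\end{equation*}
and the right-hand side equals the number of parts of $\mu$ of length in $[v,u]$, minus one. Since the two moving rows of $\mu$ have lengths $u,v\in[v,u]$ while the two moving rows of $\lambda$ have lengths $u+1,v-1\notin[v,u]$, the parts of $\mu$ in $[v,u]$ apart from the two distinguished ones come exactly from common rows of $S$ and $T$. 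Thus the dimension drop is one if and only if $T$ has no other rows of length in $\{v,v+1,\dots,u\}$, which is precisely the stated criterion.

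For case (iii), two boxes move simultaneously, so four entries of the transpose shift by $\pm 1$. The analogous computation gives a dimension drop of
\begin{equation*}
(\transpose{\mu}_v-\transpose{\mu}_{u+1})+(\transpose{\mu}_{v-1}-\transpose{\mu}_{u+2})-2,
\end{equation*}
and since each bracketed difference is at least $2$ (contributed by the special rows of lengths $u$ and $v$ themselves), the drop is always at least $2$. Consequently case (iii) never produces a codimension-one relation and is correctly omitted from the statement. The one thing to watch is the bookkeeping of which entries of $\transpose{\mu}$ actually shift in each cover type; once that is set up carefully, both the codimension-one criterion in (i)--(ii) and the exclusion of (iii) fall out directly from the dimension formula.
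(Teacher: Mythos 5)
Your proof is correct and follows the same route as the paper: reduce to the cover relations of Lemma~\ref{lem:AIII-cover-relation} and then compute the dimension drop via Lemma~\ref{lem:AIII-dim-formula}, concluding that case (iii) always drops dimension by at least two while cases (i) and (ii) drop by exactly one precisely when $T$ has no extra row of length in $\{v,\dots,u\}$. The paper's proof is a terse two-sentence version of exactly this; you have usefully spelled out the arithmetic (the changes $\transpose{\lambda}_{u+1}=\transpose{\mu}_{u+1}+1$, $\transpose{\lambda}_v=\transpose{\mu}_v-1$, the resulting drop $\transpose{\mu}_v-\transpose{\mu}_{u+1}-1$, and the bound $\ge 2$ in case (iii)) and also made explicit the small but clean observation that a codimension-one inclusion is automatically a cover, which the paper leaves implicit.
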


\begin{proof}
Among three cases in Lemma~\ref{lem:AIII-cover-relation},
it turns out that in Case (iii) the codimension is always greater than one
by Lemma~\ref{lem:AIII-dim-formula}.
In Cases (i) and (ii) 
the codimensions are one
if and only if $T$ has no rows between two rows 
in $\overline{T}$.
\end{proof}

\begin{lemma}
\label{lem:AIII-adjacent-in-codim-1}
Let $\lambda$ be a partition of $n=p+q$, and $T, T' \in \syd(\lambda;p,q)$.
Then $\Korbit_T$ and $\Korbit_{T'}$ are adjacent in codimension one
if and only if one of the following two conditions holds.  
\begin{itemize}
  \item [(i)]
  $ \overline{T} = \begin{array}{l}
    \overbrace{ab\cdots\cdots ab}^{2u} \\
    \underbrace{ba\cdots ba}_{2v} 
  \end{array} $,\;\;
  $ \overline{T'} = \begin{array}{l}
    \overbrace{ba\cdots\cdots ba}^{2u} \\
    \underbrace{ab\cdots ab}_{2v} 
  \end{array} $
  \quad
  $(u > v \ge 0)$,\\
  and $\lambda$ has no rows of length $\ell = 2u-1,2u-2,\ldots,2v+1$.
  
  \item [(ii)]
  $ \overline{T} = \begin{array}{l}
    \overbrace{ab\cdots\cdots ba}^{2u+1} \\
    \underbrace{ba\cdots ab}_{2v+1} 
  \end{array} $,\;\;
  $ \overline{T'} = \begin{array}{l}
    \overbrace{ba\cdots\cdots ab}^{2u+1} \\
    \underbrace{ab\cdots ba}_{2v+1} 
  \end{array} $
  \quad
  $(u > v \ge 0)$,\\
  and $\lambda$ has no rows of length $\ell = 2u,2u-1,\ldots,2v+2$.
\end{itemize}
\end{lemma}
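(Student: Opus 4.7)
The plan is to use Lemma~\ref{lem:AIII-closure-rel-codim=1} to characterize codimension-one adjacency via a shared cover: $\Korbit_T$ and $\Korbit_{T'}$ are adjacent if and only if there is a nilpotent $K$-orbit $\Korbit_S$ with $\Korbit_S \subset \closure{\Korbit_T} \cap \closure{\Korbit_{T'}}$ and $\dim \Korbit_S = \dim \Korbit_T - 1 = \dim \Korbit_{T'} - 1$; each of the covers $T \to S$ and $T' \to S$ must be of the Ohta Case~(i) or Case~(ii) type described in Lemma~\ref{lem:AIII-closure-rel-codim=1}. First I would establish two structural facts about such a shared $S$: (a) the two rows of $T$ modified to produce $S$ must coincide with the two rows of $T'$ modified to produce $S$; otherwise the unmodified rows of one of $T, T'$ would be forced by $S$ to have lengths incompatible with $\lambda$. (b) The two reductions $T \to S$ and $T' \to S$ must use \emph{different} Ohta cases, since applying the same case to a fixed pair of rows of $S$ reconstructs the preimage uniquely, which would force $T = T'$.

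Thus without loss of generality $T$ reduces to $S$ via Ohta Case~(i) on rows of lengths $\alpha + 1$ and $\beta - 1$ (where $\alpha \geq \beta \geq 1$ are the lengths of the corresponding rows in $S$), and $T'$ reduces to the same $S$ via Ohta Case~(ii) on the same pair. Next I would carry out the compatibility analysis for the two modified rows of $S$: the row of $S$ of length $\alpha$ must satisfy both the Case~(i) end-of-row condition (from $T$) and the Case~(ii) start-of-row condition (from $T'$); together with the alternating-signs structure, this pins down the content of $S$ and thereby of $T, T'$. A parity analysis of $\alpha$ and $\beta$ then produces exactly three sub-cases: when $\alpha, \beta$ are both odd, the modified rows of $T, T'$ have even lengths $2u = \alpha + 1$ and $2v = \beta - 1$, yielding Case~(i) of the lemma; when both are even, the modified rows have odd lengths $2u+1$ and $2v+1$, yielding Case~(ii); and when $\alpha, \beta$ have opposite parities, $T$ and $T'$ end up with different signatures $(p, q)$, so cannot both lie in $\syd(\lambda; p, q)$ and this case is excluded.

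Finally I would translate the codimension-one condition from Lemma~\ref{lem:AIII-closure-rel-codim=1}, namely that $T$ has no rows of length $\ell$ with $\beta \leq \ell \leq \alpha$, into the stated conditions on $\lambda$: no rows of length in $\{2v+1, \dots, 2u-1\}$ in Case~(i), and no rows of length in $\{2v+2, \dots, 2u\}$ in Case~(ii). The converse follows by reversing the construction: given $T, T'$ in either of the two stated forms, applying Ohta Case~(i) to $T$ produces an explicit $S$ that is simultaneously the Ohta Case~(ii) reduction of $T'$, and the hypothesis on $\lambda$ guarantees codimension one. The main obstacle is the careful bookkeeping of the $\{a, b\}$ placeholder assignments, which may differ between the Case~(i) and Case~(ii) readings of a single row of $S$; it is precisely this discrepancy, interacting with the parity of $\alpha$ and $\beta$, that produces the two cases in the conclusion and rules out the mixed-parity situation via the signature obstruction.
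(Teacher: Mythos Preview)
Your approach is essentially the same as the paper's: both proofs hinge on the observation that a shared codimension-one boundary orbit $\Korbit_S$ must be related to $T$ by one of the two Ohta cases in Lemma~\ref{lem:AIII-closure-rel-codim=1} and to $T'$ by the other, and then a parity analysis of the two row lengths of $\overline{S}$ produces the dichotomy (i)/(ii). Your structural facts (a) and (b) make explicit what the paper leaves implicit, and your treatment of the converse direction is sound.

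There is, however, one genuine error in your reasoning. You claim that when $\alpha$ and $\beta$ have opposite parities, $T$ and $T'$ ``end up with different signatures $(p,q)$.'' This is not correct: both Ohta operations preserve the total count of $+$'s and $-$'s, so any $T$ and $T'$ built from a common $S$ automatically have the same signature as $S$. The actual obstruction is an \emph{internal inconsistency in $S$ itself}. If the first row of $\overline{S}$ has odd length, then comparing its first and last symbols forces the $\{a,b\}$ assignment in Case~(i) to agree with that in Case~(ii); if it has even length, the two assignments must be swapped. The second row of $\overline{S}$ imposes the analogous constraint. When $\alpha$ and $\beta$ have opposite parities these two constraints contradict each other, so no signed row pattern on $\overline{S}$ can simultaneously satisfy the Case~(i) end-of-row condition and the Case~(ii) start-of-row condition. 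In other words, no such $S$ (and hence no $T'$) exists at all --- this is exactly the mechanism the paper uses. Once you replace your signature argument with this consistency argument, the proof is complete.
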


\begin{proof}
Suppose that there exists $S \in \syd(\mu;p,q)$ of shape $\mu \vdash n$
such that $\Korbit_S \subset \closure{\Korbit_{T}}$,
$\Korbit_S \subset \closure{\Korbit_{T'}}$,
and the codimension is equal to one.
Then the only possibility is that 
$\Korbit_S \subset \closure{\Korbit_{T}}$ satisfies (i) (resp.~(ii)),
and $\Korbit_S \subset \closure{\Korbit_{T'}}$ satisfies (ii) (resp.~(i))
in Lemma~\ref{lem:AIII-closure-rel-codim=1}.

Suppose the length of the first row of $ \overline{S} $ is odd.  
Since $ \overline{S} $ appears in (i) and (ii) in Lemma~\ref{lem:AIII-closure-rel-codim=1} 
at the same time, the signatures $ a, b $ in (i) and those in (ii) must coincide.  
Thus the length of the second row is also odd, which leads us to the case (i) 
in the present lemma.  
Similarly, if the length of the first row of $ \overline{S} $ is even, 
in Lemma~\ref{lem:AIII-closure-rel-codim=1}, the signatures $ a, b $ in (i) and those in (ii) must be interchanged.  
So the length of the second row is also even, which leads us to the case (ii) 
in the present lemma.  
\end{proof}

\begin{theorem}[Description of orbit graph]
\label{thm:AIII-description-of-graph}
Let $ \lambda $ be a partition of $ n $, 
and $ \syd(\lambda; p, q) $ the set of signed Young diagrams 
with signature $ (p, q) $.  
Recall the map $ \pi: \syd(\lambda;p,q) \to \RR^k$ from Equation \eqref{eq:definition.of.pi}, 
where $ k $ is the number of parts of $ \lambda $ of different length $($see Equation \eqref{eqn:lambda.expressed.with.multiplicity}$)$.  

The structure of the orbit graph 
$ \Ograph_K(\Gorbit_{\lambda}) $ is described as follows.  
The vertices are $ \{ \Korbit_T \mid T \in \syd(\lambda; p, q) \} $ and, 
for two vertices $ \Korbit_{T} $ and $ \Korbit_{T'} $, 
there is an edge if and only if 
$ \pi(T) - \pi(T') $ belongs to 
\begin{equation*}
 \{ \pm (e_r - e_{r + 1} ) \mid 1 \leq r \leq k - 1 \} \cup \{ \pm e_k \} .  
\end{equation*}
Here $ e_r $ denotes a fundamental unit vector which has $ 1 $ in the $ r $-th coordinate 
and $ 0 $ elsewhere.
\end{theorem}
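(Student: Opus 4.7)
The plan is to unpack Lemma~\ref{lem:AIII-adjacent-in-codim-1} directly and translate each of its cases into the language of the map $\pi$. Two preliminary remarks will be useful throughout. First, $\pi$ is injective on $\syd(\lambda;p,q)$, since a signed Young diagram of shape $\lambda$, taken up to permutations of rows of equal length, is completely determined by the numbers $m^+(i_r)$. Second, the parity condition \eqref{eq:parity.condition.for.pi} forces that for any two $T,T'\in\syd(\lambda;p,q)$, an equality $\pi(T)-\pi(T')=\pm e_r$ requires $i_r$ to be even, while $\pi(T)-\pi(T')=\pm(e_r-e_s)$ with $r\neq s$ requires $i_r$ and $i_s$ to have the same parity.

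For the ``only if'' direction, I would exhaust the cases of Lemma~\ref{lem:AIII-adjacent-in-codim-1}. In case~(i), $\overline{T}$ and $\overline{T'}$ consist of rows of even lengths $2u\geq 2v\geq 0$ with $u>v$; I would split according as $v=0$ or $v\geq 1$. When $v=0$, only a single row of length $2u$ remains, and the hypothesis that $\lambda$ has no rows of length in $[1,2u-1]$ forces $2u=i_k$; the swap changes $m^+(i_k)$ by $\pm 1$, yielding $\pi(T)-\pi(T')=\pm e_k$. When $v\geq 1$, the same hypothesis forces $2u$ and $2v$ to be consecutive entries in the list $i_1>\cdots>i_k$, so $i_r=2u$ and $i_{r+1}=2v$ for some $r$; the swap changes $m^+(i_r)$ and $m^+(i_{r+1})$ by opposite units, giving $\pi(T)-\pi(T')=\pm(e_r-e_{r+1})$. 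Case~(ii) is handled in parallel with odd-length rows $2u+1$ and $2v+1$; both rows are always present (since $2v+1\geq 1$), so the outcome is uniformly $\pi(T)-\pi(T')=\pm(e_r-e_{r+1})$ for consecutive odd distinct parts $i_r=2u+1$, $i_{r+1}=2v+1$.

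For the ``if'' direction I would reverse each construction. Given $\pi(T)-\pi(T')=\pm e_k$, the second preliminary remark forces $i_k$ to be even; after permuting rows of equal length, $T$ and $T'$ may be taken to agree outside a single row of length $i_k$ whose leading sign is flipped, so Lemma~\ref{lem:AIII-adjacent-in-codim-1}(i) with $v=0$ and $2u=i_k$ applies (its hypothesis on $\lambda$ is vacuous since $i_k$ is minimal). Given $\pi(T)-\pi(T')=\pm(e_r-e_{r+1})$, the same remark forces $i_r$ and $i_{r+1}$ to share parity; $T$ and $T'$ may be arranged to differ only in one row of length $i_r$ and one of length $i_{r+1}$ with opposite leading signs, and since $i_r,i_{r+1}$ are consecutive distinct parts of $\lambda$, the hypothesis that no row of intermediate length exists is automatic. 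Case~(i) or (ii) of Lemma~\ref{lem:AIII-adjacent-in-codim-1} applies according to whether $i_r,i_{r+1}$ are both even or both odd, establishing adjacency.

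The main obstacle, if any, is purely bookkeeping: tracking how swapping leading signs $\{a,b\}=\{+,-\}$ between two rows translates into increments of $m^+$, and verifying that ``$\lambda$ has no rows of length in [specified interval]'' corresponds precisely to ``the relevant even (or odd) parts are consecutive in $i_1>\cdots>i_k$''. No conceptual input beyond Lemma~\ref{lem:AIII-adjacent-in-codim-1} and the parity constraint is needed.
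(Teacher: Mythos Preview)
Your proposal is correct and follows essentially the same approach as the paper: both arguments reduce directly to Lemma~\ref{lem:AIII-adjacent-in-codim-1} and the definition of $\pi$, with the paper simply noting that the $v=0$ subcase of case~(i) yields the $\pm e_k$ edges. Your write-up is considerably more detailed---in particular, your explicit use of the parity condition~\eqref{eq:parity.condition.for.pi} to rule out the vacuous cases (e.g., $\pi(T)-\pi(T')=\pm e_k$ with $i_k$ odd) makes rigorous what the paper leaves implicit.
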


\begin{proof}
By the definition of 
$\pi: \syd(\lambda;p,q) \to \RR^k$
and Lemma~\ref{lem:AIII-adjacent-in-codim-1},
we immediately have the description of the edges.
Note that the case where $v=0$ 
in Case (i) of Lemma~\ref{lem:AIII-adjacent-in-codim-1}
corresponds to the edges $\pm e_k$.
\end{proof}

\begin{example}
\label{ex:orbit-graphs}
(1)
Consider the shape $\lambda = (6,4,4,2,2)$ 
and signature $(p,q) = (9,9)$.
The following is (the image under $\pi$ of)
the graph of $\syd(\lambda;p,q)$,
where dotted lines are just for help to see the structure.

\begin{figure}[htbp]
\includegraphics[scale=0.9]{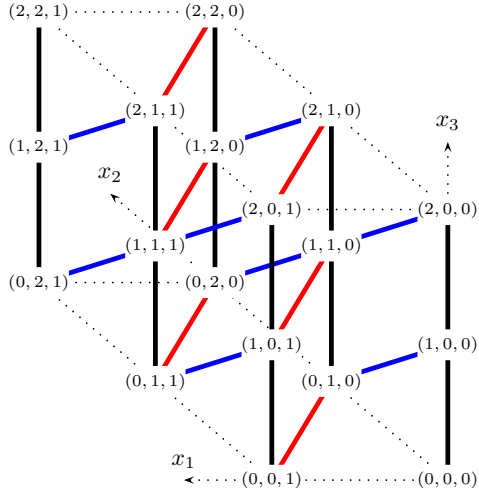}
\caption{Orbit graph for $ \lambda = (6,4,4,2,2)$.}
\label{figure:orbit.graph.64422.AIII}
\end{figure}

(2)
Consider the shape $\lambda = (4,3,3,1,1)$ 
and signature $(p,q) = (6,6)$.
Figure~\ref{figure:orbit.graph.43311.AIII} is (the image under $\pi$ of)
the graph of $\syd(\lambda;p,q)$.  
Again dotted lines are just for help to see the structure.

\noindent\hfil
\begin{figure}[htbp]
\includegraphics[scale=0.7]{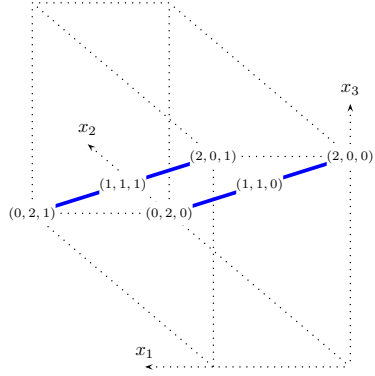}
\caption{Orbit graph for $ \lambda = (4, 3, 3, 1, 1)$.}
\label{figure:orbit.graph.43311.AIII}
\end{figure}
\end{example}

From this theorem, we can give a complete system of representatives 
of the connected components of $\Ograph_K(\Gorbit_{\lambda})$ 
in algorithmic way.  
The idea of getting such a representative is to start 
from an orbit $ \Korbit_T $ from a connected component, then 
to move rows in $ T \in \syd(\lambda; p,q) $ 
beginning with $\fboxplus$ as upper as possible within the connected component containing 
$ \Korbit_T $.  

To describe these representatives explicitly, let us introduce some notation.  

Let $ \lambda $ be a partition of $ n $ with length 
$ \ell = \ell(\lambda) $ and put $ \lambda_{\ell+1} = 0 $.  
Define $ k_0 = 0 < k_1 < k_2 < \cdots < k_m  $ by 
\begin{equation}
\label{eq:k1k2...km}
  \{ k_1, k_2, \ldots, k_m \}
  = \{ j \mid 
    1 \le j \le \ell, \; 
    \text{$\lambda_j - \lambda_{j+1}$ is odd}
    \} ,
\end{equation}
and put 
\begin{equation}
\label{eq:parity.condition}
\begin{aligned}
&
P(\lambda; p,q) =
\{ \mbfp = (p_1,\ldots,p_m) \in \ZZ_{\geq 0}^m \mid 
\text{$ \mbfp $ satisfies 
$ (\ast) $} \}
\\
&
\qquad
(\ast)
\left\{ 
\text{\parbox{0.5\textwidth}{
      $0 \le p_s \le k_s - k_{s-1} $ $(1\le s \le m)$,\\[1.5ex]
      $2 \!\! \textstyle\sum\limits_{\text{$\lambda_{k_s} : $ odd}} p_s - \#(\text{odd parts}) = p-q $.
  }}
\right.
\end{aligned}
\end{equation}
If there is no odd part in $ \lambda $, then we formally put $ m = 1 , k_1 = 0 $ and $ P(\lambda; p, q) = \{ (0) \} $, otherwise we get $ k_1 > 0 $.  
For $ \mbfp = (p_1,\ldots,p_m) \in P(\lambda; p, q) $, we construct a signed Young diagram 
$T \in \syd(\lambda; p,q)$ in such a way that 
$j$-th row begins with $\fboxplus$ if and only if 
$ k_{s -1} < j  \le k_{s - 1} + p_s $ for some $1 \le s \le m$.
Then the parity condition in $ (\ast) $ for $\sum_{\text{$\lambda_{k_s}$: odd}} p_s$ 
assures that $ T $ has indeed the desired signature $ (p, q) $ 
(see Equation \eqref{eq:parity.condition.for.pi}).  
Again, if there is no odd part in $ \lambda $, we associate $ (0) \in P(\lambda; p, q) $ 
with a signed Young diagram $ T $ in which every row starts with $\fboxminus$.  
In this case it is necessary that $ p = q = n/2 $ holds (thus $ n $ must be even in this case).

\begin{lemma}
\label{lem:complete-representatives}
With the above notation, the set 
\begin{align*}
  \left\{
    T \in \syd(\lambda; p,q) \mid 
    \text{$ T $ constructed from 
      $\mbfp \in P(\lambda; p,q)$}
  \right\},
\end{align*}
gives a complete system of representatives of connected components of the 
graph $ \Ograph_K(\Gorbit_{\lambda}) $.  
\end{lemma}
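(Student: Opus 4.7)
The plan is to identify the invariants of each connected component, show that the diagrams $T_{\mbfp}$ realize all possible invariant tuples, and then match the constraints to the definition of $P(\lambda;p,q)$.

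First, I would decompose the sequence of distinct part lengths $i_1 > \cdots > i_k$ into maximal runs of the same parity, called \emph{parity segments}. By the definition \eqref{eq:k1k2...km} of the $k_s$, these parity segments are in bijection with the lemma's blocks $[k_{s-1}+1, k_s]$ of row indices, together with---precisely when $\lambda_\ell$ is even---one additional \emph{tail} parity segment (rows $k_m+1, \ldots, \ell$, all of even length); if $\lambda$ has only even parts then there are no blocks and everything lies in the tail. For each $s \in \{1, \ldots, m\}$, let $p_s(T)$ denote the number of $\fboxplus$-starting rows in the $s$-th block, equivalently $p_s(T) = \sum_{r \in I_s} m^+(i_r)$ where $I_s$ is the corresponding parity segment of distinct-length indices. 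By Theorem~\ref{thm:AIII-description-of-graph} combined with the parity condition \eqref{eq:parity.condition.for.pi}, every edge of $\Ograph_K(\Gorbit_\lambda)$ changes $\pi(T)$ by $\pm(e_r - e_{r+1})$ with $i_r, i_{r+1}$ of the same parity (hence with $r, r+1$ in a common parity segment), or by $\pm e_k$ (admissible only when $i_k$ is even, i.e., when $k$ lies in the tail). Consequently $p_s(T)$ is preserved along every edge for each $s \in \{1, \ldots, m\}$, so $\mbfp(T) := (p_1(T), \ldots, p_m(T))$ is a well-defined invariant of the connected component of $\Korbit_T$.

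Next, I would show that the connected component of $\Korbit_T$ contains $\Korbit_{T_{\mbfp(T)}}$. Within each parity segment $I_s$ corresponding to a block, the set of admissible tuples $(m^+(i_r))_{r \in I_s}$ with fixed sum $p_s(T)$ and coordinatewise bounds $0 \le m^+(i_r) \le m(i_r)$ is connected under the moves $\pm(e_r - e_{r+1})$, by a standard transportation argument (a transfer blocked by a saturated intermediate coordinate can always be re-routed through non-saturated neighbors). In particular one can reach the ``top-loaded'' tuple in which the $\fboxplus$-starting rows occupy the longest rows of the block, which is exactly the distribution prescribed by the construction of $T_{\mbfp(T)}$. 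For the tail segment (present precisely when $\lambda_\ell$ is even), the generators $\pm(e_r - e_{r+1})$ within the tail together with $\pm e_k$ span the full integer lattice over the tail coordinates; combined with the box constraints, every $m^+(i_r)$ with $r$ in the tail can be reduced to $0$, placing $\fboxminus$ at the start of every outside-block row as in $T_{\mbfp(T)}$.

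Finally, I would verify $\mbfp(T) \in P(\lambda;p,q)$ and conclude. The bounds $0 \le p_s \le k_s - k_{s-1}$ are immediate from the definition of $p_s(T)$. For the congruence $(\ast)$ of \eqref{eq:parity.condition}, observe that each odd-length row contributes $+1$ or $-1$ to $p - q$ according to its starting sign, while even-length rows contribute $0$; since the odd-length rows lie exactly in the blocks with $\lambda_{k_s}$ odd, the top-loaded representative yields $p - q = 2\sum_{\lambda_{k_s}\text{ odd}} p_s - \#(\text{odd parts})$, which is $(\ast)$, so $\mbfp(T) \in P(\lambda;p,q)$. Uniqueness is then immediate from the invariance of $\mbfp$: distinct $\mbfp \ne \mbfp'$ give different invariants and hence inequivalent components. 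The main obstacle in this plan is the reachability argument in the second step: one must check that admissible moves inside each parity segment can always be sequenced to realize top-loading (respectively, zero-loading for the tail) without leaving the box $\prod_r [0, m(i_r)]$. Although this is standard for transportation polytopes, it requires a careful greedy routing when several intermediate coordinates are simultaneously saturated.
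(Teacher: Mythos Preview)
Your argument is correct. The paper's own proof is a one-line appeal to Theorem~\ref{thm:AIII-description-of-graph} together with the remark that the $T_{\mbfp}$ are the maxima for a certain total order on $\syd(\lambda;p,q)$ determined by the first column; you instead make the component invariant $\mbfp(T)=(p_1(T),\ldots,p_m(T))$ explicit and supply the within-segment connectivity argument that the paper leaves to the reader. Both rest on the same observation---that the admissible edges $\pm(e_r-e_{r+1})$ force $i_r,i_{r+1}$ to have equal parity and $\pm e_k$ forces $i_k$ even, so each $p_s(T)$ is constant on components---and the transportation step you flag as an obstacle is routine: inducting on the number of coordinates in a segment, one brings the leading coordinate to its target by routing mass through the nearest non-zero (respectively non-saturated) coordinate to its right, which always exists since every $m(i_r)\ge1$.
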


\begin{proof}
This lemma follows easily from Theorem~\ref{thm:AIII-description-of-graph}.
More precisely,
this complete system corresponds to the greatest signed Young diagrams 
with respect to the total order defined by
\begin{align*}
T_1 \ge T_2 \Leftrightarrow
\begin{cases}
(1) & \text{the number of $\fboxplus$ in $r_1(T_1)$ $<$ that in $r_1(T_2)$},
\\
(2) & \text{or the number of $\fboxplus$ in $r_1(T_1)$ $=$ that in $r_1(T_2)$},
\\
&\text{and $r_1(T_1) \ge_{\text{lex}} r_1(T_2)$},
\end{cases}
\end{align*}
where $r_1(T)$ denotes the first column of $T$,
and $\ge_{\text{lex}}$ denotes the lexicographic order 
with $\fboxplus$ $>$ $\fboxminus$.
\end{proof}

\subsection{Product of graph}
\label{subsec:AIII-product-of-graph}
The orbit graph $\Ograph_K(\Gorbit_\lambda)$  
associated to the set of signed Young diagrams $\syd(\lambda;p,q)$ is presented as 
a disjoint union of products of 
basic building blocks.
There are two kinds of the basic building blocks
$A(\mbfm; \rho)$ and $C(\mbfm)$ defined below.
Take a partition $\lambda$ of $n=p+q$,
and write
$\lambda = (i_1^{m(i_1)}, i_2^{m(i_2)}, \ldots, i_k^{m(i_k)})$
using multiplicities 
(see Equation \eqref{eqn:lambda.expressed.with.multiplicity}).

Let us use the notation in~(\ref{eq:k1k2...km}) 
and Lemma~\ref{lem:complete-representatives}.  
For $ 1 \leq s \leq m $, we put 
$ r_s $ to be the number of different parts of $ \lambda $ between 
the first row and the $ k_s $-th row (we count the $ k_s $-th row also).  
Then we have an increasing sequence $r_1 < r_2 < \cdots < r_m \leq k $.  
Recall that $ k $ is the number of different parts of $ \lambda $.  
Here $ r_m = k $ holds if the last part of $ \lambda $ is odd.  
If the last part of $ \lambda $ is even, 
$i_{r_{m}+1}, i_{r_{m}+2}, \ldots, i_k $
are different even row lengths at the tail of $\lambda$.
See Example~\ref{example:product.of.graph}, where these numbers $ r_s $'s as well as $ k_s $'s are given for several $ \lambda $'s.

For a collection of non-negative integers $ \mbfm = (m_1, m_2, \dots, m_{\ell}) $ and $\rho$, 
we define connected graphs
$A(\mbfm; \rho)$ and $C(\mbfm)$ 
as follows.
The vertices of $A(\mbfm; \rho)$  are given by
\begin{gather*}
  \left\{
    (a_1, a_2, \ldots, a_{\ell}) \in \ZZ^{\ell}
    \Bigm|
    \begin{array}{l}
      0 \le a_s \le m_s \; (1 \le s \le {\ell}), \\
      a_1 + a_2 + \cdots + a_{\ell} = \rho
    \end{array}
  \right\},
\end{gather*}
and the edge between
$(a_1, a_2, \ldots, a_{\ell})$ and $(b_1, b_2, \ldots, b_{\ell})$
exists if and only if
\begin{gather*}
(a_1, a_2, \ldots, a_{\ell}) - (b_1, b_2, \ldots, b_{\ell}) = \pm (e_s - e_{s+1})
\end{gather*}
for some $s = 1,2,\ldots,{\ell}-1$.
The vertices of $C(\mbfm)$ is 
\begin{gather*}
\{ (a_1, a_2, \ldots, a_{\ell}) \in \ZZ^{\ell} \mid
  0 \le a_s \le m_s \; (1 \le s \le {\ell}) \},
\end{gather*}
and the edge between
$(a_1, a_2, \ldots, a_{\ell})$ and $(b_1, b_2, \ldots, b_{\ell})$
exists if and only if
\begin{align*}
(a_1, a_2, \ldots, a_{\ell}) &- (b_1, b_2, \ldots, b_{\ell}) \\
&= 
\begin{cases}
  \pm (e_s - e_{s+1}) \; (s = 1,2,\ldots,{\ell}-1), \; \text{or} \\
  \pm e_{\ell}.
\end{cases}
\end{align*}
If the parameter is empty, we set $C(\emptyset)$ 
to be the graph of a single point with no edge.
For example, $A(2,1;1)$ and $C(1,2)$ are as follows:

\hfil
\parbox[b]{.3\textwidth}{\includegraphics{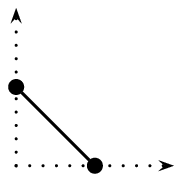}\\[1ex] $ A(2,1;1) $}
\hfil
\parbox[b]{.3\textwidth}{\includegraphics{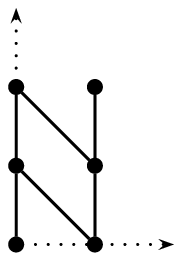}\\[1ex] $ C(1,2) $}



\begin{theorem}
\label{thm:product-of-graph}
Under the above notation, the orbit graph $\Ograph_K(\Gorbit_\lambda)$
for a partition $\lambda$ of $n=p+q$ can be presented 
as a disjoint union of direct products of simple connected graphs as
\begin{gather*}
\Ograph_K(\Gorbit_\lambda) \simeq
\coprod\nolimits_{\mbfp \in P(\lambda;p,q)} Z_{\mbfp}, \\
\intertext{where, if $ r_m < k $, the product $ Z_{\mbfp} $ is defined by}
\begin{aligned}
Z_{\mbfp} = 
A(m(i_1), & m(i_2),\ldots,m(i_{r_1}); p_1) \\
&\times A(m(i_{r_1+1}),m(i_{r_1+2}),\ldots,m(i_{r_2}); p_2) \times \cdots \\
&\times A(m(i_{r_{m-1}+1}),m(i_{r_{m-1}+2}),\ldots,m(i_{r_m}); p_m) \\
&\times C(m(i_{r_{m}+1}),m(i_{r_{m}+2}),\ldots,m(i_{k})), 
\end{aligned}
\intertext{and, if $ r_m = k $, }
\begin{aligned}
Z_{\mbfp} = 
A(m(i_1), & m(i_2),\ldots,m(i_{r_1}); p_1) \\
&\times A(m(i_{r_1+1}),m(i_{r_1+2}),\ldots,m(i_{r_2}); p_2) \times \cdots \\
&\times A(m(i_{r_{m-1}+1}),m(i_{r_{m-1}+2}),\ldots,m(i_{r_m}); p_m) .
\end{aligned}
\end{gather*}
\end{theorem}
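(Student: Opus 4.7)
The plan is to use the embedding $\pi: \syd(\lambda;p,q) \hookrightarrow \ZZ^k$ from \eqref{eq:definition.of.pi} and show, via Theorem~\ref{thm:AIII-description-of-graph}, that every edge of $\Ograph_K(\Gorbit_\lambda)$ acts by translation within a single ``block'' of coordinates determined by $r_1 < \cdots < r_m$; this will immediately deliver the product decomposition.

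First I would observe that $\pi$ identifies $\syd(\lambda;p,q)$ with the lattice points $(a_1,\ldots,a_k)\in\ZZ^k$ satisfying $0 \le a_r \le m(i_r)$ together with the parity condition \eqref{eq:parity.condition.for.pi}, and by Theorem~\ref{thm:AIII-description-of-graph} an edge is a displacement $\pm(e_r - e_{r+1})$ or $\pm e_k$ between two such points. A displacement $\pm(e_r - e_{r+1})$ respects the parity condition precisely when $i_r \equiv i_{r+1} \pmod{2}$, which by the definition of the $r_s$ amounts to $r \notin \{r_1,\ldots,r_m\}$; the displacement $\pm e_k$ respects it precisely when $i_k$ is even, that is, when $r_m < k$. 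Consequently the admissible edges split according to the blocks of coordinate-indices $[r_{s-1}+1,r_s]$ for $1 \le s \le m$ (with $r_0=0$), plus the tail block $[r_m+1,k]$ when $r_m<k$, and along any path in $\Ograph_K(\Gorbit_\lambda)$ the block sums $\sigma_s(T) := a_{r_{s-1}+1} + \cdots + a_{r_s}$ for $1 \le s \le m$ are preserved.

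Next I would match $(\sigma_1,\ldots,\sigma_m)$ with $P(\lambda;p,q)$. Because all rows in block $s$ share the parity of $\lambda_{k_s}$, the quantity $\sigma_s(T)$ is exactly the number $p_s$ of rows at positions $k_{s-1}+1,\ldots,k_s$ starting with $\fboxplus$, and the image-of-$\pi$ constraints translate coordinate-wise into the condition $(\ast)$ of \eqref{eq:parity.condition}, so $\mbfp = (\sigma_1,\ldots,\sigma_m)$ runs over $P(\lambda;p,q)$. Fixing such a $\mbfp$, the vertex set with block sums $\mbfp$ is a Cartesian product: for $s \le m$ the $s$-th factor is the vertex set of $A(m(i_{r_{s-1}+1}),\ldots,m(i_{r_s}); p_s)$, and (if $r_m<k$) the tail factor is the vertex set of $C(m(i_{r_m+1}),\ldots,m(i_k))$. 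Since admissible edges act on only one block at a time, the induced subgraph coincides with the direct product $Z_\mbfp$.

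It remains to verify that each $Z_\mbfp$ is in fact connected, which reduces to showing that the factors $A(\mbfm;\rho)$ and $C(\mbfm)$ are connected. For $A(\mbfm;\rho)$ I would induct on $\ell$: given two vertices $(a_s)$ and $(b_s)$ with common sum $\rho$, use moves $\pm(e_j-e_{j+1})$ to push mass one step at a time until the leftmost coordinate agrees, then iterate. For $C(\mbfm)$, the extra edge $\pm e_k$ makes every coordinate adjustable independently via the telescoping identity $e_j = (e_j-e_{j+1}) + \cdots + (e_{k-1}-e_k) + e_k$, so any two vertices are connected. The main (though modest) obstacle is the careful bookkeeping that matches the parity condition in \eqref{eq:parity.condition.for.pi} with $(\ast)$ in \eqref{eq:parity.condition} and handles the cases $r_m=k$ versus $r_m<k$ uniformly, together with the boundary conventions when $\lambda$ has no odd parts. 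Once these are dispensed with, $\coprod_{\mbfp \in P(\lambda;p,q)} Z_\mbfp$ is precisely the decomposition of $\Ograph_K(\Gorbit_\lambda)$ into its connected components.
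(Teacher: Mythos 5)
Your proposal is correct and follows essentially the same approach as the paper's proof: realize the vertex set via $\pi$ as lattice points subject to the box and parity constraints, observe that the admissible edge displacements $\pm(e_r-e_{r+1})$ (and $\pm e_k$) only occur when $i_r\equiv i_{r+1}\pmod 2$ (respectively $i_k$ even), so the block sums $\sigma_s$ are conserved, and hence the graph decomposes as the stated disjoint union of products. The one place you go beyond the paper's written proof is in sketching the connectedness of $A(\mbfm;\rho)$ and $C(\mbfm)$ (the paper simply asserts this in the definition of those graphs); your telescoping argument for $C$ is fine, and the mass-shuffling induction for $A$ works though it needs a touch of care when intermediate coordinates are at capacity — one clean way is to argue every vertex other than the lexicographically largest one has a lex-larger neighbor.
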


\begin{proof}
The set of vertices of the orbit graph $\Ograph_K(\Gorbit_\lambda)$ is 
in one-to-one correspondence with the set of signed Young diagrams $\syd(\lambda;p,q)$,
and, if $ i_m $ is strictly smaller than $ k $, its image under the map $\pi:\syd(\lambda;p,q) \to \RR^k$ is 
{\small
\begin{align}
\label{eq:decomposition-of-vertices-of-graph}
\begin{split}
  & \left\{
    (a_1, a_2, \ldots, a_k) \in \ZZ^k
    \Bigm|
    \begin{array}{l}
      0 \le a_s \le m(i_s) \; (1 \le s \le k), \\
      2\textstyle\sum\limits_{\text{$i_s$: odd}} a_s - \#(\text{odd parts}) = p-q
    \end{array}
  \right\}
  \\ &=
  \coprod_{\mbfp \in P(\lambda;p,q)}
  \left\{
    (a_1, a_2, \ldots, a_k) \in \ZZ^k
    \Big|
    \begin{array}{l}
      0 \le a_s \le m(i_s) \; (1 \le s \le k), \\
      a_{r_{t-1}+1} + \cdots + a_{r_t} = p_t \\
      \hfill (1 \le t \le m)
    \end{array}
  \right\}
  \\ &\simeq
  \coprod_{\mbfp \in P(\lambda;p,q)}
  \prod_{t=1}^m
  V\bigl(A(m(i_{r_{t-1}+1}), \ldots, m(i_{r_t}); p_t)\bigr) 
  \\ & \hspace*{.4\textwidth}
\times
  V\bigl(C(m(i_{r_{m}+1}), \ldots, m(i_{k}))\bigr), 
\end{split}
\end{align}
}where we put $r_0 = 0$, and $ V(\Gamma) $ denotes the set of vertices of a graph $ \Gamma $.
If $ r_m = k $, then the last term in the last equality will not appear.

Since the edges of $\Ograph_K(\Gorbit_\lambda)$ are of the form
$\pm(e_s - e_{s+1})$ or $\pm e_k$ ($s=1,2,\ldots,k-1$ and $i_s-i_{s+1}$ is even),
every edge sits inside some factor of the right-hand side of
(\ref{eq:decomposition-of-vertices-of-graph}).
Therefore (\ref{eq:decomposition-of-vertices-of-graph})
turns out to be a disjoint union of direct products
not only as sets but also as graphs.
\end{proof}

\begin{example}
\label{example:product.of.graph}
(1)
Let $\lambda=(6,4,4,2,2) = (6, 4^2, 2^2)$ be a partition of $ 18 $ and $(p,q)=(9,9)$.
\begin{align*}
&
(i_1, i_2, i_3) = (6, 4, 2), \quad (m(i_1),m(i_2),m(i_3))=(1,2,2), \quad k = 3, \\
&
(k_0, k_1) = (0, 0), \quad m = 1, \quad r_1 = 0, \quad\\
&
P(\lambda; p, q) = \{ (0) \} .
\end{align*}
Thus $\Ograph_K(\Gorbit_\lambda) \simeq C(1,2,2)$
as given in Example~\ref{ex:orbit-graphs}~(1).

(2)
Let $\lambda=(4,3,3,1,1)=(4, 3^2, 1^2) $ 
and $(p,q)=(6,6)$.
\begin{align*}
&
(i_1, i_2, i_3) = (4, 3, 1), \quad (m(i_1),m(i_2),m(i_3))=(1,2,2), \quad k = 3, \\
&
(k_0, k_1, k_2) = (0, 1, 5), \quad m = 2, \quad (r_1, r_2) = (1, 3), \\
&
P(\lambda; p, q) = \{ (0, 2), (1, 2) \} .
\end{align*}
So we have
\begin{align*}
\Ograph_K(\Gorbit_\lambda) &\simeq 
\coprod_{(p_1, p_2)}
A(1;p_1) \times A(2,2;p_2)
\\ & =
A(1;0) \times A(2,2;2) \amalg A(1;1) \times A(2,2;2)
\\
& \simeq \raisebox{-2.5ex}{\includegraphics{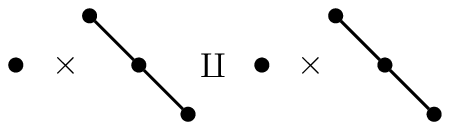}}
\\[1ex] 
& \simeq \raisebox{-2.5ex}{\includegraphics{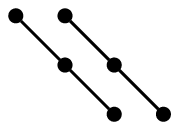}}
\end{align*}
as given in Example~\ref{ex:orbit-graphs}~(2).

(3)
Let $\lambda=(9,9,8,8,6,5,4,2,2)=(9^2, 8^2, 6, 5, 4, 2^2) $ and $(p,q)=(27,26)$.
\begin{align*}
&
(i_1, i_2, \dots, i_6) = (9, 8, 6, 5, 4, 2), \quad \\
&
(m(i_1),m(i_2),\dots,m(i_6))=(2,2,1,1,1,2), \quad k = 6, \\
&
(k_0, k_1, k_2, k_3) = (0, 2, 5, 6), \quad m = 3, \quad (r_1, r_2, r_3) = (1, 3, 4), \\
&
P(\lambda; p, q) = \{ (p_1, p_2, p_3) \mid p_1 \in [0, 2], p_2 \in [0, 3], p_3 \in [0, 1], p_1 + p_3 = 2 \} .
\end{align*}
Notice that the parity condition for $ \mbfp \in P(\lambda; p, q) $ reads as 
$ 2 (p_1 + p_3) - 3 = 27 - 26 $, so we get $ p_1 + p_3 = 2 $.  
Thus we have
\begin{align*}
&
\Ograph_K(\Gorbit_\lambda) \\
&= 
\coprod_{(p_1, p_2, p_3)}
A(2;p_1) \times A(2,1;p_2) \times A(1;p_3) \times C(1,2)
\\ & \simeq
\Bigl(A(2;1) \times A(1;1) \amalg A(2;2) \times A(1;0)\Bigr) \! \times \!\!
\textstyle\coprod\limits_{p_2=0}^3 A(2,1;p_2) \times C(1,2)
\\ 
& \simeq \raisebox{-2.5ex}{\includegraphics{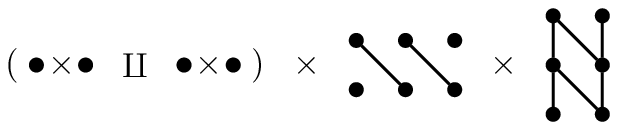}}
\\[1ex] 
& \simeq \raisebox{-2.5ex}{\includegraphics{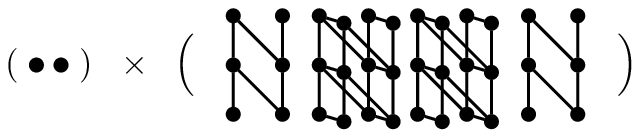}}
\\[1ex] 
& \simeq \raisebox{-2.5ex}{\includegraphics{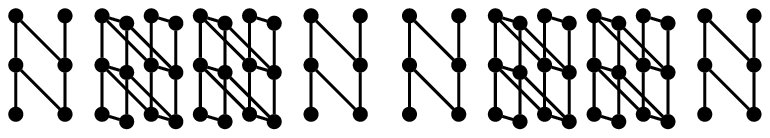}}
\\
\end{align*}
\end{example}

\subsection{Induction of subgraphs}
\label{subsec:AIII-induction.of.subgraph}

Let us consider the following operation on the partitions.  
We identify the partitions with Young diagrams in standard way.  
Given a Young diagram (or a partition) $ \lambda $, 
we remove two successive \emph{columns} of the same length from $ \lambda $ (if they exist), 
and we get $ \lambda' $.  
To explain this operation in another way, let us consider the transposed partition $ \mu = \transpose{\lambda} $.  
If $ \mu = ( \mu_1, \dots , \mu_{\ell'} ) $ has a pair of repeated parts, i.e., 
if $ \mu = ( \mu_1, \dots , \mu_i, \mu_{i + 1} , \dots , \mu_{\ell'} ) $ with $ \mu_i = \mu_{i + 1} $, 
we remove that pair, and then take the transpose again.  So we get 
\begin{equation}
\label{eq:elimination.of.columns}
\lambda' = \transpose{
  ( \mu_1, \dots , \hat{\mu_i}, \hat{\mu_{i + 1}} , \dots , \mu_{\ell'} ) }, 
\end{equation}
where $ \hat{\cdot} $ means elimination.  

\begin{lemma}
\label{lem:number-of-connected-components-coincide}
Let $\lambda$ and $\lambda'$ be as above,
and $h$ the height of the columns removed from $\lambda$.
Then the number of connected components of $\Ograph_K(\Gorbit_{\lambda})$
coincides with that of $\Ograph_{K'}(\Gporbit_{\lambda'})$,
where $(G,K) = (GL_n, GL_p \times GL_q)$, 
and $(G',K') = (GL_{n-2h}, GL_{p'} \times GL_{q'}) $ 
with 
$ p' = p - h $ and $ q' = q - h $.

Note that if $n = 2h$, then $\lambda'$ is the empty Young diagram,
and  $\Ograph_{K'}(\Gporbit_{\lambda'})$ should be considered as the one-point graph 
(with no edges) 
whose vertex is parameterized by the empty signed Young diagram.
\end{lemma}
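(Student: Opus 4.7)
The plan is to reduce the claim to an equality of cardinalities $|P(\lambda;p,q)| = |P(\lambda';p',q')|$ of the index sets from~\eqref{eq:parity.condition}. By Lemma~\ref{lem:complete-representatives} (whose content is also visible in Theorem~\ref{thm:product-of-graph}), the connected components of $\Ograph_K(\Gorbit_\lambda)$ are in bijection with $P(\lambda;p,q)$, and the analogous statement holds on the primed side, so it is enough to show that the two defining sets of constraints coincide.

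First I would translate the column-removal operation at the level of row lengths. If $\mu = \transpose{\lambda}$ and $\mu_i = \mu_{i+1} = h$ are the two columns being removed, then $\lambda'_j = \lambda_j - 2$ for $j \le h$ and $\lambda'_j = \lambda_j$ for $j > h$. From this formula I would read off three invariants of the passage $\lambda \rightsquigarrow \lambda'$: (i) the parity of each nonvanishing part is preserved; (ii) the parity of each consecutive difference $\lambda_j - \lambda_{j+1}$ is preserved, the only nontrivial case being $j = h$, where the difference merely changes by $\pm 2$; and (iii) the total number of odd parts is unchanged, since any part that vanishes is necessarily equal to $2$, which is even. Combined with the trivial identity $p' - q' = (p-h) - (q-h) = p - q$, these observations imply that the sequence $(k_s)$ from~\eqref{eq:k1k2...km}, the bounds $k_s - k_{s-1}$, the subset $\{s : \lambda_{k_s} \text{ is odd}\}$, and the right-hand side of the parity condition $(\ast)$ all agree for $\lambda$ and $\lambda'$. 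Hence $P(\lambda;p,q) = P(\lambda';p',q')$ as subsets of $\ZZ_{\geq 0}^m$, and the lemma follows.

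The one point requiring care, and which I expect to be the only real obstacle, is the degenerate regime where some rows of $\lambda$ actually vanish in $\lambda'$. A short inspection shows that this can happen only when $i = 1$ (we are stripping off the two leftmost columns), in which case the disappearing rows all have length exactly $2$. In that regime I would verify directly that the vanishing zone contributes no index $k_s$, since the consecutive differences between successive $2$'s, and between the final $2$ and the trailing $0$, are all even. With this check in hand, the invariance of the combinatorial data in the previous paragraph goes through unchanged, and the extreme case $n = 2h$ (so $\lambda' = \emptyset$) simply reduces to the stated convention $|P(\emptyset)| = 1$.
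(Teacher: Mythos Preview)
Your proposal is correct and follows exactly the same approach as the paper: both argue that the defining data of $P(\lambda;p,q)$ --- the sequence $(k_s)$, the parities of the $\lambda_{k_s}$, the number of odd parts, and the quantity $p-q$ --- are unchanged under the column-removal operation, whence $P(\lambda;p,q) = P(\lambda';p',q')$ and the component counts agree via Lemma~\ref{lem:complete-representatives}. Your version is in fact more thorough than the paper's, since you spell out the row-length formula $\lambda'_j = \lambda_j - 2\cdot[j\le h]$ and explicitly handle the degenerate case where rows of length $2$ vanish.
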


\begin{proof}
The number $ 0 < k_1 < k_2 < \cdots < k_m$ for $\lambda$ given in Equation \eqref{eq:k1k2...km} 
are the same as those for $\lambda'$,
since the parities of the row lengths are the same
for $\lambda$ and $\lambda'$.
By the same reason the number of the odd parts is the same 
for $\lambda$ and $\lambda'$.
Together with $p-q = (p-h)-(q-h) = p' - q'$,
it turns out that the set $P(\lambda'; p', q')$,
which parameterizes the connected components of $\Ograph_{K'}(\Gporbit_{\lambda'})$ 
is equal to $P(\lambda; p,q)$.
Hence the number of the connected components of $\Ograph_K(\Gorbit_{\lambda})$ 
coincides with that of $\Ograph_{K'}(\Gporbit_{\lambda'})$. 
\end{proof}

Let us refine the lemma above, which helps us to understand the connected components more concretely.  
Actually, we describe the connected components of $ \Ograph_K(\Gorbit_{\lambda}) $ 
in terms of those of $ \Ograph_{K'}(\Gporbit_{\lambda'}) $.  
To do so, we need some notation.

Let $ Z' $ be a full subgraph of $ \Ograph_{K'}(\Gporbit_{\lambda'}) $.  
For each vertex $ \Kporbit_{T'} $ in $ Z' $, 
we construct several nilpotent $ K $-orbits $ \{ \Korbit_T \}_T $ as follows.  
Since $ \lambda' $ is contained in $ \lambda $ (as a Young diagram, in the left and upper justified manner), 
we can put the signed Young diagram $ T' $ inside the shape $ \lambda $.  
In other words, we fill $ \pm $'s in $ \lambda' \subset \lambda $ in such a way that it recovers $ T' $.  
If $ T' $ has several rows of the same length, we allow every possible permutations of such rows.  
After that, we fill $ \pm $'s in $ \lambda/\lambda' $ in every possible way, 
which is compatible with $ T' $.  

\begin{example}
(1)\ 
Let us consider the case where $ (p,q) = (8,7) $, $ (p', q') = (3,2) $, and 
$$ \lambda' = (2^2, 1) \subset \lambda = (4^2, 3, 2^2) . $$
Pick 
$ T' \in \syd(\lambda'; 3,2) $ below, and we get a set of signed Young diagrams in $ \syd(\lambda; 8,7) $ 
as follows.
\begin{gather*}
T' = \text{\tiny$\young(+-,-+,+)$} \in \syd(\lambda'; 3,2) \rightsquigarrow \{ T \in \syd(\lambda; 8,7) \} 
\\[2ex]
T = 
\makebox[0pt][l]{\raisebox{-.5ex}{\text{\Yboxdim10pt\Ylinethick1pt\yng(2,2,1)}}}
\text{\tiny$\young(+-+-,-+-+,+-+,+-,+-)$} , \quad
\makebox[0pt][l]{\raisebox{-.5ex}{\text{\Yboxdim10pt\Ylinethick1pt\yng(2,2,1)}}}
\text{\tiny$\young(+-+-,-+-+,+-+,+-,-+)$} , \quad
\makebox[0pt][l]{\raisebox{-.5ex}{\text{\Yboxdim10pt\Ylinethick1pt\yng(2,2,1)}}}
\text{\tiny$\young(+-+-,-+-+,+-+,-+,-+)$} 
\end{gather*}
(2)\ 
Similarly we give an example where $ (p,q) = (7,5) $, $ (p', q') = (4,2) $, and 
$ \lambda' = (2, 1^4) \subset \lambda = (4, 3^2, 1^2) $.  
Let us consider 
$ T' \in \syd(\lambda'; 4,2) $ below.  
Note that we can reorder the tail of $ T' $ as we like.  
\begin{equation*}
T' = \text{\tiny$\young(+-,+,+,+,-)$} = \text{\tiny$\young(+-,+,-,+,+)$} 
\end{equation*}
Then we obtain 
$ \{ T \in \syd(\lambda; 7,5) \} $ from $ T' $ as follows.
\begin{equation*}
T = 
\makebox[0pt][l]{\raisebox{-3.85ex}{\text{\Yboxdim10pt\Ylinethick1pt\yng(2,1,1,1,1)}}}
\text{\tiny$\young(+-+-,+-+,+-+,+,-)$} , \quad
\makebox[0pt][l]{\raisebox{-3.85ex}{\text{\Yboxdim10pt\Ylinethick1pt\yng(2,1,1,1,1)}}}
\text{\tiny$\young(+-+-,+-+,-+-,+,+)$} 
\end{equation*}
\end{example}

We get several signed Young diagrams of the shape $ \lambda $ in this way.  
We repeat this procedure for each vertex $ \Kporbit_{T'} $ of $ Z' $.  
Collecting all the signed Young diagrams thus obtained from $ Z' $, 
we finally get a subset 
$ \ind(Z') \subset \syd(\lambda; p, q) $ or a subset of nilpotent $ K $-orbits contained in $ \Gorbit_{\lambda} \cap \lie{s} $.  
(Since $ Z' $ is a graph, we should write $ \ind(V(Z')) $ instead of $ \ind(Z') $, but we prefer this simpler notation.)  
We denote a full subgraph of $ \Ograph_K(\Gorbit_{\lambda}) $ with the vertices in 
$ \ind(Z') $ by $ \gind_{(G',K')}^{(G, K)}(Z') $ or simply by 
$ \gind(Z') $.

\begin{lemma}
\label{lemma:gind.of.connected.components}
Let $ \lambda $ and $ \lambda' $ be as above and we use the notation in Lemma 
\ref{lem:number-of-connected-components-coincide}.  
If $ Z' $ is a connected component of $ \Ograph_{K'}(\Gporbit_{\lambda'}) $, 
then $ \gind(Z') $ is a connected component of $ \Ograph_K(\Gorbit_{\lambda}) $.  
This correspondence establishes a bijection between the connected components of 
$ \Ograph_{K'}(\Gporbit_{\lambda'}) $ and those of $ \Ograph_K(\Gorbit_{\lambda}) $.
\end{lemma}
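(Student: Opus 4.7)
The plan is to prove the lemma by matching up the labelings of connected components of the two orbit graphs provided by Theorem~\ref{thm:product-of-graph} and checking that $\gind$ respects those labelings. By that theorem, the connected components of $\Ograph_K(\Gorbit_\lambda)$ and $\Ograph_{K'}(\Gporbit_{\lambda'})$ are indexed by $P(\lambda;p,q)$ and $P(\lambda';p',q')$ respectively, and the proof of Lemma~\ref{lem:number-of-connected-components-coincide} already identifies these two parameter sets canonically. Write $Z_\mbfp$ and $Z'_\mbfp$ for the components corresponding to $\mbfp \in P(\lambda;p,q) = P(\lambda';p',q')$; then the lemma is equivalent to the identity $\gind(Z'_\mbfp) = Z_\mbfp$ as full subgraphs for every $\mbfp$.

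First I would verify the vertex inclusion $V(\gind(Z'_\mbfp)) \subseteq V(Z_\mbfp)$. The key combinatorial observation is that any row of $\lambda$ that fails to appear in $\lambda'$ must lie in the trailing part of $\lambda$ beyond position $k_m$. Indeed, a row can disappear only when its length is $2$ and its position is at most $h$; since rows of length $2$ cannot otherwise be among the first $h$ longest rows, this forces the removed columns to be columns $1$ and $2$, hence $\mu_1 = \mu_2 = h$, meaning every row of $\lambda$ has length at least $2$. The length-$2$ rows, being the shortest, occupy a tail of $\lambda$ consisting of even-length rows and therefore lie entirely beyond the last parity change $k_m$. Consequently, for each $s = 1,\dots,m$, the positions $k_{s-1}+1,\dots,k_s$ persist from $\lambda$ to $\lambda'$, and since appending two boxes on the right to an alternating row preserves the starting sign, the block contribution $p_s(T)$ equals $p_s(T')$ for every $T \in \gind(\{T'\})$. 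Hence $\mbfp(T) = \mbfp$ throughout $\gind(Z'_\mbfp)$.

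Next I would prove the reverse inclusion $V(Z_\mbfp) \subseteq V(\gind(Z'_\mbfp))$. Given $T \in Z_\mbfp$, define $T'$ of shape $\lambda'$ by removing the last two boxes from each row of $T$ of length at least $i+1$ and discarding any rows that shrink to length $0$; this operation depends only on the equivalence class of $T$ since it only records, for each row length, how many rows start with $+$. A direct count shows the resulting signed Young diagram has signature $(p-h,q-h) = (p',q')$, and the starting-sign argument from the previous step gives $p_s(T') = p_s(T) = p_s$ for $1 \le s \le m$, so $T' \in Z'_\mbfp$. By construction, $T$ arises from $T'$ by appending two alternating boxes to each surviving row and, if some length-$2$ rows were dropped, re-inserting them with appropriate starting signs; thus $T \in \gind(\{T'\})$. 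Combining the two inclusions yields $\gind(Z'_\mbfp) = Z_\mbfp$ as full subgraphs, and in particular $\gind(Z'_\mbfp)$ is a connected component of $\Ograph_K(\Gorbit_\lambda)$. Since both $\mbfp \mapsto Z_\mbfp$ and $\mbfp \mapsto Z'_\mbfp$ are bijective parameterizations of the connected components, the assignment $Z' \mapsto \gind(Z')$ is the claimed bijection.

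The main obstacle is the trailing-block claim in the first inclusion: one must verify carefully that no row capable of disappearing under the transition from $\lambda$ to $\lambda'$ contributes to any $p_s$ with $1 \le s \le m$. Once this is established, the preservation of $\mbfp$ reduces to a block-by-block count of $+$-starting rows, and everything else is a formal consequence of the product decomposition in Theorem~\ref{thm:product-of-graph}.
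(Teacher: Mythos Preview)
Your approach is correct but genuinely different from the paper's. The paper proves the lemma locally: it first shows by a three-case analysis (depending on the position of the removed columns) that each fibre $\gind(\{T'\})$ is a connected segment in $\RR^k$, and then shows that whenever $T'$ and $T''$ are adjacent in $\Ograph_{K'}(\Gporbit_{\lambda'})$ one can choose adjacent $T_1 \in \gind(\{T'\})$ and $T_2 \in \gind(\{T''\})$. Connectedness of $\gind(Z')$ follows, and bijectivity is then deduced from the equality of counts in Lemma~\ref{lem:number-of-connected-components-coincide}.

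Your argument is global: you invoke the product decomposition of Theorem~\ref{thm:product-of-graph}, read off the component label $\mbfp$ of each vertex as the vector of block sums $p_s = \#\{\text{$+$-starting rows in positions } k_{s-1}+1,\dots,k_s\}$, and show this label is preserved under $\gind$ and its inverse. The trailing-block observation is the heart of it, and it is correct: rows can vanish only when $i=1$, forcing all rows to have length $\ge 2$ and the disappearing length-$2$ rows to sit past $k_m$. One point you leave implicit is that permuting equal-length rows of $T'$ cannot move a row across a block boundary $k_s$; this holds because $\lambda'_{k_s} - \lambda'_{k_s+1}$ has the same parity as $\lambda_{k_s} - \lambda_{k_s+1}$, hence is odd and in particular nonzero. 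With that check your argument is complete. Your route is cleaner once Theorem~\ref{thm:product-of-graph} is in hand; the paper's route is more self-contained and yields as a byproduct the explicit description of each fibre $\gind(\{T'\})$ as a segment, which is not visible from your argument.
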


\begin{proof} 
Note that any $ T \in \syd(\lambda; p, q) $ is contained in $ \ind(\{T'\}) $ for 
some $ T' \in \syd(\lambda'; p', q') $.  
Also, for two signed Young diagrams $ T' \neq T'' \in \syd(\lambda'; p', q') $, 
it is immediate to see that $ \ind(\{T'\}) \cap \ind(\{T''\}) = \emptyset $.  
Thus it is sufficient to prove that 
$ \gind(Z') \subset \Ograph_K(\Gorbit_{\lambda}) $ is connected.  
In fact, if we can prove that $ \gind(Z') $ is connected, 
we have a well-defined surjective map from the connected components of $ \Ograph_{K'}(\Gporbit_{\lambda}) $ 
to those of $ \Ograph_K(\Gorbit_{\lambda}) $.  
Since the number of connected components are equal by Lemma \ref{lem:number-of-connected-components-coincide}, 
this map must be bijective.  
By the arguments above, $ \ind(Z') $ covers all the vertices of $ \Ograph_K(\Gorbit_{\lambda}) $ when 
$ Z' $ moves connected components of $ \Ograph_{K'}(\Gporbit_{\lambda}) $.  
This means that $ \gind(Z') $ must be a connected component.

So let us prove that $ \gind(Z') $ is connected.

Take $ T' \in \syd(\lambda'; p', q') $, where $ p' = p - h $ and $ q' = q - h $.  
First, we will prove that $ \gind(\{T'\}) $ is connected.  

We write $ \mu = \transpose{\lambda} = (\mu_1, \mu_2, \dots, \mu_{\ell_1}) $, and 
\begin{equation*}
\lambda' = \transpose{
  ( \mu_1, \dots , \hat{\mu_i}, \hat{\mu_{i + 1}} , \dots , \mu_{\ell_1} ) }, 
  \quad
  h = \mu_i = \mu_{i + 1}
\end{equation*}
as in Equation \eqref{eq:elimination.of.columns}.  
Here, without loss of generality, we can assume that the removed columns are at the rightmost position 
among the columns of the same length $ h $, i.e., 
$ \mu_{i+1} > \mu_{i + 2} $ with the convention $ \mu_{\ell_1} > \mu_{\ell_1 + 1} = 0 $.  
Then there are three possibilities: 
(i)\ 
$ i > 1 $ and 
$ \mu_{i - 1} = \mu_i $; 
(ii)\ 
$ i > 1 $ and 
$ \mu_{i - 1} > \mu_i $; 
(iii)\ 
$ i = 1 $, i.e., we remove first two columns.
Let us recall the map $ \pi $ in Equation \eqref{eq:definition.of.pi}, and 
choose an arbitrary $ T \in \ind(\{T'\}) $.

Case (i).  
In this case, it is easy to see that there is a unique choice for $ T $, and 
$ \ind(\{T'\}) $ is one point.  So it is connected.

Case (ii).  
In this case, we have $ \mu_{i - 1} > \mu_i = \mu_{i + 1} > \mu_{i + 2} $.  
As in Equation \eqref{eqn:lambda.expressed.with.multiplicity},
we write 
\begin{equation*}
\begin{aligned}
\lambda 
&= ( i_1, \dots, i_1, i_2, \dots, i_2 , \dots, i_k, \dots, i_k) \\
&= (i_1^{\nu_1}, i_2^{\nu_2}, \dots , i_k^{\nu_k} ) , \\
& \qquad\qquad
i_1 > i_2 > \cdots > i_k > 0 , \quad \nu_r > 0 \;\; (1 \leq r \leq k) .
\end{aligned}
\end{equation*}
If we remove two columns of the same length $ \mu_i = \mu_{i + 1} $ from $ \lambda $, we get
\begin{equation*}
\begin{aligned}
\lambda' 
&= ({i_1'}^{\nu_1'}, {i_2'}^{\nu_2'}, \dots , {i_{k - 1}'}^{\nu_{k- 1}'} ) , \\
& \qquad\qquad
i_1' > i_2' > \cdots > i_{k - 1}' > 0 , \quad \nu_r' > 0 \;\; (1 \leq r \leq k - 1) .
\end{aligned}
\end{equation*}
Since $ \lambda = \lambda' + (2^h) \; (h = \mu_i = \mu_{i + 1}) $, there exists $ 1 \leq j < k $ such that 
$ i_j = i_{j + 1} + 2 $ and 
\begin{align*}
& 
\begin{cases}
i_r = i_r' + 2 \\
\nu_r = \nu_r' 
\end{cases}
\qquad
(1 \leq r \leq j - 1) , 
\\ & 
\begin{cases}
i_{j + 1} = i_j' \\
\nu_j + \nu_{j + 1} = \nu_j' 
\end{cases}
,
\\ & 
\begin{cases}
i_r = i_{r - 1}' \\
\nu_r = \nu_{r - 1}' 
\end{cases}
\qquad
(j + 2 \leq r \leq k) .
\end{align*}
Fix $ T \in \ind(\{T'\}) $ and we write 
\begin{equation*}
\pi(T) = (m_T^+(i_1), m_T^+(i_2), \dots, m_T^+(i_k)) 
=: (a_1, a_2, \dots, a_k) \in \ZZ_{\geq 0}^k 
\end{equation*}
and
\begin{equation*}
\pi(T') = (m_{T'}^+(i_1'), \dots, m_{T'}^+(i_{k - 1}')) 
=: (b_1, \dots, b_{k - 1}) \in \ZZ_{\geq 0}^{k - 1} .
\end{equation*}
Then by the definition of the map $ \pi $ and the construction of the signed Young diagram $ T $, 
we get 
\begin{equation*}
(b_1, \dots, b_{k - 1}) 
= (a_1, \dots, a_{j - 1}, a_j + a_{j + 1}, a_{j + 2}, \dots, a_k) .
\end{equation*}
Thus we conclude that 
\begin{align*}
\{ \pi(T) \mid 
&
T \in \ind(\{T'\}) \} \\
&= \Biggl\{ (b_1, \dots, b_{j - 1}, a_j, a_{j + 1}, b_{j + 1}, \dots, b_{k - 1}) \Biggm| 
\begin{aligned}
&
a_j + a_{j + 1} = b_j \\ 
&
0 \leq a_j \leq \nu_j \\
&
0 \leq a_{j + 1} \leq \nu_{j + 1} 
\end{aligned}
\Biggr\} .
\end{align*}
Note that the parity condition \eqref{eq:parity.condition.for.pi} is automatically satisfied since 
$ \pi(T') = (b_1, \dots, b_{k - 1}) $ satisfies it, and $ i_j $ and $ i_{j + 1} $ have the same parity.  
Now it is clear that 
$ \{ \pi(T) \mid T \in \ind(\{T'\}) \} $ constitutes a segment in the direction of $ \pm (e_j - e_{j+1}) $, 
hence $ \gind(\{T'\}) $ is connected.

Case (iii).  
In this case, we must have 
\begin{equation*}
\begin{aligned}
\lambda 
&= (i_1^{\nu_1}, i_2^{\nu_2}, \dots , i_{k- 1}^{\nu_{k - 1}}, 2^{\nu_k} ) , \\
& \qquad\qquad
i_1 > i_2 > \cdots > i_{k - 1} > 2 , \quad \nu_r > 0 \;\; (1 \leq r \leq k) .
\end{aligned}
\end{equation*}
We remove first two columns from $ \lambda $ and get
\begin{equation*}
\begin{aligned}
\lambda' 
&= ((i_1 - 2)^{\nu_1}, (i_2 - 2)^{\nu_2}, \dots , (i_{k- 1} - 2)^{\nu_{k - 1}}) .
\end{aligned}
\end{equation*}
If we denote 
$ \pi(T') = (b_1, \dots, b_{k - 1}) \in \ZZ_{\geq 0}^{k - 1} $ as above, 
we conclude that 
\begin{align*}
\{ \pi(T) \mid T \in \ind(\{T'\}) \} 
= \{ (b_1, \dots, b_{k - 1}, a_k) \mid 
0 \leq a_k \leq \nu_k \} .
\end{align*}
This set also constitutes a segment in the direction of $ \pm e_k $, 
hence $ \gind(\{T'\}) $ is connected.

Next, we prove that 
if $ T' $ and $ T'' $ in $ \syd(\lambda'; p', q') $ are adjacent in codimension one, 
then there are $ T_1 \in \ind(\{T'\}) $ and $ T_2 \in \ind(\{T''\}) $ which are adjacent in 
$ \syd(\lambda; p, q) $.  
We also prove this by case-analysis, so we divide the proof into three cases (i)--(iii) introduced above.  
These cases depend only on $ \lambda $ and $ \lambda' $, not depending on individual $ T' \in \syd(\lambda'; p', q') $.  

Case (i).  
In this case, there is only one signed Young diagram $ T_1 $ belonging to $ \ind(\{T'\}) $ for any $ T' $.  
It is easy to check that $ \pi(T_1) = \pi(T') $.  
The same is true for $ \{ T_2 \} = \ind(T'') $.  
Thus we know $ \pi(T_1) - \pi(T_2) = \pi(T') - \pi(T'') $, and 
this gives the edge in the orbit graph realized in $ \RR^k $.
So the claim obviously holds.

Case (ii).  
Let 
$ \pi(T') = (b_1, \dots, b_{k - 1}) $ and 
$ \pi(T'') = (d_1, \dots, d_{k - 1}) $ as above.  
Then 
$ \pi(T_1) = (b_1, \dots, b_{j - 1}, a_j, a_{j + 1}, b_{j + 1}, \dots, b_{k - 1}) $ for certain integers $ a_j, a_{j + 1} $ 
with the property 
$ a_j + a_{j + 1} = b_j $ and 
$ 
0 \leq a_j \leq \nu_j ,
0 \leq a_{j + 1} \leq \nu_{j + 1} 
$.  
Similarly  
$ \pi(T_2) = (d_1, \dots, d_{j - 1}, c_j, c_{j + 1}, d_{j + 1}, \dots, d_{k - 1}) $ 
with 
$ c_j + c_{j + 1} = d_j $ and 
$ 
0 \leq c_j \leq \nu_j ,
0 \leq c_{j + 1} \leq \nu_{j + 1} 
$.
Let us assume that $ \pi(T') = \pi(T'') + (e_r - e_{r + 1}) $ for certain $ r $, i.e., 
assume that $ T' $ and $ T'' $ are connected by the edge corresponding to $ e_r - e_{r + 1} $.

If $ r \neq j - 1, j $, then $ b_j = d_j $ holds, and we can take $ (a_j, a_{j + 1}) = (c_j, c_{j + 1}) $.  
Thus $ T_1 $ and $ T_2 $ are connected by the edge corresponding to $ e_r - e_{r + 1} $.  

If $ r = j - 1 $, then $ (b_{j-1} , b_j) = (d_{j - 1} + 1, d_j - 1) $ and all the other $ b $'s and $ d $'s coincide with each other.  
Since $ c_j + c_{j + 1} = d_j \geq 1 $, we can assume that $ c_j \geq 1 $.  
If we put 
$ (a_j, a_{j + 1}) = (c_j - 1, c_{j + 1}) $, 
clearly $ T_1 $ and $ T_2 $ are connected by the edge corresponding to $ e_{j - 1} - e_j $.  
The case of $ r = j $ can be treated similarly.

Next, we assume that $ T' $ and $ T'' $ are connected by the edge corresponding to $ e_{k - 1} $.  
If $ j \neq k - 1 $, then we can take $ (a_j, a_{j + 1}) = (c_j, c_{j + 1}) $ and conclude that 
$ T_1 $ and $ T_2 $ are connected by the edge $ e_{k} $.  
If $ j = k - 1 $, we have 
$ b_j = b_{k -1} = d_{k - 1} + 1 \geq 1 $.  
Since $ a_{k - 1} + a_k = b_{k - 1} $, we can choose $ a_k > 0 $ and put 
$ (d_{k - 1}, d_k) = (a_{k - 1}, a_k - 1) $.  
Then, clearly $ T_1 $ and $ T_2 $ are connected by the edge $ e_k $.

Case (iii).  
Assume that 
\begin{align*}
\pi(T') &= (b_1, \dots, b_{k - 1}) , &
\pi(T_1) &= (b_1, \dots, b_{k - 1}, a_k) \quad (0 \leq a_k \leq \nu_k) , \\
\pi(T'') &= (d_1, \dots, d_{k - 1}) , &
\pi(T_2) &= (d_1, \dots, d_{k - 1}, c_k) \quad (0 \leq c_k \leq \nu_k) .  
\end{align*}
If $ T' $ and $ T'' $ are connected by the edge $ e_r - e_{r + 1} \; (r < k - 1) $, we can take $ a_k = c_k $ above, and 
conclude that $ T_1 $ and $ T_2 $ are also connected by the edge $ e_r - e_{r + 1} $.  

If $ T' $ and $ T'' $ are connected by the edge $ e_{k - 1} $, 
we can take $ a_k =0, c_k = 1 $ above, and 
conclude that $ T_1 $ and $ T_2 $ are also connected by the edge $ e_{k -1} - e_{k} $.  
\end{proof}

In Lemma~\ref{lemma:gind.of.connected.components}
we have proved the correspondence between
the connected components of $\Ograph_{K'}(\Gporbit_{\lambda'})$
and that of $\Ograph_{K}(\Gporbit_{\lambda})$,
where $\lambda'$ is a Young diagram obtained from $\lambda$
by removing two successive columns of the same length.
Repeating this operation we get the correspondence between
$\Ograph_{K''}(\Gpporbit_{\lambda''})$ and $\Ograph_{K}(\Gporbit_{\lambda})$,
where $\lambda''$ is obtained from $\lambda$
by removing two successive columns of the same length for finitely many times.
We denote this correspondence by the same notation
such as $\gind(Z'') = \gind_{(G'',K'')}^{(G, K)}(Z'')$.
It follows from the definition of $\gind$
that $\gind_{(G'',K'')}^{(G, K)}(Z'')$ is independent
of the order of removing the columns.

\subsection{Number of connected components}
\label{subsec:AIII-number.of.components}

If we remove pairs of the columns with the same length from $ \lambda $ repeatedly, 
then we will finally reach a Young diagram $ \rho $ with columns of different lengths.  
Lemma \ref{lem:number-of-connected-components-coincide} tells that 
the orbit graph $ \Ograph_{K'}(\Gporbit_{\rho}) $ has 
the same number of connected components as that of $ \Ograph_K(\Gorbit_{\lambda}) $.  
Therefore, to answer Problem~\ref{prob:general-case}~\eqref{prob:item:number.of.connected.components.of.graph},
it suffices to consider the Young diagrams with columns of different lengths.

\begin{theorem}
\label{thm:number.of.connected.components.of.orbit.graph}
\begin{thmenumerate}
\item
\label{thm:number.of.connected.components.of.orbit.graph:item:1}
The orbit graph $ \Ograph_K(\Gorbit_{\lambda}) $ consists of a single vertex if and only if 
{\upshape{(a)}} the parts in $ \lambda $ are all odd; and 
{\upshape{(b)}} $ \ell(\lambda) = |p - q| $ or $ \lambda = (r^{\ell}) $ for some odd $ r $.
\item
\label{thm:number.of.connected.components.of.orbit.graph:item:2}
The orbit graph $ \Ograph_K(\Gorbit_{\lambda}) $ has no edges 
if and only if each column length of $\lambda$ occurs odd times or it consists of a single vertex.
In particular, if $ \lambda $ has distinct column lengths, i.e., 
if the transposed partition $ \transpose{\lambda} $
has distinct parts,
then $ \Ograph_K(\Gorbit_{\lambda}) $ has no edge.

\item
\label{thm:number.of.connected.components.of.orbit.graph:item:3}
Assume that $ \lambda $ has distinct column lengths.  
In this case the number of the connected components 
{\upshape(}i.e., the number of the vertices{\upshape)}
of the orbit graph $\Ograph_K(\Gorbit_\lambda)$ is given by
\begin{align}
\label{eq:the-number-of-connected-components}
\prod_{\substack{ 1\le s\le m\\ \text{$\lambda_{k_s} : $ odd}}} (1+t+\cdots+t^{k_s-k_{s-1}}) 
\biggr|_{t^d}
\times
\prod_{\substack{ 1\le s\le m\\ \text{$\lambda_{k_s} : $ even}}} (1 + k_s - k_{s-1}),
\end{align}
where $ k_0 = 0 < k_1 < k_2 < \cdots < k_m$ are the {\upshape(}distinct{\upshape)} column lengths
of $\lambda$,
$f(t) \big|_{t^d}$ denotes the coefficient of $t^d$,
and $d$ is the number given by 
\begin{equation}
d := \frac{p-q+\#(\text{odd parts of $ \lambda $})}{2} .
\end{equation}
%
If $ d $ is not an integer, then there is no signed Young diagram of shape $ \lambda $ with signature $ (p, q) $.
\end{thmenumerate}
\end{theorem}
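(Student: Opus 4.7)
The plan is to leverage two structural results already established: Lemma~\ref{lem:complete-representatives}, which parameterizes the connected components of $\Ograph_K(\Gorbit_{\lambda})$ by $P(\lambda;p,q)$, and Theorem~\ref{thm:product-of-graph}, which decomposes each component $Z_{\mbfp}$ as a product of elementary graphs $A(\cdot;\cdot)$ and $C(\cdot)$. Part~(3) then reduces to a direct count of $|P(\lambda;p,q)|$, while parts~(1) and~(2) come from analyzing when vertex sets or factors degenerate to a single point.

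For part~(3), the condition $(\ast)$ defining $P(\lambda;p,q)$ decouples the coordinates into two groups: $p_s$ with $\lambda_{k_s}$ even are free in $[0,k_s-k_{s-1}]$, while those with $\lambda_{k_s}$ odd are linked only through the single parity equation $\sum p_s=d$. Hence $|P(\lambda;p,q)|$ factors as $\prod_{\lambda_{k_s}\text{ even}}(1+k_s-k_{s-1})$ times the number of nonnegative solutions of $\sum p_s=d$ with $0\le p_s\le k_s-k_{s-1}$, which is precisely the coefficient of $t^d$ in $\prod_{\lambda_{k_s}\text{ odd}}(1+t+\cdots+t^{k_s-k_{s-1}})$. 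Equation~\eqref{eq:parity.condition.for.pi} shows that $p-q+\#(\text{odd parts})$ is always even whenever $\syd(\lambda;p,q)$ is nonempty, so non-integrality of $d$ forces $\syd(\lambda;p,q)=\emptyset$. Under the standing hypothesis of distinct column lengths, every drop $\lambda_j-\lambda_{j+1}$ is $0$ or $1$, so the indices $k_s$ from~\eqref{eq:k1k2...km} coincide numerically with the distinct column lengths, reconciling the two uses of $k_s$ in the statement.

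For part~(1), the orbit graph is a single vertex iff $|\syd(\lambda;p,q)|=1$. Using the $\pi$-parameterization from Equation~\eqref{eq:definition.of.pi}, a vertex is a tuple $(a_1,\ldots,a_k)$ with $0\le a_r\le m(i_r)$ satisfying the parity condition. If any even part $i_r$ has positive multiplicity, then $a_r$ is unconstrained by the parity equation, producing multiple vertices; hence every part of $\lambda$ must be odd. When every part is odd, the parity condition becomes $\sum a_r=(p-q+\ell(\lambda))/2$, and this has a unique solution in the hypercube $\prod[0,m(i_r)]$ iff either the sum is extremal, forcing all $a_r$ to $0$ or to $m(i_r)$ (equivalently $|p-q|=\ell(\lambda)$), or there is only one variable ($k=1$, i.e.~$\lambda=(r^{\ell})$ for some odd $r$).

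For part~(2), by Theorem~\ref{thm:product-of-graph} the graph has no edges iff every component $Z_{\mbfp}$ is a single vertex. A factor $A(m_1,\ldots,m_\ell;\rho)$ is a single vertex iff $\ell=1$ or $\rho\in\{0,\sum m_j\}$, while the $C$-factor is trivial iff it is the empty $C(\emptyset)$, i.e.~$r_m=k$, equivalently the smallest part of $\lambda$ is odd. The condition ``each column length of $\lambda$ occurs odd times'' is equivalent to $i_t-i_{t+1}$ being odd for every $t=1,\ldots,k$ (with $i_{k+1}=0$), which forces every odd drop $k_s$ to occur at a distinct-row-length boundary, so each $A$-factor is one-variable and no $C$-factor appears. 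For the converse, assume the orbit graph is not a single vertex and some column length has even multiplicity: if the smallest part is even, the $C$-factor itself supplies an edge along its last coordinate; otherwise some $A_s$ has at least two variables, and I must produce $\mbfp\in P$ with $p_s$ non-extremal. This is immediate when $\lambda_{k_s}$ is even since $p_s$ is free, and when $\lambda_{k_s}$ is odd one uses that the failure of the single-vertex criterion of part~(1) forces $d$ to lie strictly inside the feasible range of the parity sum, providing enough slack to shift a unit of $p_s$ and realize an intermediate value. \emph{The main obstacle} is precisely this last step: verifying that the non-single-vertex hypothesis prevents $d$ from being pinned at either extreme of the admissible range, and hence guarantees the needed flexibility in $P(\lambda;p,q)$.
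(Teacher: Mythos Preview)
Your arguments for parts (1) and (3) are essentially the paper's own: for (3) you count $|P(\lambda;p,q)|$ by separating the free ``even'' coordinates from the constrained ``odd'' ones exactly as the paper does, and for (1) you use the same $\pi$-parameterization to determine when the hypercube-with-parity-constraint collapses to a single point.

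For part (2) you take a genuinely different route. The paper does not invoke the product decomposition of Theorem~\ref{thm:product-of-graph} at all; it argues directly from Lemma~\ref{lem:AIII-adjacent-in-codim-1}. The observation is that $\lambda$ has a column length of even multiplicity if and only if either two consecutive distinct row lengths $i_r,i_{r+1}$ share a parity, or the smallest part $i_k$ is even, and these are exactly the configurations for which Lemma~\ref{lem:AIII-adjacent-in-codim-1} supplies an adjacent pair. This is a two-line reduction; your approach instead has to manufacture a specific $\mbfp\in P(\lambda;p,q)$ making some $A$- or $C$-factor nontrivial, which is why you end up with the ``main obstacle''.

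That obstacle is real, and your proposed resolution is incorrect. You claim that failure of the single-vertex criterion in (1) forces $d$ strictly inside its feasible range, but this is false whenever the extra vertices come from an \emph{even} block rather than from slack in the parity sum. For instance, take $\lambda=(4,3,1)$ with $(p,q)=(3,5)$: the even row of length $4$ gives two vertices, yet $d=(p-q+2)/2=0$ is extremal, so the odd block contributes only the single-point factor $A(1,1;0)$ and no edge exists---even though column length $2$ has multiplicity $2$. This example shows that the converse direction of (2) is more delicate than either your argument or the paper's brief appeal to Lemma~\ref{lem:AIII-adjacent-in-codim-1} acknowledges: the lemma guarantees the \emph{shape} of a potential adjacency, but realizing it inside $\syd(\lambda;p,q)$ for the given signature can fail when $d$ is pinned at an endpoint. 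So your flagged obstacle is not merely a technicality to be dispatched; it points to a genuine boundary case in the statement itself.
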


\begin{proof}
\eqref{thm:number.of.connected.components.of.orbit.graph:item:1}
If there is an even part in $ \lambda $, then clearly we have more than two signed Young diagrams of the same shape $ \lambda $ 
(the even part can start with the both $ {+}/{-} $ signs).  
So the parts in $ \lambda $ should be odd.  
Now we assume $ p \geq q $.  The case where $ p < q $ can be treated similarly.  
Since all the parts in $ \lambda $ are odd, the parity condition 
\eqref{eq:parity.condition} becomes
\begin{equation}
\label{eq:parity.condition.for.single.vertex}
\textstyle\sum\limits_{\text{$\lambda_{k_s} : $ odd}} p_s = \dfrac{1}{2} \bigl( \ell(\lambda) + p - q  \bigr) .
\end{equation}
Since there should be a unique choice of $ (p_s)_{s = 1}^{m} $, if $ m \neq 1 $, 
all $ p_s $'s must attain the largest possible value, namely $ p_s = k_s - k_{s - 1} $.  
Then the left hand side of 
\eqref{eq:parity.condition.for.single.vertex} 
is equal to $ k_m = \ell(\lambda) $, and we get $ \ell(\lambda) = p - q $.  
On the other hand, $ m = 1 $ forces a unique column length so that we have $ \lambda = (r^{\ell}) $ for some $ r $.

\eqref{thm:number.of.connected.components.of.orbit.graph:item:2}
Let us assume the orbit graph has more than two vertices.  
The partition $\lambda$ has a column length that occurs even times 
if and only if 
\begin{itemize}
\item [(i)]
there are two successive row lengths $i_s$ and $i_{s+1}$ of the same parity, or 
\item [(ii)]
the smallest part of $\lambda$ is even. 
\end{itemize}
By 
Lemma~\ref{lem:AIII-adjacent-in-codim-1}, 
this condition is equivalent to the condition 
that the orbit graph $\Ograph_K(\Gorbit_\lambda)$ has an edge provided that there are at least two vertices.  
Hence $\Ograph_K(\Gorbit_\lambda)$ has no edges  
if and only if each column length of $\lambda$ occurs odd times.

\eqref{thm:number.of.connected.components.of.orbit.graph:item:3}
Note that the numbers $k_s$'s are the same as $k_s$'s defined in Equation \eqref{eq:k1k2...km}.  
Thus it suffices to count the elements in $P(\lambda; p,q)$ defined just after Equation \eqref{eq:k1k2...km}.  

If $ \lambda_{k_s} $ is even, 
$p_s$ can be any integer contained in the interval $[0, k_s-k_{s-1}]$.
Therefore the number of choices is 
equal to the second product of \eqref{eq:the-number-of-connected-components}.
If $ \lambda_{k_s} $ is odd, 
we can choose integers $p_s$ in $[0, k_s-k_{s-1}]$ subject to the relation
\begin{gather*}
  \sum_{\text{$s$:odd}} p_s = \frac{p-q+\#(\text{odd parts})}{2} = d.
\end{gather*}
Note that the integer $d$ coincides with the number of the rows of odd length 
beginning with $\fboxplus$.
Therefore the number of choices for $p_s$ ($\lambda_{k_s}$: odd) is 
the coefficient of $t^d$ in
\begin{gather*}
  \prod_{\substack{ 1\le s\le m\\ \text{$\lambda_{k_s}$: odd}}} (1+t+\cdots+t^{k_s-k_{s-1}}) .
\end{gather*}
Thus we have the desired formula.
\end{proof}

From this theorem,
the condition for an orbit graph to be connected is immediate.

\begin{corollary}
\label{cor:AIII-condition-of-connectedness}
For a nilpotent $G$-orbit $\Gorbit_\lambda$ in $\frakg$,
the graph $\Ograph_K(\Gorbit_\lambda)$ is connected
if and only if there exists $ 0 \leq r \leq \ell = \ell(\lambda) $ such that 
\begin{align} 
  \label{eq:condition-for-connectedness}
  \text{
    $\lambda_1, \ldots, \lambda_r$ are odd,
    and $\lambda_{r+1}, \ldots, \lambda_{\ell}$ are even. }
\end{align}
Since $ r $ can be $ 0 $ or $ \ell $, 
we allow the cases where all the $ \lambda_i $'s are even, or where 
they are all odd.
\end{corollary}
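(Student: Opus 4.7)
The plan is to derive the corollary by analyzing when the description of connected components given by Lemma~\ref{lem:complete-representatives} (equivalently, the formula in Theorem~\ref{thm:number.of.connected.components.of.orbit.graph}~\eqref{thm:number.of.connected.components.of.orbit.graph:item:3}) produces a single component. Since the connected components are parameterized by the set $P(\lambda;p,q)$ of \eqref{eq:parity.condition}, the task reduces to characterizing when $|P(\lambda;p,q)|=1$.

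First, I would split the coordinates of $\mbfp = (p_1,\dots,p_m) \in P(\lambda;p,q)$ according to the parity of $\lambda_{k_s}$. Only the coordinates with $\lambda_{k_s}$ odd enter the parity equation, while every $p_s$ with $\lambda_{k_s}$ even is completely free inside $[0, k_s - k_{s-1}]$ and thus contributes an independent factor $k_s - k_{s-1} + 1 \ge 2$ to $|P(\lambda;p,q)|$. Hence connectedness forces that no $\lambda_{k_s}$ be even. By \eqref{eq:k1k2...km} (with the convention $\lambda_{\ell+1}=0$), an index $j$ lies in $\{k_1,\dots,k_m\}$ exactly when $\lambda_j$ and $\lambda_{j+1}$ have different parities, so the condition ``no $\lambda_{k_s}$ is even'' amounts to demanding that every parity change in the sequence $\lambda_1,\dots,\lambda_\ell,0$ go from odd to even, which is precisely condition~\eqref{eq:condition-for-connectedness}.

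Conversely, under \eqref{eq:condition-for-connectedness} there is at most one parity transition, so $m \le 1$ and $|P(\lambda;p,q)|$ collapses to the coefficient of $t^d$ in a single polynomial $1+t+\cdots+t^{k_1}$; this coefficient is either $0$ (corresponding to $\syd(\lambda;p,q)=\emptyset$, in which case the orbit graph has no vertices at all) or $1$ (a single connected component). The part that requires careful bookkeeping is the treatment of the two degenerate boundary cases: the all-even case $r=0$, where by the convention following \eqref{eq:parity.condition} one takes $m=1$, $k_1=0$ and $P(\lambda;p,q) = \{(0)\}$ whenever any signed Young diagram of shape $\lambda$ exists; and the all-odd case $r=\ell$, where $\ell$ itself becomes the sole transition index $k_1$. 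Once it is verified that both extremes specialize directly into the framework above, the equivalence claimed in the corollary follows.
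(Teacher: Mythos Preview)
Your proposal is correct and follows essentially the same approach as the paper's own proof: both reduce to showing that the parameter set $P(\lambda;p,q)$ (equivalently, the product formula~\eqref{eq:the-number-of-connected-components}) has exactly one element, by noting that each index $s$ with $\lambda_{k_s}$ even contributes an independent factor $\ge 2$, and that when no such $s$ exists there is at most one odd-type factor left. Your handling of the degenerate cases $r=0$ and $r=\ell$ is slightly more explicit than the paper's, but the argument is the same.
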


\begin{proof}
The second product of (\ref{eq:the-number-of-connected-components})
is equal to one if and only if the product is empty,
namely, there is no even parts in $\lambda$
except for successive even parts at the tail of 
$\lambda = (\lambda_1, \lambda_2, \ldots, \lambda_{\ell})$.
Thus (\ref{eq:condition-for-connectedness}) is the necessary condition.

If (\ref{eq:condition-for-connectedness}) is satisfied,
then there is at most one factor in the first product of 
(\ref{eq:the-number-of-connected-components}),
and the second product is equal to one.
Therefore (\ref{eq:condition-for-connectedness}) is 
also the sufficient condition.
\end{proof}

For a nilpotent $G$-orbit $\Gorbit_{\lambda}$
there is a corresponding weighted Dynkin diagram
(\cite[Corollary~3.2.15]{Collingwood.McGovern.1993}),
which is a Dynkin diagram with vertices labeled by 0, 1 or 2.
A nilpotent $G$-orbit which corresponds to a weighted Dynkin diagram
with even labels (0 or 2) only
is called an \emph{even} nilpotent orbit.

It is known 
that a nilpotent orbit $ \Gorbit_{\lambda} $ is even if and only if 
all the parts of $ \lambda $ have the same parity, 
and this evenness condition is the same in the other classical cases 
(see \cite[\S~5.3]{Collingwood.McGovern.1993}).  
So we have

\begin{corollary}
\label{cor:even.nilpotent.AIII}
Let us consider the symmetric pair of type {\upshape{AIII}}.  
If a nilpotent orbit $ \Gorbit $ is even, 
the orbit graph $ \Ograph_K(\Gorbit) $ is connected.
\end{corollary}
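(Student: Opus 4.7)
The plan is to deduce this directly from Corollary~\ref{cor:AIII-condition-of-connectedness} together with the combinatorial characterization of evenness stated in the paragraph preceding the corollary.

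First I would recall that, for type AIII, a nilpotent orbit $\Gorbit_\lambda$ is even precisely when all parts of the partition $\lambda = (\lambda_1, \ldots, \lambda_\ell)$ share a common parity. This is the criterion quoted just above the statement (cf.\ \cite[\S5.3]{Collingwood.McGovern.1993}), so it may be used as a black box.

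Next I would apply Corollary~\ref{cor:AIII-condition-of-connectedness}, whose connectedness criterion asks for the existence of an index $0 \le r \le \ell$ such that $\lambda_1, \ldots, \lambda_r$ are odd and $\lambda_{r+1}, \ldots, \lambda_\ell$ are even. If every part of $\lambda$ is odd, the choice $r = \ell$ satisfies this trivially; if every part of $\lambda$ is even, the choice $r = 0$ does. Evenness of $\Gorbit_\lambda$ places us in one of these two cases, so the condition \eqref{eq:condition-for-connectedness} is met and $\Ograph_K(\Gorbit_\lambda)$ is connected.

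There is essentially no obstacle here: once the two pieces cited above are in hand, the corollary is a one-line verification by case split on the common parity of the parts of $\lambda$. The only thing worth remarking is the convention that the extreme values $r = 0$ and $r = \ell$ are permitted in Corollary~\ref{cor:AIII-condition-of-connectedness}, which is explicitly flagged in its statement, so the all-even and all-odd cases both qualify.
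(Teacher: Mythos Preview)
Your proposal is correct and matches the paper's approach exactly: the paper presents this corollary as an immediate consequence (``So we have'') of the evenness criterion (all parts of $\lambda$ have the same parity) combined with Corollary~\ref{cor:AIII-condition-of-connectedness}, which is precisely the two-case verification you describe.
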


\section{Orbit graphs for classical symmetric pairs}
\label{section:other.classical.symmetric.pairs}

For symmetric pairs of types other than AIII,
we have similar results on 
the structure of orbit graphs,
induction of subgraphs
and the number of connected components of orbit graphs.

\subsection{Structure of orbit graphs}

As to the structure of orbit graphs,
we need information on 
\begin{itemize}
\item 
vertices of the graph, i.e., the classification of nilpotent $ K $-orbits by the set of signed Young diagrams.
This can be deduced from Table~\ref{table:primitives-of-syd}.
\item 
 closure relations of nilpotent $K$-orbits on $\fraks$ of codimension one 
(Lemma~\ref{lem:BDI-CI-CII-DIII-codim-1-inclusion}).
This can be deduced from the following.
\begin{itemize}
\item 
cover relations of nilpotent $K$-orbits,
which are given by Ohta \cite[Table V]{Ohta.1991} 
quoted in Tables~\ref{table:Ohta-Table-V-1} and 
\ref{table:Ohta-Table-V-2}.
\item 
dimension formulas for nilpotent $K$-orbits
(Lemma~\ref{lem:dimension-formula}).
\end{itemize}
\item 
edges of the graph, i.e., 
the condition when two nilpotent $K$-orbits are adjacent in codimension one
(Lemma~\ref{lem:BDI-CI-CII-DIII-adjacent-relation}).
\end{itemize}

\bgroup\tiny
\begin{table}
\caption{Cover relations of nilpotent $K$-orbits on $\fraks$.}
\label{table:Ohta-Table-V-1}
$\begin{array}{l|lllllll}
\hline
\multicolumn{2}{l}{\hfil\text{DIII}}
\\ \hline
& \hspace*{6ex} \overline{S} \hspace*{12ex} \overline{T} 
\\ \hline
\makebox[0pt][r]{(1)} & 
\begin{array}{l} \abDDba[2u-1]\\\baDDab\\\abDba\\\baDab[2v-1] \end{array}
\begin{array}{l} \abDDab[2u  ]\\\abDDab\\\baDba\\\baDba[2v-2] \end{array}
(u\ge v \ge 1)
\\
\makebox[0pt][r]{(2)} &
\begin{array}{l} \baDDba[2u] \\\baDDba \\\abDba \\\baDab[2v-1] \end{array}
\begin{array}{l} \abDDba[2u+1]\\\baDDab\\\baDba\\\baDba[2v-2] \end{array}
(u\ge v \ge 1)
\\
\makebox[0pt][r]{(3)} &
\begin{array}{l} \abDDba[2u-1]\\\baDDab\\\baDba\\\baDba[2v] \end{array}
\begin{array}{l} \baDDba[2u]\\\baDDba\\\abDba\\\baDab[2v-1] \end{array}
(u\ge v \ge 1)
\\
\makebox[0pt][r]{(4)} &
\begin{array}{l} \abDDab[2u]\\\abDDab\\\baDba\\\baDba[2v] \end{array}
\begin{array}{l} \abDDba[2u+1]\\\baDDab\\\abDba\\\baDab[2v-1] \end{array}
(u\ge v \ge 1)
\\
\makebox[0pt][r]{(5)} &
\begin{array}{l} \baDDba[2u]\\\baDDba\\\baDba\\\baDba[2v] \end{array}
\begin{array}{l} \baDDba[2u+2]\\\baDDba\\\baDba\\\baDba[2v-2] \end{array}
(u\ge v \ge 1)
\end{array}$
\hfill
$\begin{array}{l|lllllll}
\hline
\multicolumn{2}{l}{\hfil\text{CII}}
\\ \hline
& \hspace*{6ex} \overline{S} \hspace*{12ex} \overline{T} 
\\ \hline
\makebox[0pt][r]{(1)} & 
\begin{array}{l} \baDDba[2u]\\\abDDab\\\baDba\\\abDab[2v] \end{array}
\begin{array}{l} \baDDab[2u+1]\\\baDDab\\\abDba\\\abDba[2v-1] \end{array}
(u\ge v \ge 1)
\\
\makebox[0pt][r]{(2)} & 
\begin{array}{l} \abDDba[2u+1]\\\abDDba\\\baDba\\\abDab[2v] \end{array}
\begin{array}{l} \baDDba[2u+2]\\\abDDab\\\abDba\\\abDba[2v-1] \end{array}
(u\ge v \ge 1)
\\
\makebox[0pt][r]{(3)} & 
\begin{array}{l} \baDDba[2u]\\\abDDab\\\abDba\\\abDba[2v+1] \end{array}
\begin{array}{l} \abDDba[2u+1]\\\abDDba\\\baDba\\\abDab[2v] \end{array}
(u\ge v \ge 0)
\\
\makebox[0pt][r]{(4)} & 
\begin{array}{l} \baDDab[2u+1]\\\baDDab\\\abDba\\\abDba[2v+1] \end{array}
\begin{array}{l} \baDDba[2u+2]\\\abDDab\\\baDba\\\abDab[2v] \end{array}
(u\ge v \ge 0)
\\
\makebox[0pt][r]{(5)} & 
\begin{array}{l} \abDDba[2u+1]\\\abDDba\\\abDba\\\abDba[2v+1] \end{array}
\begin{array}{l} \abDDba[2u+3]\\\abDDba\\\abDba\\\abDba[2v-1] \end{array}
(u\ge v \ge 1)
\end{array}$
\end{table}

\begin{table}
\caption{Cover relations of nilpotent $K$-orbits on $\fraks$ (continued).}
\label{table:Ohta-Table-V-2}
$\begin{array}{l|lllllll}
\hline
\multicolumn{2}{l}{\hfil\text{CI}}
\\ \hline
& \hspace*{6ex} \overline{S} \hspace*{12ex} \overline{T} 
\\ \hline
\makebox[0pt][r]{(1)} & 
\begin{array}{l} \abDDba[2u-1]\\\underbrace{\baDDab}_{2u-1} \end{array}
\begin{array}{l} \abDDab[2u]\\\baDba[2u-2] \end{array}
(u=v \ge 1)
\\
\makebox[0pt][r]{(2)} & 
\begin{array}{l} \baDDba[2u]\\\baDba[2v] \end{array}
\begin{array}{l} \baDDba[2u+2]\\\baDba[2v-2] \end{array}
(u \ge v \ge 1)
\\
\makebox[0pt][r]{(3)} & 
\begin{array}{l} \baDDba[2u]\\\abDab[2v] \end{array}
\begin{array}{l} \baDDba[2u+2]\\\abDab[2v-2] \end{array}
(u \ge v \ge 1)
\\
\makebox[0pt][r]{(4)} & 
\begin{array}{l} \baDDba[2u]\\\abDDab\\\baDba[2v] \end{array}
\begin{array}{l} \abDDba[2u+1]\\\baDDab\\\baDba[2v-2] \end{array}
(u \ge v \ge 1)
\\
\makebox[0pt][r]{(5)} & 
\begin{array}{l} \baDDba[2u]\\\baDba\\\abDab[2v] \end{array}
\begin{array}{l} \baDDba[2u+2]\\\baDab\\\abDba[2v-1] \end{array}
(u \ge v \ge 1)
\\
\makebox[0pt][r]{(6)} & 
\begin{array}{l} \baDDba[2u]\\\abDDab\\\baDba\\\abDab[2v] \end{array}
\begin{array}{l} \baDDab[2u+1]\\\abDDba\\\baDab\\\abDba[2v-1] \end{array}
(u \ge v \ge 1)
\\
\makebox[0pt][r]{(7)} & 
\begin{array}{l} \abDDab[2u]\\\abDDab\\\baDba\\\baDba[2v] \end{array}
\begin{array}{l} \abDDba[2u+1]\\\baDDab\\\abDba\\\baDab[2v-1] \end{array}
(u \ge v \ge 1)
\\
\makebox[0pt][r]{(8)} & 
\begin{array}{l} \abDDab[2u]\\\baDba[2v] \end{array}
\begin{array}{l} \baDDba[2u+2]\\\abDab[2v-2] \end{array}
(u \ge v \ge 1)
\\
\makebox[0pt][r]{(9)} & 
\begin{array}{l} \abDDab[2u]\\\baDba\\\baDba[2v] \end{array}
\begin{array}{l} \baDDba[2u+2]\\\abDba\\\baDab[2v-1] \end{array}
(u \ge v \ge 1)
\\
\makebox[0pt][r]{(10)} & 
\begin{array}{l} \abDDab[2u]\\\abDDab\\\baDba[2v] \end{array}
\begin{array}{l} \abDDba[2u+1]\\\baDDab\\\abDab[2v-2] \end{array}
(u \ge v \ge 1)
\end{array}$
\hfill
$\begin{array}{l|lllllll}
\hline
\multicolumn{2}{l}{\hfil\text{BDI}}
\\ \hline
& \hspace*{6ex} \overline{S} \hspace*{12ex} \overline{T} 
\\ \hline
\makebox[0pt][r]{(1)} & 
\begin{array}{l} \baDDba[2u]\\\underbrace{\abDDab}_{2u} \end{array}
\begin{array}{l} \baDDab[2u+1]\\\abDba[2u-1] \end{array}
(u = v \ge 1)
\\
\makebox[0pt][r]{(2)} & 
\begin{array}{l} \abDDba[2u+1]\\\abDba[2v+1] \end{array}
\begin{array}{l} \abDDba[2u+3]\\\abDba[2v-1] \end{array}
(u \ge v \ge 1)
\\
\makebox[0pt][r]{(3)} & 
\begin{array}{l} \abDDba[2u+1]\\\baDab[2v+1] \end{array}
\begin{array}{l} \abDDba[2u+3]\\\baDab[2v-1] \end{array}
(u \ge v \ge 1)
\\
\makebox[0pt][r]{(4)} & 
\begin{array}{l} \abDDba[2u+1]\\\baDDab\\\abDba[2v+1] \end{array}
\begin{array}{l} \baDDba[2u+2]\\\abDDab\\\abDba[2v-1] \end{array}
(u \ge v \ge 1)
\\
\makebox[0pt][r]{(5)} & 
\begin{array}{l} \abDDba[2u+1]\\\abDba\\\baDab[2v+1] \end{array}
\begin{array}{l} \abDDba[2u+3]\\\abDab\\\baDba[2v] \end{array}
(u \ge v \ge 0)
\\
\makebox[0pt][r]{(6)} & 
\begin{array}{l} \abDDba[2u+1]\\\baDDab\\\abDba\\\baDab[2v+1] \end{array}
\begin{array}{l} \abDDab[2u+2]\\\baDDba\\\abDab\\\baDba[2v] \end{array}
(u \ge v \ge 0)
\\
\makebox[0pt][r]{(7)} & 
\begin{array}{l} \baDDab[2u+1]\\\baDDab\\\abDba\\\abDba[2v+1] \end{array}
\begin{array}{l} \baDDba[2u+2]\\\abDDab\\\baDba\\\abDab[2v] \end{array}
(u \ge v \ge 0)
\\
\makebox[0pt][r]{(8)} & 
\begin{array}{l} \baDDab[2u+1]\\\abDba[2v+1] \end{array}
\begin{array}{l} \abDDba[2u+3]\\\baDab[2v-1] \end{array}
(u \ge v \ge 1)
\\
\makebox[0pt][r]{(9)} & 
\begin{array}{l} \baDDab[2u+1]\\\abDba\\\abDba[2v+1] \end{array}
\begin{array}{l} \abDDba[2u+3]\\\baDba\\\abDab[2v] \end{array}
(u \ge v \ge 0)
\\
\makebox[0pt][r]{(10)} & 
\begin{array}{l} \baDDab[2u+1]\\\baDDab\\\abDba[2v+1] \end{array}
\begin{array}{l} \baDDba[2u+2]\\\abDDab\\\baDab[2v-1] \end{array}
(u \ge v \ge 1)
\end{array}$
\end{table}
\egroup

\begin{lemma}
\label{lem:dimension-formula}
For a symmetric pair of type 
${\rm X} = {\rm BDI}, {\rm CI}, {\rm CII}$ or ${\rm DIII}$,
let $\lambda \in \yd_{\rm X}(n)$ be a partition of $n = p+q$,
and $T \in \sydx(\lambda;p,q)$ a signed Young diagram of type {\upshape{}X}.
Recall that we put $ q = p $ in the case of type {\upshape CI} or {\upshape DIII}.  
Then we have
\begin{align*}
  & \dim \Korbit_T = \frac{1}{2} \dim \Gorbit_\lambda 
  \\ &= 
  \begin{cases} \displaystyle
    \frac{1}{2} \Bigl(
      \frac{1}{2} n(2n-1) 
      - \frac{1}{2} \sum_i (\transpose{\lambda}_i)^2 
      + \frac{1}{2} \sum_{\text{$i$:odd}} m_i
    \Bigr)
    & (\frakg = \frako_n),
    \\ \displaystyle
    \frac{1}{2} \Bigl(
      \frac{1}{2} n(2n+1) 
      - \frac{1}{2} \sum_i (\transpose{\lambda}_i)^2 
      - \frac{1}{2} \sum_{\text{$i$:odd}} m_i
    \Bigr)
    & (\frakg=\fraksp_n , \; n : \text{even}),
  \end{cases}
\end{align*}
where $\transpose{\lambda} = (\transpose{\lambda}_1, \transpose{\lambda}_2, \ldots)$
is the transposed partition  of $\lambda$,
and $m_i$ denotes the multiplicity of $i$ in $\lambda$.
\end{lemma}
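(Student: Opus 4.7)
The plan is to prove the two equalities in the lemma essentially independently: first the Lagrangian halving $\dim \Korbit_T = \tfrac{1}{2}\dim \Gorbit_\lambda$, and then the explicit formula for $\dim \Gorbit_\lambda$ in terms of $\lambda$. For the halving I would invoke the equidimensional Lagrangian decomposition of $\Gorbit_\lambda \cap \fraks$ already used in \S 2 via \cite[Corollary~5.20]{Vogan.1991}, which underlies the decomposition \eqref{eqn:G.orbit.decompose.into.K.orbits}: every irreducible component $\Korbit_T$ has dimension exactly $\tfrac{1}{2}\dim \Gorbit_\lambda$. Alternatively one can argue directly: for $e \in \Korbit_T$ the centralizer $\frakg^e$ is $\theta$-stable, so $\frakg^e = \frakk^e \oplus \fraks^e$, and combined with $[\frakk,\fraks] \subset \fraks$, $[\fraks,\fraks] \subset \frakk$ and $e \in \fraks$, a short rank computation yields $\dim[\frakk,e] = \dim[\fraks,e]$, hence $\dim\Korbit_T = \tfrac{1}{2}\dim\Gorbit_\lambda$. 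In particular this common value depends only on $\lambda$ and not on the signed Young diagram $T$ refining it.

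For the explicit formula I would reduce to the classical centralizer computation in the complex Lie algebras $\frako_n$ and $\fraksp_n$: for a nilpotent $e$ of Jordan type $\lambda$ one has
\[
\dim \frakg^e \;=\; \tfrac12 \left(\sum_i (\transpose{\lambda}_i)^2 \;\mp\; \sum_{i\,\text{odd}} m_i\right),
\]
with the minus sign for $\frakg = \frako_n$ and the plus sign for $\frakg = \fraksp_n$ ($n$ even). This is proved by choosing a Jordan basis adapted to the invariant bilinear form, decomposing $\frakg^e$ block-by-block into contributions from pairs of equal-length Jordan blocks and from single self-dual blocks, and summing; the self-dual blocks are precisely what produce the correction $\pm \sum_{i\,\text{odd}} m_i$. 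Combined with the identity $\dim \Gorbit_\lambda = \dim \frakg - \dim \frakg^e$ and then halving, this yields the displayed formula. Alternatively one may simply cite \cite[Corollary~6.1.4]{Collingwood.McGovern.1993}, which records precisely these identities.

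The main obstacle, should one insist on a self-contained proof, is the careful bookkeeping of odd-versus-even Jordan blocks inside $\frako_n$ and $\fraksp_n$: one must verify that the $\pm \sum_{i\,\text{odd}} m_i$ correction is accounted for exactly by the dimension contribution of each self-dual Jordan block under the invariant form. This is routine but is the only place where the type (orthogonal vs.\ symplectic) enters, and once done it handles all four types BDI, CI, CII, DIII uniformly through the two underlying complex Lie algebras $\frako_n$ and $\fraksp_n$.
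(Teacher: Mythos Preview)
Your proposal is correct and aligns with the paper's own proof, which simply cites \cite[Corollary~6.1.4]{Collingwood.McGovern.1993}. You go further by also sketching the Lagrangian halving via \cite[Corollary~5.20]{Vogan.1991} and the centralizer computation behind the Collingwood--McGovern formula, but the core reference is the same.
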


\begin{proof}
See \cite[Corollary~6.1.4]{Collingwood.McGovern.1993}, for example.
\end{proof}

\begin{lemma}
\label{lem:BDI-CI-CII-DIII-codim-1-inclusion}
The closure relations of nilpotent $K$-orbits on $\fraks$
of codimension one are given as follows.
\begin{rbenumerate}
\item 
For types {\upshape{}BDI} and {\upshape{}CI},
all the cover relations in Table~\ref{table:Ohta-Table-V-2}
are of codimension one
if and only if $T$ has no rows of length
between the longer length in $\overline{T}$
and the shorter length in $\overline{T}$ $($exclusive$)$.

\item 
For types {\upshape{}CII} and {\upshape{}DIII},
the closure relations of codimension one are
Case $(1)$ of {\upshape{}CII} and Case $(1)$ of {\upshape{}DIII} in Table~\ref{table:Ohta-Table-V-1}
such that $T$ has no rows of length
between the longer length in $\overline{T}$
and the shorter length in $\overline{T}$ $($exclusive$)$.
\end{rbenumerate}
\end{lemma}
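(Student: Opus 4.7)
The plan is to reduce the lemma to a direct computation of $\dim \Korbit_T - \dim \Korbit_S$ using Lemma~\ref{lem:dimension-formula}, applied case by case to Ohta's tables. For a cover relation $\Korbit_S \subset \closure{\Korbit_T}$ with shape partitions $\mu$ (for $S$) and $\lambda$ (for $T$), Lemma~\ref{lem:dimension-formula} gives
\begin{equation*}
\dim \Korbit_T - \dim \Korbit_S
= \tfrac{1}{4}\Bigl( \textstyle\sum_i\bigl((\transpose{\mu}_i)^2 - (\transpose{\lambda}_i)^2\bigr) \,\pm\, \sum_{i:\mathrm{odd}}(m_i^{\mu} - m_i^{\lambda}) \Bigr),
\end{equation*}
where the sign is $-$ for orthogonal and $+$ for symplectic. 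I would compute this quantity for every line in Tables~\ref{table:Ohta-Table-V-1} and~\ref{table:Ohta-Table-V-2}, and check when it equals $1$.

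Next, I would isolate the effect of the ``gap rows'' of $T$, namely the common rows of $S$ and $T$ whose length lies strictly between the longer and shorter lengths appearing in $\overline{T}$. Since $\mu$ and $\lambda$ agree outside $\overline{S}$ and $\overline{T}$, the column-height difference $\transpose{\mu}_i - \transpose{\lambda}_i$ is nonzero only in the column-index range between those two lengths, and is shifted by $\pm 1$ per gap row of length in that range. Consequently, the column-sum contribution to the codimension formula increases by a computable positive amount for each gap row. The cleanest way to present this is: first assume $T$ has no gap rows, compute the codimension directly from the altered rows of $\overline{T}$ and $\overline{S}$, and then verify that reintroducing a gap row of length $\ell$ strictly increases the codimension, so codimension one forces the absence of gap rows.

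With gap rows absent, the per-case computation is mechanical: in each case the transposed shape changes in at most two columns, and the odd-multiplicity correction is easy to track because the parities of the altered row lengths are recorded in the table. For BDI and CI (Table~\ref{table:Ohta-Table-V-2}), each of the ten cases yields exactly $1$ after simplification—this confirms part (i). For CII and DIII (Table~\ref{table:Ohta-Table-V-1}), Case~(1) is the only case in which the shape changes by a single box being transferred between two adjacent column heights; in Cases~(2)--(5), either the row lengths change by $2$ (not $1$), or two rows in the pair-structured primitive are altered simultaneously, so the column-sum contribution is always at least $2$ in the relevant formula, giving codimension $\ge 2$. This establishes part (ii).

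The main obstacle is bookkeeping: one must carefully track both the column-sum change and the $\sum_{i\text{ odd}} m_i$ correction in each of the roughly thirty cases, paying attention to whether the altered rows have odd or even length (which affects $m_i^\lambda - m_i^\mu$) and to which columns are actually shifted. A potential pitfall is double-counting when $\overline{T}$ contains two rows of lengths differing by $1$ and the parity correction partially cancels the column-square change; the sign conventions for $\frako_n$ versus $\fraksp_n$ also differ, so each family must be checked separately. Once the bookkeeping is set up uniformly (for instance by expressing each cover relation as a sequence of elementary box-moves and reading off the change of $\transpose{\lambda}$ directly), the verification is routine and gives the claimed dichotomy between BDI/CI and CII/DIII.
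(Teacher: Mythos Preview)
Your approach is correct and is precisely the method the paper intends: the lemma is stated without proof, but the analogous type AIII result (Lemma~\ref{lem:AIII-closure-rel-codim=1}) is proved in exactly this way, by computing codimensions from the dimension formula applied to Ohta's cover relations. Your organization---first verifying each case under the no-gap-row hypothesis, then showing that each gap row strictly increases the codimension---is the right way to structure the bookkeeping, and your identification of why CII/DIII Cases~(2)--(5) fail (the parity correction no longer compensates for the column-square contribution of $8$, unlike in Case~(1)) is accurate.
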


\begin{lemma}
\label{lem:BDI-CI-CII-DIII-adjacent-relation}
For symmetric pairs $(G,K)$ of types {\upshape{}X} $=$ {\upshape{}BDI}, {\upshape{}CI}, {\upshape{}CII}, {\upshape{}DIII},
two nilpotent $K$-orbits
$\Korbit_T$ and $\Korbit_{T'}$
$(T, T' \in \sydx(\lambda; p,q))$
are adjacent in codimension one
if and only if $\overline{T}$ and $\overline{T'}$ are of the following form,
and $T$ has no rows of length
between the longer length in $\overline{T}$ 
and the shorter length in $\overline{T}$ $($exclusive$)$.

\bgroup\tiny
\noindent\hfil
$\begin{array}{lllllllll}
  \quad \mathrm{BDI}
  \\ \hline
  \quad \overline{T} & ~\qquad \overline{T'} 
  \\ \hline
  \begin{array}{l} \baDDab[2u+1] \\\abDba[2v+1] \end{array} & 
  \begin{array}{l} \abDDba[2u+1] \\\baDab[2v+1] \end{array}
  (u > v \ge 0)
\end{array}$ 
\quad
$\begin{array}{lllllllll}
  \quad \mathrm{CI}
  \\ \hline
  \quad \overline{T} & ~\qquad \overline{T'} 
  \\ \hline
  \begin{array}{l} \baDDba[2u] \\\abDab[2v] \end{array} & 
  \begin{array}{l} \abDDab[2u] \\\baDba[2v] \end{array}
  (u > v \ge 0)
\end{array}$
\bigskip \\ \hfil
$\begin{array}{lllllllll}
  \quad \mathrm{CII}
  \\ \hline
  \quad \overline{T} & ~\qquad \overline{T'} 
  \\ \hline
  \begin{array}{l} \abDDba[2u+1]\\\abDDba\\\baDab\\\baDab[2v+1] \end{array} & 
  \begin{array}{l} \baDDab[2u+1]\\\baDDab\\\abDba\\\abDba[2v+1] \end{array}
  (u > v \ge 0)
\end{array}$
\quad
$\begin{array}{lllllllll}
  \quad \mathrm{DIII}
  \\ \hline
  \quad \overline{T} & ~\qquad \overline{T'} 
  \\ \hline
  \begin{array}{l} \abDDab[2u]  \\\abDDab\\\baDba\\\baDba[2v] \end{array} & 
  \begin{array}{l} \baDDba[2u]  \\\baDDba\\\abDab\\\abDab[2v] \end{array}
  (u > v \ge 0)
\end{array}$
\egroup
\end{lemma}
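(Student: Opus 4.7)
The plan is to follow the strategy used for type AIII in Lemma~\ref{lem:AIII-adjacent-in-codim-1}. Two orbits $\Korbit_T$ and $\Korbit_{T'}$ of the same shape $\lambda$ are adjacent in codimension one precisely when there exists a single $K$-orbit $\Korbit_S$ with $\Korbit_S \subset \closure{\Korbit_T} \cap \closure{\Korbit_{T'}}$ and $\dim \Korbit_S = \dim \Korbit_T - 1 = \dim \Korbit_{T'} - 1$. So the task is to locate, within the tables of codimension-one cover relations, all pairs of rows that share the same $S$ but have distinct targets $T \ne T'$ of the same shape.

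First I would invoke Lemma~\ref{lem:BDI-CI-CII-DIII-codim-1-inclusion} to restrict to cover relations of codimension one. For types CII and DIII, only Case~(1) of Table~\ref{table:Ohta-Table-V-1} qualifies. In each of these, $\overline{S}$ has four rows, and the same $S$ admits two distinct readings as a codimension-one cover-target: one produces the $\overline{T}$ of Case~(1) as written, and the other — obtained by reassigning which rows of $S$ are considered "common" with the target — produces the $\overline{T'}$ with the sign patterns swapped as in the lemma statement. Verifying that these two readings exhaust the possibilities yields the result for CII and DIII, and the no-intermediate-rows condition transfers verbatim from Lemma~\ref{lem:BDI-CI-CII-DIII-codim-1-inclusion}.

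For types BDI and CI, each of the ten cases in Table~\ref{table:Ohta-Table-V-2} yields codimension-one cover relations (under the no-intermediate-rows hypothesis). The work reduces to scanning unordered pairs of cases per type and identifying those whose $\overline{S}$ diagrams can be completed to a single signed Young diagram $S$ whose two distinct codimension-one covers are of the same shape $\lambda$. Parity constraints on the row lengths of $\overline{S}$ — e.g., odd-length rows cannot pair with even-length rows in a single $S$ of fixed shape $\lambda$ — together with the requirement that $\overline{T}$ and $\overline{T'}$ share the same shape eliminate all but the single configuration claimed in the lemma.

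The main obstacle will be the careful bookkeeping in BDI and CI, where forty-five pairings of cases must in principle be examined. The argument needs a clean parity/sign-pattern invariant to cut down this list systematically, and to confirm that the surviving pairings produce exactly the two-row pattern stated and nothing more. By comparison, CII and DIII are essentially routine since only one case is involved and the matching amounts to an internal symmetry of Case~(1); likewise the no-intermediate-rows side condition follows automatically from the corresponding clause in Lemma~\ref{lem:BDI-CI-CII-DIII-codim-1-inclusion}.
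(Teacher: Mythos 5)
Your strategy matches the paper's: reduce via Lemma~\ref{lem:BDI-CI-CII-DIII-codim-1-inclusion} to codimension-one cover relations and then pair up entries of the tables sharing a common source $S$ with targets of the same shape, with CII/DIII handled by the $a\leftrightarrow b$ symmetry in Case (1) of Table~\ref{table:Ohta-Table-V-1}. The bookkeeping you flag as the ``main obstacle'' for BDI/CI is exactly the part the paper's proof pins down: for $u=v+1$ the relevant pairing is Case~(1) of Table~\ref{table:Ohta-Table-V-2} paired with itself under $a\leftrightarrow b$, while for $u>v+1$ it is Case~(3) paired against Case~(8); all other pairings are ruled out because they cannot share a common $\overline{S}$ (row lengths or end-signs fail to match), which is the parity/sign invariant you were looking for.
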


\begin{proof}
For CII and DIII the assertion easily follows from 
Lemma~\ref{lem:BDI-CI-CII-DIII-codim-1-inclusion} (2).
For BDI and CI the assertion also follows from 
Lemma~\ref{lem:BDI-CI-CII-DIII-codim-1-inclusion} (1),
although we should mention that when $u=v+1$ we use 
Case (1) of Table~\ref{table:Ohta-Table-V-2},
and when where $u>v+1$ we use 
Cases (3) and (8) of Table~\ref{table:Ohta-Table-V-2}.
\end{proof}

From Lemma~\ref{lem:BDI-CI-CII-DIII-adjacent-relation}
we finally obtain the structure of the orbit graph.

\begin{theorem}[Description of orbit graph]
Let ${\rm X} = {\rm BDI}, {\rm CI}, {\rm CII}$ or ${\rm DIII}$.
Let $\lambda \in \yd_{\rm X}(n)$ be a partition, 
where $ n = p + q $ is the size of the matrix group $ G $ given in Table~\ref{table:ssp.in.this.paper} 
$($$p=q$ if ${\rm X} = {\rm CI}$ or ${\rm DIII};$ 
  $p$ and $q$ are even if ${\rm X} = {\rm CI}$$)$.
Then the orbit graph  $\Ograph_K(\Gorbit_\lambda)$ is described as follows.
The vertices are
\begin{align*}
  \{ \Korbit_T \mid T \in \sydx(\lambda;p,q) \},
\end{align*}
and for two vertices $\Korbit_T$ and $\Korbit_{T'}$,
there is an edge if and only if $\pi(T) - \pi(T')$ belongs to
\begin{align*}
  & \{ \pm(e_r - e_{r+1}) \mid 1 \le r \le k-1 \} \cup \{ \pm e_k \} & &
  \text{when ${\rm X} = {\rm BDI}, {\rm CI}$},
  \\
  & \{ \pm 2(e_r - e_{r+1}) \mid 1 \le r \le k-1 \} \cup \{ \pm 2e_k \} & & 
  \text{when ${\rm X} = {\rm CII}, {\rm DIII}$},
\end{align*}
where $\pi: \sydx(\lambda;p,q) \to \RR^k$ is the composite of
the natural inclusion $\sydx(\lambda;p,q) \to \syd(\lambda;p,q)$
and $\pi: \syd(\lambda;p,q) \to \RR^k$
defined in \eqref{eq:definition.of.pi}.
\end{theorem}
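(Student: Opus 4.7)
The plan is to mirror the proof of Theorem~\ref{thm:AIII-description-of-graph}, with Lemma~\ref{lem:BDI-CI-CII-DIII-adjacent-relation} now playing the role that Lemma~\ref{lem:AIII-adjacent-in-codim-1} played there. The vertex set is immediate from the definition of the orbit graph and from the classification of nilpotent $K$-orbits by signed Young diagrams of type $\mathrm{X}$, so the entire content of the theorem is to translate the two-row diagrammatic description of adjacency provided by Lemma~\ref{lem:BDI-CI-CII-DIII-adjacent-relation} into a linear relation between $\pi(T)$ and $\pi(T')$.

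For each of the four types I would fix a pair $(T, T')$ satisfying the hypothesis of Lemma~\ref{lem:BDI-CI-CII-DIII-adjacent-relation} and read off, length by length, how the starting signs of the affected rows change. In type BDI, two rows of odd lengths $2u+1$ and $2v+1$ flip their starting signs in opposite directions, each contributing $\pm 1$ to its coordinate of $\pi$, so that $\pi(T) - \pi(T') = \pm(e_r - e_{r+1})$; in type CI the same happens with two rows of even lengths $2u$ and $2v$. In types CII and DIII the primitives come in like-signed pairs, so two rows of each of the two affected lengths flip simultaneously, producing a change of $\pm 2(e_r - e_{r+1})$. In all four types, the proviso ``$T$ has no rows of length between the longer and the shorter'' translates exactly into the statement that the two affected distinct row lengths of $\lambda$ are consecutive in the decreasing list $i_1 > i_2 > \cdots > i_k$, which justifies the indices $r$ and $r+1$.

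The $\pm e_k$ (respectively $\pm 2 e_k$) edges arise only in types CI and DIII, as the degenerate case $v = 0$ of Lemma~\ref{lem:BDI-CI-CII-DIII-adjacent-relation}: the shorter row then has formal length $2v = 0$ and so disappears, and the move reduces to a single sign flip (CI) or a simultaneous flip of a pair of rows (DIII) of length $2u$, contributing $\pm e_k$ or $\pm 2 e_k$ precisely when $2u$ is the smallest row length of $\lambda$. In types BDI and CII the corresponding shorter length is $2v+1 \ge 1$, so no such degeneration occurs; in fact the parity constraint on the signature $(p,q)$, together with the primitive-pairing constraints, rules out any pair $T, T'$ with $\pi(T) - \pi(T') = \pm e_k$ or $\pm 2 e_k$, so these contributions to the claimed set are vacuous and are kept only for uniformity across the four types. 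The main obstacle in executing the argument is the bookkeeping required to handle the four types separately: for each type one must keep track of which coordinates of $\pi$ are free and which are pinned down by the primitive pairings, and one must verify, for the converse direction, that every allowed difference vector is realized by an explicit sign-flip compatible with both the primitive structure and the fixed signature.
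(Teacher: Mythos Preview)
Your proposal is correct and is exactly the argument the paper has in mind. The paper itself gives no proof environment for this theorem; it simply writes ``From Lemma~\ref{lem:BDI-CI-CII-DIII-adjacent-relation} we finally obtain the structure of the orbit graph'' and then states the result, followed by a Remark recording precisely the parity restrictions on $i_r,i_{r+1}$ (and the $i_k$ even condition for the $\pm e_k$, $\pm 2e_k$ edges) that you worked out. Your plan---translate the row-flip description of Lemma~\ref{lem:BDI-CI-CII-DIII-adjacent-relation} into $\pi$-coordinates, note that the ``no rows in between'' clause forces the two affected lengths to be consecutive among $i_1>\cdots>i_k$, and observe that the primitive pairings pin certain coordinates so that the unused vectors in the stated edge set are vacuous---is the intended argument and mirrors the AIII proof verbatim. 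The only refinement worth adding is that $\pi$ is injective on $\sydx(\lambda;p,q)$ (as it already is on $\syd(\lambda;p,q)$), so a prescribed difference $\pi(T)-\pi(T')$ determines the pair $(T,T')$ uniquely and the converse direction reduces immediately to checking the parity constraints you describe.
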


\begin{remark}
The set of possible edges is a proper subset of the set of vectors listed in the above theorem.  
In fact, possible edges are 
$\pm(e_r - e_{r+1})$ with $i_r$ and $i_{r+1}$ odd for ${\rm X} = {\rm BDI}$, 
and $\pm(e_r - e_{r+1})$ with $i_r$ and $i_{r+1}$ even
together with $\pm e_k$ if $i_k$ is even for ${\rm X} = {\rm CI}$.
For ${\rm X} = {\rm CII}$ and ${\rm DIII}$,
possible edges are twice the possible edges of BDI and CI,
respectively.
\end{remark}

\subsection{Induction of subgraphs}
We use the operation of removing two successive columns of the same length,
and the induction $\gind$ of graphs,
which are introduced in Subsection~\ref{subsec:AIII-induction.of.subgraph}.
It is easy to see that these two operations
preserve the type X
(${\rm X} = {\rm BDI}, {\rm CI}, {\rm CII}, {\rm DIII}$)
of the signed Young diagrams.
\begin{equation*}
\xymatrix @R=1pt {
\syd(\lambda';p',q') \ar[r]^{\gind} & \syd(\lambda; p, q) \\
\rotatebox{90}{$ \subset $} & \rotatebox{90}{$ \subset $} \\
\sydx(\lambda';p',q') \ar[r]^{\gind} & \sydx(\lambda; p, q) \\
}
\end{equation*}
We can conclude the following two lemmas 
by similar argument as in the case of AIII.

\begin{lemma}
\label{lem:number-of-connected-components-coincide-BDI-CI-CII-DIII}
Let ${\rm X} = {\rm BDI}, {\rm CI}, {\rm CII}$ or ${\rm DIII}$,
and $\lambda \in \yd_{\rm X}(n)$.
Let $\lambda' \in \yd_{\rm X}(n-2h)$ be the Young diagram
obtained by removing successive two columns of the same height $h$.
Then the number of connected components of $\Ograph_K(\Gorbit_\lambda)$
coincides with that of $\Ograph_{K'}(\Gporbit_{\lambda'})$,
where $(G,K)$ and $(G',K')$ are as follows.
\begin{gather*}
  \begin{array}{llllll}
    & (G,K) & (G',K')
    \\ \hline
    \mathrm{BDI} & (O_{p+q},O_p \times O_q) & (O_{p+q-2h},O_{p-h} \times O_{q-h}) 
    \\ 
    \mathrm{CI} & (Sp_{2p},GL_{p}) & (Sp_{2p-2h},GL_{p-h}) 
    \\ 
    \mathrm{CII} & (Sp_{p+q},Sp_{p}\times Sp_{q}) & (Sp_{p+q-2h},Sp_{p-h}\times Sp_{q-h}) 
    \\ 
    \mathrm{DIII} & (O_{2p},GL_{p}) & (O_{2p-2h},GL_{p-h}) 
  \end{array}
\end{gather*}
\end{lemma}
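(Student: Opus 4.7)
The plan is to parallel the proof of Lemma~\ref{lem:number-of-connected-components-coincide} for type AIII. In that proof, one showed that the parameter set $P(\lambda;p,q)$ of \eqref{eq:parity.condition}, which enumerates the connected components of $\Ograph_K(\Gorbit_\lambda)$, depends only on parity data that are invariant under removing two adjacent columns of the same height. For each type $\mathrm{X} \in \{\mathrm{BDI}, \mathrm{CI}, \mathrm{CII}, \mathrm{DIII}\}$, I would introduce an analogous parameter set $P_{\mathrm{X}}(\lambda;p,q)$ enumerating the connected components of $\Ograph_K(\Gorbit_\lambda)$, and verify that the reduction $\lambda \mapsto \lambda'$ preserves its defining constraints.

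First I would record what is preserved by removing two columns of common height $h$ from $\lambda$: (i)~the parity of every row, because only the top $h$ rows are affected and each of them loses exactly two boxes; (ii)~the number of odd parts of $\lambda$; and (iii)~the signature difference $p - q = (p-h) - (q-h) = p' - q'$, because each affected row loses one $+$ and one $-$ (signs alternate within a row). One also needs to check that $\lambda' \in \yd_{\mathrm{X}}(n-2h)$, which is a direct inspection against the primitives in Table~\ref{table:primitives-of-syd}.

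Next, for each type X, I would describe the connected components of $\Ograph_K(\Gorbit_\lambda)$ in the style of Lemma~\ref{lem:complete-representatives}, using the edge description from Lemma~\ref{lem:BDI-CI-CII-DIII-adjacent-relation} and the primitive decomposition in Table~\ref{table:primitives-of-syd}. The resulting parameter set $P_{\mathrm{X}}(\lambda;p,q)$ is in each case cut out by bounds on multiplicities of row lengths and by a linear condition arising from the signature $(p,q)$ together with the count of odd parts of $\lambda$. Since these constraints depend only on items (i), (ii), (iii), the reduction $\lambda \mapsto \lambda'$, $(p,q) \mapsto (p-h,q-h)$ induces a bijection $P_{\mathrm{X}}(\lambda;p,q) \simeq P_{\mathrm{X}}(\lambda';p',q')$, giving the equality of the numbers of connected components.

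The hard part will be the classification of connected components itself. For type AIII this rested on Theorems~\ref{thm:AIII-description-of-graph} and \ref{thm:product-of-graph} together with Lemma~\ref{lem:complete-representatives}; I would carry out the analogous combinatorial classification for each of BDI, CI, CII, DIII. The extra bookkeeping caused by primitives that span two rows in types CII and DIII, and partly in BDI and CI, is the main subtlety; once the parameterization is in place, however, invariance under column removal is essentially formal, exactly as in the AIII case.
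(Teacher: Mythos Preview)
Your proposal is correct and follows exactly the approach the paper indicates: the paper simply states that this lemma (together with the next one) is concluded ``by similar argument as in the case of AIII,'' and your outline is precisely that similar argument, spelled out in more detail than the paper itself provides. The observations you isolate---invariance of row parities, of the number of odd parts, and of $p-q$ under removing two equal columns---are exactly what drives the AIII proof of Lemma~\ref{lem:number-of-connected-components-coincide}, and they carry over verbatim once the type-specific parameter set $P_{\mathrm X}(\lambda;p,q)$ is in hand.
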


In the above lemma, 
in the case of type CII or DIII, 
the length $ h $ of the removed columns is always even 
since all parts of $ \lambda $ occur with even multiplicity 
(see \S~\ref{subsection:number.of.nilpotent.orbits.types.BDI-DIII} and \cite[Proposition 2.2]{Trapa.2005}).

\begin{lemma}
Let $\lambda$ and $\lambda'$ be as above, 
and we use the induction $\gind$.
If $Z'$ is a connected component of $\Ograph_{K'}(\Gporbit_{\lambda'})$,
then $\gind(Z')$ is a connected component of $\Ograph_{K}(\Gorbit_{\lambda})$.
This correspondence establishes a bijection between
the connected components of $\Ograph_{K'}(\Gporbit_{\lambda'})$
and those of $\Ograph_{K}(\Gorbit_{\lambda})$.
\end{lemma}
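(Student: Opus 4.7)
The plan is to follow the template of Lemma~\ref{lemma:gind.of.connected.components} for type AIII, simply substituting the type-X parity constraints encoded in Table~\ref{table:primitives-of-syd} for those of AIII. By Lemma~\ref{lem:number-of-connected-components-coincide-BDI-CI-CII-DIII} the two orbit graphs $\Ograph_{K'}(\Gporbit_{\lambda'})$ and $\Ograph_{K}(\Gorbit_{\lambda})$ have the same number of connected components. Hence it suffices to show two things: (a) that $\ind(\{T'\}) \cap \ind(\{T''\}) = \emptyset$ whenever $T' \ne T''$ in $\sydx(\lambda'; p',q')$, so that $\{\ind(\{T'\})\}_{T'}$ partitions the vertex set of $\Ograph_K(\Gorbit_\lambda)$; and (b) that $\gind(Z')$ is connected for every connected component $Z'$. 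A surjectivity-plus-equal-cardinality argument then forces the correspondence to be bijective on components.

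Disjointness in (a) is immediate from the definition: restricting a diagram $T \in \sydx(\lambda; p, q)$ back to the sub-shape $\lambda'$ (by deleting the chosen pair of columns of height $h$) yields a well-defined element of $\sydx(\lambda'; p-h, q-h)$, modulo the permitted reorderings of rows of equal length. The removed columns automatically contribute the same number of $\fboxplus$ and $\fboxminus$ (for BDI/CI this uses that $h$ is arbitrary; for CII/DIII the primitives force $h$ even and the deletion to occur in a matched pair), so the resulting signed diagram remains of type X.

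For (b), I would first prove that $\gind(\{T'\})$ is connected for every single $T' \in \sydx(\lambda'; p',q')$, exactly as in cases (i)--(iii) of the proof of Lemma~\ref{lemma:gind.of.connected.components}: depending on whether the removed columns are wedged between two matched primitives, sandwiched between primitives of different length, or sit at the very left of $\lambda$, the image $\pi(\ind(\{T'\}))$ is either a single point, a segment along $\pm (e_j - e_{j+1})$, or a segment along $\pm e_k$ (with the segment length doubled in directions $\pm 2(e_j - e_{j+1})$ or $\pm 2 e_k$ in the CII/DIII cases). In each situation the edges described in the structure theorem for $\Ograph_K(\Gorbit_\lambda)$ connect the lattice points of the segment, so the subgraph is connected. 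Second, I would show that any edge between $T'$ and $T''$ in $\Ograph_{K'}(\Gporbit_{\lambda'})$ lifts to an edge between some $T_1 \in \ind(\{T'\})$ and some $T_2 \in \ind(\{T''\})$ in $\Ograph_K(\Gorbit_\lambda)$; this is an explicit case check on the edge direction, carried out by choosing a split $(a_j, a_{j+1})$ with $a_j + a_{j+1}$ equal to the corresponding inherited coordinate, mirroring the AIII argument.

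The main obstacle is the bookkeeping for types CII and DIII, where the primitives occupy pairs of rows and every part of $\lambda$ has even multiplicity. Consequently the edge set is described in units of $\pm 2(e_r-e_{r+1})$ and $\pm 2 e_k$, and the allowed splits must respect the pairing of rows within each primitive. This roughly doubles the number of sub-cases relative to AIII, but each is resolved by the same construction: an explicit compatible choice of the sign pattern in the deleted columns. Once this case analysis is dispatched, the bijection between connected components follows as in the AIII proof.
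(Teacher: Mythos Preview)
Your proposal is correct and matches the paper's approach exactly: the paper does not give an independent proof of this lemma but simply states that it follows ``by similar argument as in the case of AIII,'' and you have spelled out precisely that transfer, including the necessary adjustments for the doubled edge vectors and paired-row primitives in types CII and DIII.
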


\subsection{Number of connected components}
To answer Problem~\ref{prob:general-case}
(\ref{prob:item:number.of.connected.components.of.graph}),
as in the case of AIII,
it suffices to consider the Young diagram with columns of different lengths
thanks to
Lemma~\ref{lem:number-of-connected-components-coincide-BDI-CI-CII-DIII}.

\begin{theorem}
\label{thm:number.of.connected.components-BDI-CI-CII-DIII}
Let ${\rm X} = {\rm BDI}, {\rm CI}, {\rm CII}$ or ${\rm DIII}$.
Let $\lambda \in \yd_{\rm X}(n)$,
and $n = p+q$ 
$($we put $p=q$ if ${\rm X} = {\rm CI}$ or ${\rm DIII}; $
  $p$ and $q$ are even if ${\rm X} = {\rm CII}$$)$.
\begin{rbenumerate}
\item
\label{thm:number.of.connected.components-BDI-CI-CII-DIII:1}
The orbit graph $ \Ograph_K(\Gorbit_{\lambda}) $ consists of a single vertex if and only if 
\begin{itemize}
  \item 
  For ${\rm X} = {\rm BDI}, {\rm CII}$; 
  the number of odd parts in $ \lambda $ is equal to $ p - q $, or 
  odd parts in $ \lambda $ have the same length.
  
  \item 
  For ${\rm X} = {\rm CI}, {\rm DIII}$; 
  the parts in $ \lambda $ are all odd.
\end{itemize}
\item
\label{thm:number.of.connected.components-BDI-CI-CII-DIII:2}
Let us assume that there are at least two vertices in $ \Ograph_K(\Gorbit_{\lambda}) $.  
Then the orbit graph $\Ograph_K(\Gorbit_\lambda)$ has no edges if and only if 
\begin{itemize}
  \item 
  each column length $h$ of $\lambda$ occurs odd times,
  or occurs even times and $\lambda_h$ is even,
  when ${\rm X} = {\rm BDI}, {\rm CII}$.
  \item 
  each column length $h$ of $\lambda$ occurs odd times,
  or occurs even times and $\lambda_h$ is odd,
  when ${\rm X} = {\rm CI}, {\rm DIII}$.
\end{itemize}
In particular,
if $\lambda$ has distinct column lengths,
then $\Ograph_K(\Gorbit_\lambda)$ has no edge.
\item
\label{thm:number.of.connected.components-BDI-CI-CII-DIII:3}
Assume that $\lambda$ has distinct column lengths.
In this case the number of the connected components
$($i.e., the number of vertices$)$ of the orbit graph $\Ograph_K(\Gorbit_\lambda)$
is given by
\begin{align}
\label{eq:number-of-conn-comp-BDI}
& \prod_{\substack{ 1\le s\le m\\ \text{$\lambda_{k_s} : $ odd}}} (1+t+\cdots+t^{k_s-k_{s-1}}) 
\Big|_{t^d}
&& ({\rm X} = {\rm BDI}),
\\
\label{eq:number-of-conn-comp-CI}
& \prod_{\substack{ 1\le s\le m\\ \text{$\lambda_{k_s} : $ even}}} (1 + k_s - k_{s-1})
&& ({\rm X} = {\rm CI}),
\\
\label{eq:number-of-conn-comp-CII}
& \prod_{\substack{ 1\le s\le m\\ \text{$\lambda_{k_s} : $ odd}}} (1+t+\cdots+t^{(k_s-k_{s-1})/2}) 
\Big|_{t^{d/2}}
&& ({\rm X} = {\rm CII}),
\\
\label{eq:number-of-conn-comp-DIII}
& \prod_{\substack{ 1\le s\le m\\ \text{$\lambda_{k_s} : $ even}}} (1 + \frac{k_s - k_{s-1}}{2})
&& ({\rm X} = {\rm DIII}),
\end{align}
where $ k_0 = 0 < k_1 < k_2 < \cdots < k_m$ are the {\upshape(}distinct{\upshape)} column lengths
of $\lambda$,
$f(t) \big|_{t^d}$ denotes the coefficient of $t^d$,
and $d$ is the number given by 
\begin{equation}
d := \frac{p-q+\#(\text{odd parts of $ \lambda $})}{2} .
\end{equation}
The number $d$ is always an integer 
if $\sydx(\lambda;p,q)$ is non-empty,
and is an even integer if $\sydcii(\lambda;p,q)$ is non-empty.
\end{rbenumerate}
\end{theorem}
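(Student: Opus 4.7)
The plan is to mirror the proof of Theorem~\ref{thm:number.of.connected.components.of.orbit.graph} (the AIII case), adapting at each step to the primitive structure of Table~\ref{table:primitives-of-syd}. Part~(3) is the substantive statement, and parts~(1) and~(2) will fall out as corollaries. For parts~(2) and~(3), Lemma~\ref{lem:number-of-connected-components-coincide-BDI-CI-CII-DIII} shows that the number of connected components is preserved when a pair of columns of equal height is removed from $\lambda$, so it suffices to treat $\lambda$ with distinct column lengths. Under this assumption, if $\transpose\lambda$ has $r$ distinct parts, the distinct row lengths of $\lambda$ are exactly $r, r-1, \ldots, 1$ (each with positive multiplicity), so consecutive distinct row lengths $i_s, i_{s+1}$ always have opposite parity and $i_k = 1$ is odd.

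For such $\lambda$, coordinatize $\sydx(\lambda; p, q)$ via the map $\pi$ of the structure theorem proved just above. The primitives of Table~\ref{table:primitives-of-syd} classify each coordinate $a_s = m_T^+(i_s)$ as forced or free. For BDI, an even primitive pairs two rows of the same even length with opposite starting signs, forcing $a_s = (k_s - k_{s-1})/2$ when $\lambda_{k_s}$ is even; an odd primitive is a single row with free $\pm$ choice, giving $a_s \in [0, k_s - k_{s-1}]$ free for odd $\lambda_{k_s}$, subject to the parity condition \eqref{eq:parity.condition.for.pi}. For CI the odd/even roles swap, and the parity condition is automatically satisfied by the forced (odd) positions. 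For CII and DIII every primitive is a two-row block, so each free $a_s$ moves in steps of $2$; setting $b_s = a_s/2$ places the parameter on the same footing as BDI/CI with ranges and parity-target halved. Applying the edge description of the structure theorem and its accompanying Remark, the edges $\pm(e_r - e_{r+1})$ (or $\pm 2(e_r - e_{r+1})$ for CII, DIII) require consecutive positions of matching parity, while $\pm e_k$ (or $\pm 2 e_k$) requires $i_k$ of specific parity; under distinct column lengths neither occurs, so the orbit graph has no edges.

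Consequently each free-coordinate tuple satisfying the appropriate parity condition is its own connected component. A direct generating-function count --- a factor $1 + t + \cdots + t^{k_s - k_{s-1}}$ (or its halved variant for CII, DIII) for each free position, with coefficient of $t^d$ (or $t^{d/2}$) extracted in the types (BDI, CII) where the parity condition is non-vacuous --- yields formulas \eqref{eq:number-of-conn-comp-BDI} through \eqref{eq:number-of-conn-comp-DIII}. Parts~(1) and~(2) then follow by reading off when the product collapses to $1$ and when it equals the total number of vertices, respectively; for (2) the characterization of ``each column length occurs odd times (or with the appropriate parity of $\lambda_h$)'' is exactly the condition under which the column-removal reduction leaves every factor of the formula equal to~$1$. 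The main obstacle I expect is the careful case-by-case bookkeeping, particularly for CII and DIII where every primitive is a two-row block, to translate the primitive structure precisely into the claimed forced/free dichotomy and to verify that the parity target $d$ (resp.\ $d/2$) is an integer exactly when $\sydx(\lambda; p, q)$ is non-empty.
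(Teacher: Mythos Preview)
Your treatment of part~(3) is correct and is essentially the paper's argument: reduce via the primitive table to a forced/free dichotomy on the coordinates $a_s$, observe that under the distinct-column hypothesis consecutive row lengths $i_s,i_{s+1}$ have opposite parity (so the edge criterion of Lemma~\ref{lem:BDI-CI-CII-DIII-adjacent-relation} and the Remark following the structure theorem is never met), and then count free tuples by the obvious generating function.

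The gap is in your plan to obtain (1) and (2) ``as corollaries'' of (3). The formula in (3) counts \emph{connected components} under the hypothesis of distinct column lengths; it does not count vertices, and Lemma~\ref{lem:number-of-connected-components-coincide-BDI-CI-CII-DIII} preserves only the number of connected components under column removal, not the number of vertices or the presence of edges. In particular, your assertion that the condition in (2) ``is exactly the condition under which the column-removal reduction leaves every factor of the formula equal to~1'' is wrong: every factor equal to~1 means the graph has a \emph{single connected component}, which is the connectedness criterion of Corollary~\ref{corollary:connectedness.of.orbit.graph.type.BCD}, not the absence of edges. The paper proves (1) and (2) directly and you should too: for (1), count $\sydx(\lambda;p,q)$ straight from the primitive table (forced pairs contribute no freedom; the free rows contribute freedom unless the parity constraint pins them to an extreme value or there is only one free row-length); for (2), read off directly from Lemma~\ref{lem:BDI-CI-CII-DIII-adjacent-relation} (or the Remark after the structure theorem) that an edge exists iff some pair of consecutive distinct row lengths $i_r>i_{r+1}$ are both odd (BDI,\,CII) or both even with the $\pm e_k$ case included (CI,\,DIII), and translate this via $\lambda_h - \lambda_{h+1}=(\text{multiplicity of column length }h)$ into the stated condition on column-length multiplicities and the parity of $\lambda_h$.
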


\begin{proof}
\eqref{thm:number.of.connected.components-BDI-CI-CII-DIII:1}
By the description of primitives of the signed Young diagrams we get the desired conditions.  
Note that there is a unique filling of $ {+}/{-} $ signs for a pair of even (respectively odd) parts in the case of 
${\rm X} = {\rm BDI} $ or $ {\rm CII}$ 
(respectively ${\rm X} = {\rm CI} $ or $ {\rm DIII}$).

\eqref{thm:number.of.connected.components-BDI-CI-CII-DIII:2}
By the condition for two $K$-orbits to be adjacent in codimension one
(Lemma~\ref{lem:BDI-CI-CII-DIII-adjacent-relation}),
we immediately have the assertion.

\eqref{thm:number.of.connected.components-BDI-CI-CII-DIII:3}
Using the forms of primitives in Table~\ref{table:primitives-of-syd}
together with the condition on the number of signs,
we have the desired formula as in the case of AIII.
We omit the details.
\end{proof}

From this theorem the condition for an orbit graph to be connected is immediate.

\begin{corollary}
\label{corollary:connectedness.of.orbit.graph.type.BCD}
Under the same notation as in 
Theorem~\ref{thm:number.of.connected.components-BDI-CI-CII-DIII},
an orbit graph $\Ograph_K(\Gorbit_\lambda)$ is connected if and only if
\begin{itemize}
  \item {\upshape{}(BDI, CII)}
  there exists $ 0 \leq r \le s \leq \ell = \ell(\lambda) $ such that 
\begin{equation}  
    \label{eq:condition-for-connectedness-BDI-CII}
\begin{aligned} 
    &\text{
      $\lambda_1, \ldots, \lambda_r$ are even,
      $\lambda_{r+1}, \ldots, \lambda_s$ are odd,}\\
     &\quad\text{and $\lambda_{s+1}, \ldots, \lambda_{\ell}$ are even, }
  \end{aligned}
\end{equation}
or the number of odd parts in $ \lambda $ coincides with $ |p - q| $.

  \item {\upshape{}(CI, DIII)}
  there exists $ 0 \leq r \leq \ell = \ell(\lambda) $ such that 
  \begin{align} 
    \label{eq:condition-for-connectedness-CI-DIII}
    \text{
      $\lambda_1, \ldots, \lambda_r$ are odd,
      and $\lambda_{r+1}, \ldots, \lambda_{\ell}$ are even. }
  \end{align}
\end{itemize}
In particular, if $ \Gorbit_{\lambda} $ is an even nilpotent orbit, its orbit graph is connected.
\end{corollary}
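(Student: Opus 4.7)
The plan is to reduce via Lemma~\ref{lem:number-of-connected-components-coincide-BDI-CI-CII-DIII} to the case where $\lambda$ has distinct column lengths, and then extract the condition for the explicit formulas \eqref{eq:number-of-conn-comp-BDI}--\eqref{eq:number-of-conn-comp-DIII} of Theorem~\ref{thm:number.of.connected.components-BDI-CI-CII-DIII}~\eqref{thm:number.of.connected.components-BDI-CI-CII-DIII:3} to evaluate to $1$.  The key combinatorial device, which I would set up first, is a dictionary between the parity pattern of the original $\lambda$ and the parameter $m$ of the reduced partition.  Writing $c_h = \lambda_h - \lambda_{h+1}$ for the multiplicity of column length $h$ (with the convention $\lambda_{\ell+1}=0$), the column lengths of the reduced diagram are precisely those $h$ for which $c_h$ is odd.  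Since $c_h$ is odd exactly when $\lambda_h$ and $\lambda_{h+1}$ have opposite parity, $m$ equals the number of parity changes in the extended sequence $(\lambda_1,\ldots,\lambda_\ell,0)$.  A short case analysis --- carefully accounting for the change at the tail when $\lambda_\ell$ is odd --- then yields: $m \le 1$ iff $\lambda$ has the form (odd block)(even block) with possibly empty blocks; $m \le 2$ iff $\lambda$ has the form (even block)(odd block)(even block) with possibly empty blocks.

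For the types CI and DIII, every factor of the products \eqref{eq:number-of-conn-comp-CI} and \eqref{eq:number-of-conn-comp-DIII} is at least $2$, so the product equals $1$ if and only if the indexing set is empty, i.e.\ no $\lambda_{k_s}$ is even.  Because $\lambda_{k_s}=m-s+1$ alternates in parity with $s$, this is equivalent to $m \le 1$, which by the dictionary is precisely the (odd)(even) condition of the corollary.  For the types BDI and CII, the coefficient of $t^d$ (respectively $t^{d/2}$) in a product of truncated geometric series $\prod_{j=1}^{N}(1+t+\cdots+t^{n_j})$ with $n_j \ge 1$ equals $1$ in exactly two circumstances: either $N \le 1$, where the coefficient is automatically $1$ for any admissible exponent; or $N \ge 2$ with the exponent at one of the two extreme values $0$ or $\sum_j n_j$.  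The first case corresponds to $m \le 2$, hence by the dictionary to the (even)(odd)(even) condition.  The second case, using $\sum_j n_{s_j} = \#(\text{odd parts of }\lambda)$ together with $d=(p-q+\#\text{odd parts})/2$, translates to $|p-q| = \#(\text{odd parts of }\lambda)$.

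The final claim about even nilpotent orbits is then immediate: $\Gorbit_\lambda$ is even exactly when all parts of $\lambda$ share the same parity, and such $\lambda$ trivially fits the required pattern in each type (with two of the blocks empty).  The main technical obstacle in the write-up will be the bookkeeping for the parity change at the tail of the sequence; this is the step that produces the asymmetry between the (odd)(even) and (even)(odd) patterns in CI and DIII, where only the former yields connectedness.
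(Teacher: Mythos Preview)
Your proposal is correct and follows essentially the same route as the paper: reduce via Lemma~\ref{lem:number-of-connected-components-coincide-BDI-CI-CII-DIII} to the case of distinct column lengths, then determine when the formulas \eqref{eq:number-of-conn-comp-BDI}--\eqref{eq:number-of-conn-comp-DIII} equal~$1$, and translate back. Your write-up is in fact slightly more explicit than the paper's: the paper asserts without justification that the CI product being empty is equivalent to $m\le 1$ and that the BDI product having at most one factor is equivalent to $m\le 2$, whereas your observation $\lambda_{k_s}=m-s+1$ (for the reduced diagram) makes this transparent, and your parity-change dictionary cleanly handles the passage from the reduced diagram back to the original $\lambda$.
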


\begin{proof}
We give the proof for ${\rm X}={\rm CI}$ and ${\rm BDI}$,
and the proofs are similar when ${\rm X} = {\rm CII}$ and ${\rm DIII}$.

Suppose that ${\rm X}={\rm CI}$, and $\lambda$ has distinct column lengths.
Equation (\ref{eq:number-of-conn-comp-CI}) is equal to one
if and only if the product is empty.
This means that $m \le 1$, and $ \lambda $ has at most one column.

For any $\lambda \in \yd_{\rm CI}(n)$,
we use the operation of removing successive two columns of the same length.
By repeating this operation $\lambda$ is reduced to a diagram
with distinct column lengths,
and the numbers of connected components of the corresponding orbit graphs
are equal.
Diagrams $\lambda \in \yd_{\rm CI}(n)$ which are reduced to diagrams
with at most one column are
of the form~(\ref{eq:condition-for-connectedness-CI-DIII}).

Next suppose that ${\rm X}={\rm BDI}$,
and $\lambda$ has distinct column lengths.
Equation (\ref{eq:number-of-conn-comp-BDI}) is equal to one
if and only if the product has at most one factor,
or $d = 0$ or equal to the highest degree of the polynomial (\ref{eq:number-of-conn-comp-BDI}), 
namely the number of odd parts of $\lambda$.  

The first condition is equivalent to $m \le 2$, 
namely, $ \lambda $ has at most two columns.
We again use the operation of removing columns,
and diagrams $\lambda \in \yd_{\rm BDI}(n)$ which are reduced to diagrams
with at most two columns are 
of the form~(\ref{eq:condition-for-connectedness-BDI-CII}).
The second condition is equivalent to 
$ \#(\text{odd parts of $ \lambda $}) = |p - q| $.  

Thus we obtain the desired condition.
\end{proof}

\section{Associated varieties of Harish-Chandra modules}
\label{section:AV.of.HC.module}

Let us consider 
Problems~\ref{prob:general-case}~\eqref{prob:item:irrep.for.connected.G.orbit} and 
\eqref{prob:item:irrep.for.connected.component} 
in this section for the symmetric pair of type AIII.

We write $ GL_n = GL_n(\CC) $, and put 
\begin{align*}
G &= GL_{p+q} = GL_n \;\; (n = p + q), &
K &= GL_p \times GL_q, \\
\frakg &= \frakgl_{p+q} = \frakgl_n, &
\frakk &= \frakgl_{p} \oplus \frakgl_{q}, \\
\fraks &= \Mat(p, q; \CC) \oplus \Mat(q, p; \CC).
\end{align*}
We consider a real form $ G_{\RR} = U(p, q) $ of $ G $, an indefinite unitary group of signature $ (p, q) $, and 
$ K_{\RR} = U(p) \times U(q) $ a maximal compact subgroup.  
Then $ (G,K) $ is the complexification of the Riemannian symmetric pair $ (G_{\RR}, K_{\RR}) $.  
Roughly saying, finitely generated admissible representations of $ G_{\RR} $ can be understood 
once we know completely about Harish-Chandra $ (\frakg, K) $-modules.  

The main subject in this section is a Harish-Chandra $ (\frakg, K) $-module $ X $ and 
its associated graph $ \AVgraph(X) $ (see Introduction for definition).  
The goal is the following theorem.

\begin{theorem}
\label{thm:for-AIII}
Consider the symmetric pair $ (G, K) = (GL_n, GL_p \times GL_q ) \;\; (n = p + q) $ 
associated with $ G_{\RR} = U(p,q) $.  
Let $ \Gorbit $ be a nilpotent $ G $-orbit in $ \frakg $.
\begin{rbenumerate}
\item
If the orbit graph $\Ograph_K(\Gorbit)$ is connected,
then there exists an irreducible Harish-Chandra $ (\frakg, K) $-module $X$ which satisfies 
$\Ograph_K(\Gorbit) = \AVgraph(X)$.  
Namely, the associated variety is $\AV(X) = \closure{\Gorbit \cap \fraks} $ for this Harish-Chandra module.

\item
More generally,
for any connected component $Z \subset \Ograph_K(\Gorbit)$,
there exists an irreducible Harish-Chandra $ (\frakg, K) $-module $X$
such that $Z = \AVgraph(X)$.  
\end{rbenumerate}

\smallskip

In Case {\upshape(1)}, we can choose 
$ X $ as an irreducible degenerate principal series representation, and 
in Case {\upshape(2)}, 
$ X $ can be chosen as a parabolic induction from a certain derived functor module, 
which we will describe explicitly below.
\end{theorem}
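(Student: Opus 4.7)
The plan is to induct on $\lambda$, guided by the combinatorial induction $\gind$ of Lemma~\ref{lemma:gind.of.connected.components}: since $\gind$ gives a bijection between connected components of $\Ograph_{K'}(\Gporbit_{\lambda'})$ and those of $\Ograph_K(\Gorbit_\lambda)$ whenever $\lambda'$ arises from $\lambda$ by removing two successive columns of the same height $h$, the goal is to lift an irreducible Harish-Chandra module with associated graph $Z'$ to one with associated graph $\gind(Z')$. Representation-theoretically this should correspond to real parabolic induction from the maximal parabolic $P_\RR \subset G_\RR = U(p,q)$ with real Levi $L_\RR = GL_h(\CC) \times U(p-h, q-h)$. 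Since part (1) is the case in which $\Ograph_K(\Gorbit)$ has only one connected component, it suffices to prove part (2); part (1) will then follow on unwinding the construction and identifying the result as an irreducible degenerate principal series.

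For the base of the induction, reduce via $\gind$ to a partition $\lambda$ with distinct column lengths. By part~(2) of Theorem~\ref{thm:number.of.connected.components.of.orbit.graph} the orbit graph then has no edges, so each connected component is a single vertex $\{\Korbit_T\}$ for some $T \in \syd(\lambda;p,q)$. To each such $T$ I would attach a $\theta$-stable parabolic $\frakq_T$ whose Levi is the block-diagonal $GL$-subgroup read off from the sign pattern of the columns of $T$, and set $X = A_{\frakq_T}(\chi)$ for a character $\chi$ of the Levi in the good range. Using the combinatorial description of associated varieties of derived functor modules for $U(p,q)$ (Trapa, Yamashita), one obtains $\AV(X) = \closure{\Korbit_T}$, whence $\AVgraph(X) = \{\Korbit_T\}$. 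In the special sub-case where $\Ograph_K(\Gorbit)$ is additionally connected, Corollary~\ref{cor:AIII-condition-of-connectedness} forces $\lambda$ to have a shape for which this derived functor module coincides with an irreducible degenerate principal series attached to the real parabolic whose Levi matches $\transpose{\lambda}$.

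For the inductive step, assume $X'$ is an irreducible $(\frakg',K')$-module with $\AVgraph(X') = Z'$, and given a connected component $Z = \gind(Z') \subset \Ograph_K(\Gorbit_\lambda)$ take
\begin{equation*}
  X \;=\; \Ind_{P_\RR}^{G_\RR}\bigl(\chi \boxtimes X'\bigr)
\end{equation*}
for a one-dimensional character $\chi$ of $GL_h(\CC)$ in the good range. Writing $\frakn$ for the nilradical of the complexified parabolic $\frakp = \Lie(P_\RR)_{\CC}$, the aim is to establish
\begin{equation*}
  \AV(X) \;=\; \closure{K \cdot \bigl(\AV(X') + \frakn \cap \fraks\bigr)}
\end{equation*}
and to show that the irreducible components of the right-hand side are precisely $\{\closure{\Korbit_T} \mid T \in \ind(Z')\}$. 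By Lemma~\ref{lemma:gind.of.connected.components} this set of $K$-orbits is exactly the vertex set of $\gind(Z')$, so combined with irreducibility of $X$ in the good range we conclude $\AVgraph(X) = \gind(Z') = Z$.

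The principal obstacle is the associated-variety identification above: the inclusion $\AV(X) \subseteq \closure{K\cdot(\AV(X') + \frakn\cap\fraks)}$ is the standard estimate for induced representations, but the reverse inclusion and the decomposition into $K$-orbit closures indexed exactly by $\ind(Z')$ require a careful matching of geometric $K$-orbit induction with the signed-Young-diagram rule of Section~\ref{subsec:AIII-induction.of.subgraph}. I expect this to reduce to a case analysis paralleling Cases~(i)--(iii) in the proof of Lemma~\ref{lemma:gind.of.connected.components}, with extraneous components ruled out by the genericity of $\chi$. A secondary subtlety is ensuring that the base-case assignment $T \mapsto \frakq_T$ and the inductive construction are compatible, so that iterating the procedure respects the product decomposition of $Z_\mbfp$ given by Theorem~\ref{thm:product-of-graph}.
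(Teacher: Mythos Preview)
Your overall architecture matches the paper's: reduce via repeated removal of column-pairs to a $\lambda'$ with distinct column lengths, realize each vertex there by a derived functor module (the paper cites Barbasch--Vogan rather than building $\frakq_T$ by hand, but this is the same idea), and then climb back up by real parabolic induction from $P_\RR$ with Levi $GL_h(\CC)\times U(p',q')$, invoking Lemma~\ref{lemma:gind.of.connected.components} at each step.

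The genuine gap is in your inductive step. The formula you want,
\[
\AV(X)\;=\;\closure{K\cdot\bigl(\AV(X')+\frakn\cap\fraks\bigr)},
\]
is the associated-variety formula for \emph{cohomological} induction from a $\theta$-stable parabolic; it is not the ``standard estimate'' for \emph{real} parabolic induction, and for the real parabolic $P_\RR$ here the complexified nilradical $\frakn$ is not $\theta$-stable, so $\frakn\cap\fraks$ does not carry the meaning you need. The paper's Lemma~\ref{key-lemma:induction.one.step} resolves exactly this point, and by two different routes: (a) work on the real side with wave front sets, where the correct formula is $\WF(\pi)=G_\RR\cdot(\WF(\pi')+\frakn_\RR)$ (Barbasch--Vogan, with a positivity hypothesis verified via Rossmann and Schmid--Vilonen), and then transport to $\AV$ through the Sekiguchi correspondence using Schmid--Vilonen; or (b) rewrite $I_{P_\RR}^{G_\RR}(\pi';\nu)$ as a cohomologically induced module $\call_S(\pi'\boxtimes\pi'')$ from a \emph{different}, $\theta$-stable parabolic with real Levi $U(p',q')\times U(h,h)$, where $\pi''$ is an irreducible degenerate principal series of $U(h,h)$, and only then apply the $\theta$-stable formula $\AV=K\cdot(\AV(\pi'\boxtimes\pi'')+\fraku\cap\fraks)$. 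In route (b) the extra factor $\AV(\pi'')$ contributes all $h+1$ signed two-column diagrams, which is precisely what produces $\ind(\{T'\})$.

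So your case analysis paralleling Lemma~\ref{lemma:gind.of.connected.components} cannot get off the ground from the formula you wrote; you first need one of the two mechanisms above to translate real parabolic induction into a statement about $K$-orbits in $\fraks$. Once that is in place, the matching with the combinatorial $\gind$ is indeed a direct calculation (the paper does it in an appendix via explicit Jordan blocks), and the rest of your outline goes through.
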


\begin{remark}
\label{remark:even.nilpotent.is.associated.variety}
(1)\ 
For an even nilpotent orbit $\Gorbit$, the orbit graph $ \Ograph_K(\Gorbit) $ is connected 
(Corollaries~\ref{cor:even.nilpotent.AIII} and \ref{corollary:connectedness.of.orbit.graph.type.BCD}), 
and the claim (1) of Theorem~\ref{thm:for-AIII} holds by Theorem 4.2 and Corollary 4.4 in \cite{Nishiyama.2011}.    
The associated Harish-Chandra module constructed in \cite{Nishiyama.2011} is also a degenerate principal series representation 
and it gives essentially the same representation as the present construction.  
See also \cite{Barbasch.Bozicevic.1999} and \cite{Matumoto.Trapa.2007}.

(2)\ 
In general, the containment 
$ \closure{\Gorbit \cap \fraks} \subset \closure{\Gorbit} \cap \fraks $ 
is strict even for an even nilpotent orbit $ \Gorbit $.  
See \cite[Remark~4.3]{Nishiyama.2011}.
\end{remark}

In the rest of this section, 
we prove Theorem~\ref{thm:for-AIII}.  
The proof is divided into several subsections.

\subsection{}
\label{subsection:one.step.induction}

Let us recall $ \lambda $ and $ \lambda' $ in \S \ref{subsec:AIII-induction.of.subgraph}.  
We mainly keep the notation in \S \ref{subsec:AIII-induction.of.subgraph} in this subsection.  
Thus we remove two columns of the same length $ h $ from $ \lambda $ and obtain $ \lambda' $.  
Put
\begin{equation*}
n' = n - 2 h , \quad 
(p', q') = ( p - h , q - h)
\end{equation*}
as before.  
Let us consider a real parabolic subgroup $ \psgR $ of $ \GR = U(p, q) $, whose Levi part is 
\begin{equation*}
\LR \simeq U(p', q') \times GL_h(\CC)  .
\end{equation*}
We realize $ \psgR $ in the following way.  
Let $ \{ e_i \mid 1 \leq i \leq n \} \subset \CC^n $ be the standard basis of $ \CC^n $, 
and we denote an indefinite Hermitian form $ (, ) $ by 
\begin{equation*}
(u, v) = \transpose{\conjugate{u}} I_{p,q} v 
\quad 
(u, v \in \CC^n) , \qquad
\text{ where } 
I_{p, q} = \begin{pmatrix} 1_p & \\ & - 1_q \end{pmatrix} .
\end{equation*}
Then $ \GR $ is realized as a matrix group which preserves the Hermitian form $ ( \, , \, ) $:
\begin{equation*}
\GR = U(p, q) = 
\{ g \in GL_n(\CC) \mid \transpose{\conjugate{g}} I_{p,q} g = I_{p,q} \} .
\end{equation*}
It is easy to see that a subspace 
$ V_h^{\pm} = \langle e_i \pm e_{n - i + 1} \mid 1 \leq i \leq h \rangle $ 
is totally isotropic with respect to $ ( , ) $.  
Then the parabolic subgroup 
\begin{equation*}
\psgR = \{ g \in U(p, q) \mid g(V_h^+) = V_h^+ \}
\end{equation*}
satisfies our requirement.  
In fact a Levi subgroup $ \LR $ is given by 
\begin{equation*}
\LR = \{ g \in U(p, q) \mid g(V_h^\pm) = V_h^\pm \} .
\end{equation*}
If we put 
$ W_{p', q'} = ( V_h^+ \oplus V_h^- )^{\bot} $, 
the orthogonal complement of $ V_h^+ \oplus V_h^- $ with respect to $ (, ) $, 
then $ \LR $ clearly preserves $ W_{p', q'} $ and 
\begin{equation*}
\LR \ni g \mapsto 
(g \restrict_{W_{p', q'}} , g \restrict_{V_h^+} ) \in U(p', q') \times GL_h(\CC) 
\end{equation*}
gives an isomorphism.  
Note that the Hermitian form $ (, ) $ restricted to $ W_{p', q'} $ has the signature 
$ (p', q') $.  

For $ \nu \in \CC $ and a (possibly infinite dimensional) admissible representation $ \pi' $ 
of $ \GpR = U(p', q') $, 
let $ \pi'(\nu) $ be an admissible representation of $ \LR $ defined by 
\begin{equation*}
\pi'(\nu)(g) = \bigl|\det(g \restrict_{V_h^+})\bigr|^{\nu} \, \pi'(g \restrict_{W_{p', q'}}) .
\end{equation*}
We extend it to $ \psgR $ in such a way that $ \pi'(\nu) $ is trivial on the unipotent radical, and denote it by the same notation $ \pi'(\nu) $.  
We define 
\begin{equation*}
I(\pi'; \nu) = I_{\psgR}^{\GR}(\pi'; \nu) := \Ind_{\psgR}^{\GR} \pi'(\nu);
\end{equation*}
here induction is normalized as in \cite[Chapter VII]{Knapp.1986}.
Assume that $ \pi'$ is an irreducible representation of $ \GpR $ and 
the associated variety of its primitive ideal is $ \closure{\Gporbit_{\lambda'}} $.  

\begin{lemma}
\label{key-lemma:induction.one.step}
For a generic $ \nu \in \CC $, 
the standard module $ I_{\psgR}^{\GR}(\pi'; \nu) $ is irreducible, 
and we have 
\begin{equation*}
\AVgraph(I_{\psgR}^{\GR}(\pi'; \nu)) 
= \gind_{(G', K')}^{(G, K)}( \AVgraph(\pi') ).  
\end{equation*}
In particular, 
if $ \AVgraph(\pi') $ is a connected graph, 
$ \AVgraph(I_{\psgR}^{\GR}(\pi'; \nu)) $ is also connected.
\end{lemma}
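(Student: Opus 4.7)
The plan is to establish the two claims in sequence---generic irreducibility of $I_{\psgR}^{\GR}(\pi';\nu)$, and the identification $\AVgraph(I(\pi';\nu)) = \gind_{(G',K')}^{(G,K)}(\AVgraph(\pi'))$---after which the connectedness assertion is immediate from Lemma~\ref{lemma:gind.of.connected.components}. For generic irreducibility I would appeal to the standard fact that real parabolic induction along the one-parameter family $|\det|^\nu$ on the $GL_h(\CC)$ factor of $\LR$ yields an irreducible standard module away from a countable set of $\nu \in \CC$; this is a consequence of the Langlands classification and the analysis of reducibility points for $\GR = U(p,q)$.

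The computational core is to determine $\AV(I(\pi';\nu))$. The general principle is that the associated variety is compatible with parabolic induction: at the level of $G$-saturations, real parabolic induction from $\psgR$ realizes Lusztig--Spaltenstein induction $\Gporbit_{\lambda'} \leadsto \Gorbit_\lambda$, and in type A the induced partition is obtained from $\lambda'$ by adjoining two columns of height $h$, which is exactly the inverse of the column-removal operation that defines $\gind$. Hence the $G$-saturation of $\AV(I(\pi';\nu))$ is $\closure{\Gorbit_\lambda}$, so the vertices of $\AVgraph(I(\pi';\nu))$ lie inside $V(\Ograph_K(\Gorbit_\lambda))$ as required.

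The decisive step is to identify the actual $K$-orbits appearing in the irreducible decomposition. I would show that the natural geometric lift of an irreducible component $\closure{\Kporbit_{T'}} \subset \AV(\pi')$ through the conormal construction underlying parabolic induction produces precisely the union
\[
\bigcup_{T \in \ind(\{T'\})} \closure{\Korbit_T},
\]
the extra degrees of freedom coming from choosing $\pm$-fillings in the two newly adjoined columns compatible with $T'$---exactly the fillings used in \S\ref{subsec:AIII-induction.of.subgraph} to construct $\ind(\{T'\})$. I expect the hardest part to be verifying that every such $T$ yields a full-dimensional component, with no collapse to lower dimension and no extraneous $K$-orbit appearing; this should follow from a Lagrangian dimension count via Lemma~\ref{lem:AIII-dim-formula}, since the identity $\dim \Korbit_T = \tfrac12 \dim \Gorbit_\lambda$ forces each lift to be maximal among components of the same $G$-orbit. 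Once the vertex sets of $\AVgraph(I(\pi';\nu))$ and $\gind(\AVgraph(\pi'))$ coincide, the edge sets agree automatically because both graphs are full subgraphs of $\Ograph_K(\Gorbit_\lambda)$ on the same vertex set, and connectedness is then inherited via Lemma~\ref{lemma:gind.of.connected.components}.
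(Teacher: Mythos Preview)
Your outline has a genuine gap at the ``decisive step.'' You propose to show that the ``natural geometric lift of an irreducible component $\closure{\Kporbit_{T'}}$ through the conormal construction underlying parabolic induction'' produces exactly $\bigcup_{T \in \ind(\{T'\})} \closure{\Korbit_T}$, but you never specify what this construction is or which theorem computes it. For \emph{real} parabolic induction from $\psgR$, the parabolic is not $\theta$-stable, so there is no direct formula of the shape $\AV(\Ind_{\psgR}^{\GR}\pi') = K\cdot(\AV(\pi') + \text{something in }\fraks)$; the $K$-orbit structure of the associated variety is not visible from $\psgR$ alone. Your Lagrangian dimension count via Lemma~\ref{lem:AIII-dim-formula} only says that each $\Korbit_T$ with $T\in\ind(\{T'\})$ has the correct dimension \emph{if} it appears; it does nothing to show that these and only these orbits occur. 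The Lusztig--Spaltenstein statement you invoke is correct at the level of $G$-orbits but is far too coarse to pin down the $K$-orbits.

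The paper confronts exactly this difficulty and offers two routes around it. The first is analytic: use the Barbasch--Vogan formula $\AS(\Ind_{\psgR}^{\GR}\pi') = G_\RR\cdot(\AS(\pi')+\frakn_\RR)$ for the asymptotic support (equivalently wave front set, by Rossmann), then transfer to associated varieties via the Schmid--Vilonen theorem identifying $\AV$ with the Sekiguchi image of $\WF$; the signed-Young-diagram computation of $G_\RR\cdot(\closure{\Rporbit}+\frakn_\RR)$ is then a concrete matrix calculation carried out in the Appendix. The second is algebraic: rewrite $I_{\psgR}^{\GR}(\pi';\nu)$ as a cohomologically induced module $\call_S(\pi'\boxtimes\pi'')$ from a $\theta$-stable parabolic $\frakq$ with Levi $U(p',q')\times U(h,h)$ (this rewriting is the hard part, requiring the good-range hypothesis and the Langlands-parameter transfer theorem of Knapp--Vogan), after which the standard formula $\AV(\call_S(\cdot)) = K\cdot(\AV(\cdot)+(\fraku\cap\fraks))$ applies and can be evaluated in signed Young diagrams using results of Trapa. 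Either route supplies the missing bridge between real parabolic induction and $K$-orbits that your outline lacks.
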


\begin{proof}
The irreducibility statement is well-known (e.g~\cite[Remark 1, page
  174]{Knapp.1986}).  For the remainder, we sketch two proofs.  
The first is essentially analytic (but uses the difficult results of
\cite{Schmid.Vilonen.2000} to pass from the analytic invariant
of wave front set to associated varieties).  The second is essentially
algebraic (but uses the difficult results of \cite[Chapter XI]{Knapp.Vogan.1995}
to rewrite  parabolically induced representations as cohomologically
induced instead).

For the first sketch we begin with a few generalities.  (The results
of the next two paragraphs hold in the generality of any real
reductive group $G_{\RR}$.)  Let $\caln(\frakg_{\RR})$ denote the
nilpotent cone of $\frakg_{\RR}$.  Given a finite-length
representation $\pi$ of $G_{\RR}$ on a Hilbert space, we let
$\WF(\pi) \subset \caln(\frakg_{\RR})$ denote its wave front set in
the sense of Howe \cite{Howe.1981}.  (The wave front set is most
naturally defined as a subset of $\frakg_{\RR}^{*}$; here and
elsewhere we identify $\frakg_\RR$ with $\frakg_\RR^{*}$ by means of
an invariant form.  Since $\WF(\pi)$ and the other invariants we
consider are invariant under scaling, the choice of form does
not matter.)  According to \cite[Theorem C]{Rossmann.1995},
$\WF(\pi)$ coincides with 
the asymptotic support 
$\AS(\pi)$ 
defined by Barbasch-Vogan
\cite{Barbasch.Vogan.1980}.  Moreover, \cite[Theorem D]{Rossmann.1995}
implies that if $\pi$ is assumed to be irreducible, then there are
$G_{\RR}$ orbits $\Rorbit_1, \dots, \Rorbit_\ell$ (each of which
generate the same $G$-orbit $\Gorbit$) such that
\begin{displaymath}
\WF(\pi) = \bigcup_{i=1}^{\ell} \closure{\Rorbit_i}.
\end{displaymath}
Finally, write $\Korbit_i$ for the $K$ orbit corresponding to $\Rorbit_{i}$
via the Seki\-guchi correspondence (e.g.~\cite[Chapter 9]{Collingwood.McGovern.1993}).
Then Schmid and Vilonen \cite{Schmid.Vilonen.2000} prove that
\begin{equation}
\label{e:BVconjecture}
\AV(\pi) = \bigcup_{i=1}^{\ell} \closure{\Korbit_i}.
\end{equation}

Now suppose $\pi$ is of the form $\Ind_{\psgR}^{\GR} (\pi')$
for an irreducible admissible representation $\pi'$ of $L_{\RR}$.
Using the inclusion $\lie{l}_{\RR}$ into $\frakg_{\RR}$, 
regard $\WF(\pi')$ as a subset of $\frakg_{\RR}$. 
We claim
\begin{equation}
\label{e:WFind}
\WF(\pi) = G_{\RR} \cdot \left (\WF(\pi') + \frakn_{\RR}\right ),
\end{equation}
where $\frakn_{\RR}$ denotes the Lie algebra of the nilradical $N_{\RR}$
of $P_{\RR}$.
According to \cite[Theorem C]{Rossmann.1995} mentioned above,
the assertion is equivalent to
\begin{equation}
\label{e:ASind}
\AS(\pi) = G_{\RR} \cdot \left ( \AS(\pi') + \frakn_{\RR} \right ).
\end{equation}
Under a technical positivity hypothesis
stated two sentences after \linebreak 
\cite[Equation (3)]{Barbasch.Vogan.1980},
\eqref{e:ASind} is proved in \cite[Theorem 3.5]{Barbasch.Vogan.1980}.
The positivity hypothesis may be verified as follows.
The construction
of \cite{Barbasch.Vogan.1980} assigns a real number to each
irreducible component of $\AS(\pi)$.  (Because there is
no need to normalize measures carefully in \cite{Barbasch.Vogan.1980}, these
real numbers are defined only up to positive scaling.  
Hence only their signs are well-defined in \cite{Barbasch.Vogan.1980}.)  
The crucial positivity hypothesis is that all of these
numbers are positive.
Meanwhile, after normalizing measures carefully, Rossmann
interprets these real numbers in \cite[Theorems B-C]{Rossmann.1995}
as integrals over certain Lagrangian cycles.  The
cycles are made somewhat more explicit in the work of Schmid-Vilonen
\cite{Schmid.Vilonen.1998}, where their positivity becomes apparent.  (In fact,
in \cite{Schmid.Vilonen.2000} they are shown to coincide with the positive integers
appearing in the associated cycle of $\pi$ (in the sense of
\cite{Vogan.1991}).)  Thus the positivity hypothesis always holds.  Hence
so does \eqref{e:ASind} (and equivalently \eqref{e:WFind}).
As mentioned around \eqref{e:BVconjecture}, 
the main results of \cite{Schmid.Vilonen.2000} then allows us to 
interpret the assertion of \eqref{e:WFind} as a computation of associated
varieties.

Return to the setting of the lemma. Suppose we
are given a nilpotent $K'$ orbit $\Kporbit$ parameterized by $T' \in
\syd(\lambda'; p',q')$.  Let $\Rporbit$ denote the nilpotent $G'_{\RR}$
orbit corresponding to $\Kporbit$ via the Sekiguchi correspondence.
Consider
\[
G_\RR\cdot \left ( \closure{\Rporbit} + \frakn_\RR \right).
\]
This is a closed $G_\RR$-invariant set of nilpotent elements 
(see \cite[Theorem 7.1.3]{Collingwood.McGovern.1993}).  Hence it may
be written as
\[
\closure{\Rorbit_1} \cup \cdots \cup \closure{\Rorbit_\ell}
\]
for nilpotent $G_\RR$ orbits $\Rorbit_i$, each of which is parameterized by
a signed Young diagram $T_i$ of signature $(p,q)$.  
By the discussion around \eqref{e:BVconjecture}
and \eqref{e:WFind}, 
the lemma amounts to establishing
$\{T_1, \dots, T_\ell\}$ is obtained from $T'$ by the 
procedure described
before Lemma \ref{lemma:gind.of.connected.components}.
This is a direct calculation whose details we will give in Appendix.

We next turn to the second approach to proving the lemma.
As remarked above, the main point is to rewrite $I_{\psgR}^{\GR}(\pi'; \nu)$
as a cohomologically induced representation.  To get started,
fix a $\theta$-stable parabolic subalgebra $\frakq = \frakl \oplus \fraku$
of $\frakg$ such that
\[
\frakl_\RR :=\frakl \cap \conjugate{ \frakl} \simeq \fraku(p',q') \oplus \fraku(h,h).
\]
Let $\LqR$ denote the analytic subgroup of $\GR$ with Lie algebra
$\frakl_\RR$, and set $S = \dim(\fraku \cap \frakk)$.  
Of course $\LqR \simeq \GR' \times \GR''$ with $\GR' =U(p',q')$ as above
and $\GR'' = U(h,h)$.
Fix a Cartan subalgebra $\frakh$ of $\frakl$ (and hence
of $\frakg$), write $\Delta(\fraku) \subset \frakh^*$ 
for the roots of $\frakh$ in $\fraku$, and let $\delta_\fraku$ denote
the half-sum of the elements of $\Delta(\fraku)$.
We follow the notation of
\cite[Chapter 5]{Knapp.Vogan.1995}.
In particular, 
given a $(\frakl, L^{\mathfrak{q}} \cap K)$ module $\pi' \boxtimes \pi''$, we may form the
cohomologically induced $(\frakg,K)$ module $\call_S(\pi' \boxtimes \pi'')$.

Next let $\psgR''$ denote a real parabolic subgroup of $\GR''$ with Levi
factor $GL(h,\CC)$.  Fix $\nu\in \CC$ as in the statement of the lemma 
and extend the character $|\det|^\nu$ of $GL(h,\CC)$ trivially
to the nilradical of $\psgR''$.  Set
\begin{equation}
\label{e:pi''}
\pi'' : = \Ind_{\psgR''}^{\GR''}(|\det|^\nu),
\end{equation}
where again the induction is normalized.  We may choose $\nu$ so that
\begin{enumerate}
\item[(a)] $\pi''$ is irreducible; and

\item[(b)] if $\chi \in \frakh^*$ denotes (a representative of) the
  infinitesimal character of $\pi' \boxtimes \pi''$, then
\[
\mathrm{Re}\left(\alpha^\vee(\chi + \delta_\fraku)\right) > 0
\quad \text{for all $\alpha \in \Delta(\fraku)$}.
\]
(In the terminology of \cite[Definition 0.49]{Knapp.Vogan.1995},
$\chi$ is said to be in the good range for $\frakq$.)
\end{enumerate}
With such a choice of $\nu$ fixed, our main technical claim is
as follows:
\begin{equation}
\label{e:transferclaim}
I_{\psgR}^{\GR}(\pi'; \nu) = \call_S(\pi' \boxtimes \pi'').
\end{equation}
To prove this, we compute the Langlands (quotient) parameters of both
sides. 
 For the left-hand side, we may assume the Langlands parameters
of $\pi'$ are given.  They consist of a cuspidal parabolic subgroup
$M_\RR'A_\RR'N_\RR'$ 
of $\GR'$, a discrete series or limit of discrete
series representations $\xi'$ of $M'_\RR$, and a suitably positive
character $\eta'$ of $A'_\RR$.  See for example the discussion at the
beginning of \cite[XI.9]{Knapp.Vogan.1995}.   In the notation of
Knapp-Vogan, let $I'(\xi',\eta')$ denote the standard continuous representation
of $\GR'$
parameterized by $M_\RR'A_\RR'N_\RR'$, $\xi'$, and $\eta'$.  By construction,
we have a surjection
\begin{equation}
\label{e:surjection'}
I'(\xi',\eta') \longrightarrow \pi'.
\end{equation}
Next we consider the
Langlands parameters for the character $|\det|^\nu$ of $GL(h,\CC)$.
Of course this is well-known (see, for example, \cite[Theorem
  4]{Knapp.1994}).  The parameters consist of a Borel subgroup $M^{GL}_\RR A^{GL}_\RR N^{GL}_\RR$
of $GL(h,\CC)$, 
a character $\xi^{GL}$ of $M^{GL}_\RR$, and an appropriately
positive character $\eta^{GL}$ of $A_\RR$.   In the obvious notation,
we have a surjection
\begin{equation}
\label{e:surjectionGL}
I^{GL}(\xi^{GL},\eta^{GL}) \longrightarrow |\det|^\nu.
\end{equation}
We can combine these two
Langlands parameters to get a Langlands parameter for $G_\RR$:
there exists a cuspidal parabolic subgroup $M_\RR A_\RR N_\RR$ of $\GR$
whose intersection with $\GR'$ is $M'_\RR A'_\RR N'_\RR$ and whose intersection
with $GL(h,\CC)$ is $M^{GL}_\RR A^{GL}_\RR N^{GL}_\RR$.  In this case 
\begin{align*}
M_\RR &= M'_\RR \times M^{GL}_\RR\\
A_\RR &= A'_\RR \times A^{GL}_\RR,
\end{align*}
and so we can form $\xi = \xi' \boxtimes \xi^{GL}$ and 
$\eta = \eta' \boxtimes \eta^{GL}$.  Then $N_\RR$ can be chosen so that
$(M_\RR A_\RR N_\RR, \xi, \eta)$
is a quotient Langlands parameter for $\GR$.  If we let
$I(\xi, \eta)$ denote the corresponding standard continuous representation
of $\GR$, then \eqref{e:surjection'}, \eqref{e:surjectionGL}, and
induction in stages give a surjection
\begin{equation}
\label{e:surjection}
I(\xi,\eta) \longrightarrow I_{\psgR}^{\GR}(\pi'; \nu),
\end{equation}
the image of which we have assumed is irreducible.
Thus the triplet $(M_\RR A_\RR N_\RR, \xi, \eta)$ is indeed a quotient Langlands parameter
for the induced representation $I_{\psgR}^{\GR}(\pi'; \nu)$, the left-hand side of \eqref{e:transferclaim}.

The more difficult part of the argument is computing the Langlands parameters
of right-hand side of \eqref{e:transferclaim}.  Fortunately 
\cite[Theorem 11.216]{Knapp.Vogan.1995} 
explains how to compute the
Langlands parameters of $\call_S(\pi' \boxtimes \pi'')$ in terms of
those for $\pi'$ and $\pi''$.  (To apply this theorem, 
we need the ``good range'' hypothesis detailed in item (b) above.)  We may assume,
as above, that we are
given the Langlands parameters of $\pi'$.  To compute the Langlands
parameters of $\pi''$, note that $M_\RR^{GL}A_\RR^{GL} \simeq (\CC^\times)^h$
is the Levi factor of a cuspidal parabolic subgroup $M''_\RR A''_\RR N''_\RR$
of $\GR''$.  Thus if we set $\xi'' = \xi^{GL}$ and $\eta'' = \eta^{GL}$,
we can choose $N''_\RR$ so that $(M''_\RR A''_\RR N''_\RR, \xi'', \eta'')$
is a Langlands parameter for $G_\RR''$.  Moreover, induction in stages
and \eqref{e:surjectionGL} imply that there is a surjection 
\[
I''(\xi'', \eta'') \longrightarrow \pi'',
\]
the image of which is irreducible by hypothesis.
So $(M''_\RR A''_\RR N''_\RR, \xi'', \eta'')$ is indeed a Langlands quotient
parameter for $\pi''$.  Then Theorem 
\cite[Theorem 11.216]{Knapp.Vogan.1995} gives that the Langlands parameters
of the right-hand side of \eqref{e:transferclaim} are exactly
those of the left-hand side computed above.
Thus \eqref{e:transferclaim} follows.

Given \eqref{e:transferclaim}, we can complete the second proof
of the lemma relatively easily.  The first ingredient is to apply a general
result about associated varieties of derived functor modules to the 
right-hand side of \eqref{e:transferclaim},
\begin{equation}
\label{e:avcomp}
\AV\left( \call_S(\pi' \boxtimes \pi'') \right ) 
= K \cdot \left( \AV(\pi' \boxtimes \pi'') + (\fraku \cap \fraks)\right).
\end{equation}
(In the case that $\pi' \boxtimes \pi''$ is one-dimensional, a
well-known argument is sketched in the introduction of
\cite{Trapa.2005}; the general case follows in much the same way.)
The next ingredient is the computation of $\AV(\pi'')$:
\cite[Corollary 5.4]{Nishiyama.2011} 
proves 
that $\AV(\pi'')$ consists
of the closures of the $h+1$ nilpotent orbits for $U(h,h)$ whose shape 
consists of $h$ rows of two boxes.  Finally, 
Proposition 3.1 and Corollary 3.2 of \cite{Trapa.2005} explain how to
compute the right-hand side of \eqref{e:avcomp} in terms of signed
Young diagrams.  Combined with \eqref{e:transferclaim}, the result
is that $\AV(I_{\psgR}^{\GR}(\pi'; \nu))$ consists of the closures of
the $K$ orbits parameterized by the signed Young diagrams obtained from
those parameterizing the irreducible components of
$\AV(\pi')$ by the 
procedure described
before Lemma \ref{lemma:gind.of.connected.components}.
This completes the second proof of the lemma.
\end{proof}

\subsection{}
\label{subsection:case.of.connected.orbit.graph}

Now let us consider the first claim of Theorem \ref{thm:for-AIII}, 
so we assume that $ \Ograph_K(\Gorbit_{\lambda}) $ is connected.  
Then, by Corollary \ref{cor:AIII-condition-of-connectedness}, 
if we remove two columns of the same length from $ \lambda $ repeatedly, 
finally we reach $ \lambda' $ of a diagram with only one column 
(possibly $ \lambda' $ is an empty diagram).  
Thus we have 
\begin{equation*}
\lambda = \lambda' + \sum_{i = 1}^t [2^{h_i}] ,
\end{equation*}
for some $ h_i $'s,
where $[2^{h_i}]$ denotes the partition $(2,2,\ldots,2)$ of length $h_i$.
Put 
\begin{equation*}
h = \sum_{i = 1}^t h_i , \quad 
n' = n - 2 h , \quad 
(p', q') = ( p - h , q - h) .  
\end{equation*}
Note that the notation $ \lambda', p', q' $ etc.{} is used in slightly different way 
from the former subsection.  
With these notations, we have $ \lambda' = (1^{n'}) \; (n' \geq 0) $ and 
if $ n' = 0 $, it means that $ \lambda' $ is an empty diagram.  

Let us consider a real parabolic subgroup $ \psgR $ of $ \GR = U(p, q) $ with Levi part 
\begin{equation}
\label{eq:LR.isomorphic.Upq.t-timesGL}
\LR \simeq U(p', q') \times GL_{h_1}(\CC) \times \cdots \times GL_{h_t}(\CC) .
\end{equation}
The construction of $ \psgR $ is similar to that in the former subsection.  
In fact, we simply repeat the procedure in \S \ref{subsection:one.step.induction} $ t $-times.  
Now consider a character $ \chi_{\nu} $ of $ \LR $ with a parameter 
$ \nu = ( \nu_1, \dots, \nu_t) \in \CC^t $ defined by 
\begin{equation*}
\chi_{\nu}(g) = \prod_{i = 1}^t \bigl| \det g_i \bigr|^{\nu_i}, 
\end{equation*}
where $ g_i \in GL_{h_i}(\CC) $ is the $ GL_{h_i} $-component of 
$ g \in \LR $  under the isomorphism \eqref{eq:LR.isomorphic.Upq.t-timesGL}.

Let $ I(\nu) = I_{\psgR}^{\GR}(\nu) $ be a degenerate principal series defined by 
\begin{equation*}
I(\nu) := \Ind_{\psgR}^{\GR} \chi_{\nu} ,
\end{equation*}
where $ \chi_{\nu} $ is extended to a character of $ \psgR $ which is trivial on the unipotent radical.  
Then we have

\begin{lemma}
For a generic $ \nu \in \CC^t $, 
the degenerate principal series 
$ I(\nu) = I_{\psgR}^{\GR}(\nu) $ is irreducible, and 
we have 
\begin{equation*}
\AVgraph(I(\nu)) = \Ograph_K(\Gorbit_{\lambda}).
\end{equation*}
\end{lemma}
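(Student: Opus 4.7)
The plan is to prove this lemma by induction on $t$, the number of pairs of columns of equal length that were peeled off from $\lambda$ to reach the single-column diagram $\lambda' = (1^{n'})$. At each step I will invoke Lemma \ref{key-lemma:induction.one.step}, which translates one step of parabolic induction into one step of the combinatorial induction $\gind$, and assemble successive parabolic inductions by induction in stages.

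For the base case $t = 0$, I take $\pi_0$ to be the trivial one-dimensional representation of $U(p', q')$. Its associated variety is $\{0\} = \closure{\Gorbit_{(1^{n'})} \cap \fraks}$, and $\syd((1^{n'}); p', q')$ consists of a single diagram, so $\AVgraph(\pi_0)$ is the one-vertex graph $\Ograph_{K'}(\Gorbit_{(1^{n'})})$ as required.

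For the inductive step, set $\lambda_s := (1^{n'}) + \sum_{i=1}^{s}[2^{h_i}]$ and $(p_s, q_s) := (p' + \sum_{i=1}^s h_i,\ q' + \sum_{i=1}^s h_i)$, so $\lambda_0 = \lambda'$ and $\lambda_t = \lambda$. Assume inductively that an irreducible Harish-Chandra module $\pi_{s-1}$ of $U(p_{s-1}, q_{s-1})$ has been constructed as an iterated degenerate principal series from a character, with $\AVgraph(\pi_{s-1}) = \Ograph_{K_{s-1}}(\Gorbit_{\lambda_{s-1}})$. I fix a real parabolic $\psgR^{(s)}$ of $U(p_s, q_s)$ with Levi $U(p_{s-1}, q_{s-1}) \times GL_{h_s}(\CC)$, constructed exactly as in \S\ref{subsection:one.step.induction}, and define
\[
\pi_s := \Ind_{\psgR^{(s)}}^{U(p_s, q_s)}\bigl(\pi_{s-1}(\nu_s)\bigr).
\]
Lemma \ref{key-lemma:induction.one.step} then gives, for $\nu_s$ outside a proper analytic subset of $\CC$, the irreducibility of $\pi_s$ together with $\AVgraph(\pi_s) = \gind\bigl(\AVgraph(\pi_{s-1})\bigr) = \gind\bigl(\Ograph_{K_{s-1}}(\Gorbit_{\lambda_{s-1}})\bigr)$. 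Since $\lambda_{s-1}$ inherits the parity pattern of Corollary \ref{cor:AIII-condition-of-connectedness} from $\lambda$, the orbit graph $\Ograph_{K_{s-1}}(\Gorbit_{\lambda_{s-1}})$ is a single connected component; by Lemma \ref{lemma:gind.of.connected.components} its $\gind$-image is a connected component of the (connected) graph $\Ograph_{K_s}(\Gorbit_{\lambda_s})$, and therefore must exhaust it. Induction in stages identifies $\pi_t$ with $I(\nu) = I_{\psgR}^{\GR}(\chi_\nu)$, completing the induction.

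The main obstacle is simultaneous genericity of $\nu$: each application of Lemma \ref{key-lemma:induction.one.step} excludes a proper analytic subset of the $\nu_s$-line, possibly depending on $(\nu_1, \ldots, \nu_{s-1})$. The finite union of these $t$ pullbacks is nevertheless a proper analytic subset of $\CC^t$, so on a Zariski-dense open set of parameters all intermediate inductions are irreducible at once, and for such $\nu$ the equality $\AVgraph(I(\nu)) = \Ograph_K(\Gorbit_\lambda)$ holds.
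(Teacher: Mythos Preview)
Your proof is correct and follows essentially the same approach as the paper: repeated application of Lemma~\ref{key-lemma:induction.one.step} to reduce to the single-column case, combined with Lemma~\ref{lemma:gind.of.connected.components} and the assumed connectedness of $\Ograph_K(\Gorbit_\lambda)$ to identify the induced graph with the full orbit graph. Your version is more explicit (you spell out the induction, verify intermediate connectedness via the parity pattern, and address simultaneous genericity of $\nu$), but the argument is the same.
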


\begin{proof}
We use Lemma \ref{key-lemma:induction.one.step} repeatedly, and conclude that 
\begin{equation*}
\AVgraph(I(\nu)) = \gind_{(G', K')}^{(G, K)}( \{ T' \} ),
\end{equation*}
where $ T' $ is the unique signed Young diagram in $ \syd(\lambda'; p', q') $ with a single column 
(possibly an empty diagram).  
By Lemma \ref{lemma:gind.of.connected.components}, we have 
$ \gind(\{ T' \}) = \Ograph_K(\Gorbit_{\lambda}) $ since the right hand side is a connected graph.
\end{proof}

\subsection{}

Let us consider a general partition $ \lambda $ of size $ n $.  
If we remove two columns of the same length successively from $ \lambda $, 
finally we obtain $ \lambda' $ with distinct column length.  
By Theorem \ref{thm:number.of.connected.components.of.orbit.graph}, 
$ \Ograph_{K'}(\Gporbit_{\lambda'}) $ is totally disconnected and individual 
$ \Kporbit_{T'} $ constitutes a connected component of the orbit graph.  
Thus, by Lemma \ref{lemma:gind.of.connected.components},
$ \gind(\{ \Kporbit_{T'} \}) \; (T' \in \syd(\lambda'; p', q')) $ 
exhausts connected components of 
$ \Ograph_K(\Gorbit_{\lambda}) $.  
By a result of Barbasch-Vogan \cite[Theorem 4.2]{Barbasch.Vogan.1983}, 
there exists a derived functor module $ \pi_{T'} $ of $ \GpR $ with the associated variety 
\begin{equation*}
\AV(\pi_{T'}) = \closure{\Kporbit_{T'}} .
\end{equation*}
We construct a real parabolic $ \psgR $ of $ \GR $ just as in 
the former subsection \S \ref{subsection:case.of.connected.orbit.graph}, 
and define an admissible representation $ \pi_{T'}(\nu) \; (\nu \in \CC^t) $ of the Levi subgroup 
$ \LR $ by 
\begin{equation*}
\pi_{T'}(\nu) (g) 
= \prod_{i = 1}^t \bigl| \det (g_i) \bigr|^{\nu_i} \cdot \pi_{T'}(g_0) , 
\end{equation*}
where
\begin{equation*}
g = ( g_0 ; g_1, \dots, g_t ) 
\in U(p', q') \times GL_{h_1}(\CC) \times \cdots \times GL_{h_t}(\CC) = \LR.
\end{equation*}
Let us consider a standard module 
\begin{equation*}
I(\pi_{T'}; \nu) = I_{\psgR}^{\GR}(\pi_{T'}; \nu) := \Ind_{\psgR}^{\GR} \pi_{T'}(\nu) .
\end{equation*}

\begin{lemma}
With the above notations, the associated graph 
\begin{equation*}
\AVgraph( I(\pi_{T'}; \nu) ) = \gind( \{ T' \} ) 
\end{equation*}
is a connected component of $ \Ograph_K(\Gorbit_{\lambda}) $, 
and these associated graphs exhaust connected components of the orbit graph 
$ \Ograph_K(\Gorbit_{\lambda}) $.
\end{lemma}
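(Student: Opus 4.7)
The plan is to iterate the one-step induction result, Lemma~\ref{key-lemma:induction.one.step}, along the chain of parabolics obtained by peeling off one pair of equal-length columns at a time, and then to combine this with Lemma~\ref{lemma:gind.of.connected.components} to identify the image as a connected component of $\Ograph_K(\Gorbit_\lambda)$.

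First I would set up the chain of intermediate partitions $\lambda = \lambda^{(0)} \supset \lambda^{(1)} \supset \cdots \supset \lambda^{(t)} = \lambda'$, where $\lambda^{(i)}$ is obtained from $\lambda^{(i-1)}$ by deleting a pair of equal-length columns of height $h_i$. Correspondingly I would introduce intermediate groups $G_\RR^{(i)} = U(p^{(i)}, q^{(i)})$ with $p^{(i)} = p - \sum_{j\le i} h_j$, $q^{(i)} = q - \sum_{j\le i} h_j$, and real parabolic subgroups $P_\RR^{(i)} \subset G_\RR^{(i-1)}$ with Levi isomorphic to $G_\RR^{(i)} \times GL_{h_i}(\CC)$, constructed exactly as in \S\ref{subsection:one.step.induction}. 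By induction in stages the standard module $I(\pi_{T'};\nu)$ is the $t$-fold iterate of these one-step inductions, after twisting by $|\det|^{\nu_i}$ on the $GL_{h_i}$ factor.

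Second, I would apply Lemma~\ref{key-lemma:induction.one.step} at each stage. For generic $\nu = (\nu_1,\dots,\nu_t) \in \CC^t$ each intermediate standard module is irreducible (the genericity set at each stage avoids a proper algebraic subvariety, so finitely many generic conditions may be imposed simultaneously), and the associated graph transforms at each step by $\gind_{(G^{(i)},K^{(i)})}^{(G^{(i-1)},K^{(i-1)})}$. Since $\gind$ is independent of the order of column removal (as noted at the end of \S\ref{subsec:AIII-induction.of.subgraph}), composing these $t$ steps yields
\[
  \AVgraph\bigl( I(\pi_{T'};\nu) \bigr) \;=\; \gind_{(G',K')}^{(G,K)}\bigl( \AVgraph(\pi_{T'}) \bigr).
\]
By the Barbasch--Vogan construction, $\AV(\pi_{T'}) = \closure{\Kporbit_{T'}}$ is a single irreducible component, so $\AVgraph(\pi_{T'}) = \{T'\}$ and the right-hand side is exactly $\gind(\{T'\})$.

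Third, I would invoke Lemma~\ref{lemma:gind.of.connected.components}, applied iteratively along the same chain $\lambda^{(0)} \supset \cdots \supset \lambda^{(t)}$: since $\lambda'$ has columns of all distinct lengths, Theorem~\ref{thm:number.of.connected.components.of.orbit.graph}\,\eqref{thm:number.of.connected.components.of.orbit.graph:item:2} says $\Ograph_{K'}(\Gporbit_{\lambda'})$ has no edges, so each singleton $\{T'\}$ is a connected component. Hence $\gind(\{T'\})$ is a connected component of $\Ograph_K(\Gorbit_\lambda)$, and as $T'$ ranges over $\syd(\lambda'; p',q')$ these exhaust all connected components by the bijection in Lemma~\ref{lemma:gind.of.connected.components}.

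The main obstacle I anticipate is the simultaneous genericity: one must ensure that a single $\nu \in \CC^t$ can be chosen so that every one of the $t$ intermediate standard modules is irreducible \emph{and} the associated-variety computation of Lemma~\ref{key-lemma:induction.one.step} applies at every stage. This should follow because each condition excludes only a proper Zariski-closed subset of the parameter, but writing it out carefully (for instance, by noting that the good-range condition used in the cohomological-induction proof of Lemma~\ref{key-lemma:induction.one.step} can be arranged inductively by choosing the $\nu_i$ in decreasing order of positivity) is the point that requires the most care.
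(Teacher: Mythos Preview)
Your proposal is correct and follows essentially the same approach as the paper: the paper's own proof simply says ``By repeated use of Lemma~\ref{key-lemma:induction.one.step}, we obtain the first equality $\AVgraph( I(\pi_{T'}; \nu) ) = \gind( \{ T' \} )$,'' and then invokes Lemma~\ref{lemma:gind.of.connected.components} for the bijection with connected components. Your write-up is a careful unpacking of exactly this argument, and your attention to the simultaneous genericity issue is a point the paper leaves implicit.
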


\begin{proof}
By repeated use of Lemma \ref{key-lemma:induction.one.step}, 
we obtain the first equality $ \AVgraph( I(\pi_{T'}; \nu) ) = \gind( \{ T' \} ) $.  
The right hand side is connected and it gives a bijection from 
$ \syd(\lambda'; p', q') $ to the connected components of the orbit graph 
$ \Ograph_K(\Gorbit_{\lambda}) $ by Lemma \ref{lemma:gind.of.connected.components}.
\end{proof}

\section{Appendix}

In this appendix, we calculate the induction of nilpotent orbits explicitly.  

\smallskip

Let $ \GR = U(p,q) $ and 
take a parabolic subgroup $ \psgR $ as in \S~\ref{subsection:one.step.induction}.  
The Levi part $ L_{\RR} $ of $ \psgR $ is isomorphic to $ GL_h(\CC) \times U(p', q') $, 
where $ (p', q') = (p - h, q -h) $.  
We take a nilpotent orbit $ \Rporbit $ of $ \GpR := U(p', q') $ and extend it trivially to $ L_{\RR} $, 
which we denote by the same notation.  
Then what we want to know is the induction 
$ \Ind_{\psgR}^{\GR} \Rporbit := $ the largest nilpotent orbit contained in $ G_\RR\cdot \left ( \closure{\Rporbit} + \frakn_\RR \right) $.
For this, we pick nilpotent elements $ X \in \Rporbit $ and 
$ \Xi \in \frakn_{\RR} $, and 
calculate the Jordan normal form of $ X + \Xi $.  
We assume that the Jordan normal form of $ X $ corresponds to a partition 
$ \lambda' = (\lambda'_i)_{1 \leq i \leq \ell} $ of $ p' + q' $.  
Then the Jordan normal form of $ X + \Xi $ corresponds to 
a partition which is obtained from $ \lambda' $ by adding $ 2 $ in different $ h $-places (and rearranging the parts in nonincreasing order).
We will explain this below, but some remarks are in order.

In fact, the nilpotent orbit $ \Rporbit $ is parametrized by a signed Young diagram $ T' $, 
and the so-obtained partition (adding $ 2 $ in $ h $-places) has unique signature compatible with $ T' $.  
So the Jordan normal form (the shape of the diagram) is enough to specify the obtained $ \GR $-orbits.  
Among them, the largest one with respect to the closure relation is 
$ \lambda' + [2^h] $ (adding $ 2 $ in the first $ h $-places).  
This is what we want in \S~\ref{subsection:one.step.induction}.  

So the rest of Appendix is devoted to specify $ X $ and $ \Xi $ in the matrix form and calculate 
the Jordan normal form of $ X + \Xi $ explicitly.

\medskip

We realize $ \GR = U(p, q) $ as in \S~\ref{subsection:one.step.induction}, 
and denote by $ e_i $ the fundamental vector with $ 1 $ in the $ i $-th coordinate and $ 0 $ elsewhere.  
We choose a basis of the indefinite unitary space $ \CC^{p, q} $ as 
\begin{equation*}
\{ e_i + e_{n - i +1} \}_{1 \leq i \leq h} \cup \{ e_{h + j} \}_{1 \leq j \leq p' + q'} \cup \{ e_i - e_{n - i +1} \}_{1 \leq i \leq h} .
\end{equation*}
Using the coordinate for this basis, 
an element in $ \lie{u}(p, q) $ is represented in the form
\begin{equation*}
\begin{pmatrix}
A & \alpha & \beta & B \\
\gamma & X_{11} & X_{12} & -\alpha^{\ast} \\
\delta & X_{21} & X_{22} & \beta^{\ast} \\
C & - \gamma^{\ast} & \delta^{\ast} & - A^{\ast}
\end{pmatrix}
\qquad
\begin{array}{l}
X = (X_{ij}) \in \lie{u}(p', q'), \; A \in \lie{gl}_h(\CC) 
\\
B^{\ast} = - B, \; C^{\ast} = - C
\end{array}
\end{equation*}
In this coordinate, an element in  
the Lie algebra $ \lie{p}_{\RR} $ of the parabolic subgroup $ \psgR $ is represented in the block upper triangular form
\begin{equation*}
\begin{pmatrix}
A & \xi & B \\
 & X & \eta \\
 & & - A^{\ast}
\end{pmatrix}
\qquad
\xi = ( \alpha, \beta), \; \eta = \begin{pmatrix} - \alpha^{\ast} \\ \beta^{\ast} \end{pmatrix} .
\end{equation*}
We denote an element in the nilpotent radical $ \frakn_{\RR} $ by 
\begin{equation}
\label{eqn:nilpotents.in.Xi}
\Xi = 
\begin{pmatrix}
0 & \xi & B \\
 & 0 & \eta \\
 & & 0
\end{pmatrix}
\end{equation}
and pick a nilpotent element $ X \in \lie{u}(p', q') $ corresponding to the signed Young diagram $ T' $ above, 
which has the shape $ \lambda' $.  
By abuse of notation, we denote the embedded $ X $ into $ \lie{g}_{\RR} = \lie{u}(p, q) $ by the same letter $ X $.  
The embedding is specified by the matrix form above.
Then we calculate 
\begin{equation*}
(X + \Xi)^k = 
\begin{pmatrix}
0 & \xi X^{k - 1} & \xi X^{k - 2} \eta \\
 & X^k & X^{k - 1} \eta \\
 & & 0
\end{pmatrix}
\qquad 
(k \geq 2).
\end{equation*}
From this, we conclude that if $ X $ is $ k $-step nilpotent, then $ X + \Xi $ is $ (k + 2) $-step nilpotent 
extending the length by $ 2 $.  

Let us try to get more specific information.  
We re-arrange (a part of) the basis 
$ e_{h + 1} , e_{h + 2}, \dots, e_{h + p' + q'} $ to get a Jordan normal form 
$ X = J_{\lambda_1'} \oplus J_{\lambda_2'} \oplus \cdots \oplus J_{\lambda_{\ell}'} $, where 
$ J_m $ denotes a Jordan cell of size $ m $ with zeroes on the diagonal and $ 1 $'s on the upper diagonal (and zero elsewhere).
The calculation tells that we can enlarge the Jordan cell by $ 2 $ in each cell.  
But since the rank of $ \xi $ (or $ B $) is at most $ h $, we can choose at most $ h $-cells freely.

We exhibit this by an example, where $ X $ has $ 3 $ cells and $ h = 2 $.
Also, let us put $ \lambda' = (m_1, m_2, m_3) $, 
so that $ X = J_{m_1} \oplus J_{m_2} \oplus J_{m_3} $.  
For this $ X $ we have a direct decomposition of the vector space 
$ V = \CC^{p', q'} $ into 
$ V = V(m_1) \oplus V(m_2) \oplus V(m_3) $ 
such that $ X $ acts on $ V(m_i) $ by $ J_{m_i} $.  
Choose a vector $ v_i \in V(m_i) $ so that $ X^{m_i -1} v_i = J_{m_i}^{m_i - 1} v_i \neq 0 $.  
Then $ (v_i, X^{m_i - 1} v_i) \neq 0 $, where $ ({-},{-}) $ is the indefinite Hermitian form on $ V $.  
For this, see \cite[\S~9.3]{Collingwood.McGovern.1993}.  

Put $ \eta_{ij} = (v_i, v_j) $.  Then Equation 
\eqref{eqn:nilpotents.in.Xi} for $ \eta = \eta_{ij} $ defines $ \xi = \xi_{ij} $.  
It is easy to see that 
if $ \transpose{\xi_{ij}} = (v_i^{\ast}, v_j^{\ast}) $, 
then $ \transpose{v_i^{\ast}} X^{k -2} v_j = \delta_{ij} (v_i, X^{k -2} v_j) $ and 
we get 
\begin{equation*}
\xi_{ij} X^{k -2} \eta_{ij} = 
\begin{pmatrix}
(v_i, X^{k -2} v_i) & 0 \\
0 & (v_j, X^{k -2} v_j) 
\end{pmatrix} 
.
\end{equation*}
Now denote by $ \Xi_{ij} \in \frakn_{\RR} $ the matrix $ \Xi $ in 
\eqref{eqn:nilpotents.in.Xi} replaced $ \eta = \eta_{ij} $ and $ \xi = \xi_{ij} $.  
Then the Jordan type of $ X + \Xi_{12} $ is 
$ J_{m_1 + 2} \oplus J_{m_2 + 2} \oplus J_{m_3} $.  
Other cases where the induced nilpotent is $ X + \Xi_{ij} $ can be treated similarly.


\begin{thebibliography}{KnV96}

\bibitem[BB99]{Barbasch.Bozicevic.1999}
Dan Barbasch and Mladen Bo{\v{z}}i{\v{c}}evi{\'c}, \emph{The associated variety
  of an induced representation}, Proc. Amer. Math. Soc. \textbf{127} (1999),
  no.~1, 279--288. \MR{1458862 (99b:22024)}

\bibitem[BC77]{Burgoyne.Cushman.1977}
N.~Burgoyne and R.~Cushman, \emph{Conjugacy classes in linear groups}, J.
  Algebra \textbf{44} (1977), no.~2, 339--362. \MR{0432778 (55 \#5761)}

\bibitem[BV80]{Barbasch.Vogan.1980}
Dan Barbasch and David Vogan, \emph{The local structure of characters}, 
J.~Func.~Anal.~\textbf{37} (1980), 27--55.
\MR{0576644 (82e:22024)} 

\bibitem[BV83]{Barbasch.Vogan.1983}
Dan Barbasch and David Vogan, 
\emph{Weyl group representations and nilpotent
  orbits}, Representation theory of reductive groups ({P}ark {C}ity, {U}tah,
  1982), 21--33, Progr. Math., vol.~40, Birkh\"auser Boston, Boston, MA, 1983.
  \MR{733804 (85g:22025)}

\bibitem[CM93]{Collingwood.McGovern.1993}
David~H. Collingwood and William~M. McGovern, \emph{Nilpotent orbits in
  semisimple {L}ie algebras}, Van Nostrand Reinhold Mathematics Series, Van
  Nostrand Reinhold Co., New York, 1993. \MR{1251060 (94j:17001)}

\bibitem[Djo82]{Djokovic.1982}
Dragomir~{\v{Z}}. Djokovi{\'c}, \emph{Closures of conjugacy classes in
  classical real linear {L}ie groups. {II}}, Trans. Amer. Math. Soc.
  \textbf{270} (1982), no.~1, 217--252. \MR{642339 (84f:22016)}

\bibitem[Hel78]{Helgason.1978}
Sigurdur Helgason, \emph{Differential geometry, {L}ie groups, and symmetric
  spaces}, Pure and Applied Mathematics, vol.~80, Academic Press Inc. [Harcourt
  Brace Jovanovich Publishers], New York, 1978. \MR{514561 (80k:53081)}

\bibitem[Ho79]{Howe.1981}
Roger Howe,
\emph{Wave front sets of representations of Lie groups},
Automorphic forms, representation theory and arithmetic (Bombay, 1979), 117--140, 
Tata Inst. Fund. Res. Studies in Math., vol.~10, 
Tata Inst. Fundamental Res., Bombay, 1981.
\MR{0633659 (83c:22014)}
 
\bibitem[K94]{Knapp.1994} A.~W.~Knapp,
\emph{Local Langlands correspondence: the Archimedean case},
Motives (Seattle, WA, 1991), 393--410, Proc.~Sympos.~Pure Math.,
vol.~{55}, AMS, Providence, RI, 1994.
\MR{1265560 (95d:11066)}


\bibitem[Kn86]{Knapp.1986} Anthony W.~Knapp, \emph{Representations
of Semisimple Lie Groups: An Overview Based on Examples}, 
Princeton Mathematical Series, vol.~36,
Princeton University Press, Princeton, NJ, 1986.
\MR{855239 (87j:22022)}



\bibitem[KnV96]{Knapp.Vogan.1995} Anthony W.~Knapp and David A.~Vogan, Jr.,
\emph{Cohomological Induction and Unitary Representations}, 
Princeton Mathematical Series,
vol.~ 45,
Princeton University Press, Princeton, NJ, 1995.
\MR{1330919 (96c:22023)}


\bibitem[KP79]{Kraft.Procesi.1979}
Hanspeter Kraft and Claudio Procesi, \emph{Closures of conjugacy classes of
  matrices are normal}, Invent. Math. \textbf{53} (1979), no.~3, 227--247.
  \MR{549399 (80m:14037)}

\bibitem[KR71]{Kostant.Rallis.1971}
B.~Kostant and S.~Rallis, \emph{Orbits and representations associated with
  symmetric spaces}, Amer. J. Math. \textbf{93} (1971), 753--809. \MR{0311837
  (47 \#399)}

\bibitem[MT07]{Matumoto.Trapa.2007}
Hisayosi Matumoto and Peter~E. Trapa, \emph{Derived functor modules arising as
  large irreducible constituents of degenerate principal series}, Compos. Math.
  \textbf{143} (2007), no.~1, 222--256. \MR{2295203 (2008f:22013)}

\bibitem[{Nis}11]{Nishiyama.2011}
K.~{Nishiyama}, \emph{{Asymptotic cone of semisimple orbits for symmetric
  pairs}}, Adv.~Math., \textbf{226} (2011), 4338--4351.

\bibitem[Oht86]{Ohta.1986}
Takuya Ohta, \emph{The singularities of the closures of nilpotent orbits in
  certain symmetric pairs}, Tohoku Math. J. (2) \textbf{38} (1986), no.~3,
  441--468. \MR{854462 (88k:32077)}

\bibitem[Oht91]{Ohta.1991}
Takuya Ohta, 
\emph{The closures of nilpotent orbits in the classical symmetric
  pairs and their singularities}, Tohoku Math. J. (2) \textbf{43} (1991),
  no.~2, 161--211. \MR{1104427 (93c:22036)}
  
\bibitem[Ro95]{Rossmann.1995}
Wolf Rossmann,
\emph{Picard-Lefschetz theory and characters of a semisimple Lie group}.
Invent.~Math., \textbf{121} (1995), 570--611. \MR{1353309 (96j:22017)}
 
 
\bibitem[SV98]{Schmid.Vilonen.1998} 
Wilfried Schmid and Kari Vilonen,
{\emph Two geometric character formulas for reductive Lie groups},
J.~Amer.~Math.~Soc., \textbf{11} (1998), no.~4, 799--867.
\MR{1612634 (2000g:22020)}

  
\bibitem[SV00]{Schmid.Vilonen.2000}
Wilfried Schmid and Kari Vilonen,
\emph{Characteristic cycles and wave front cycles of representations of
   reductive Lie groups}, Ann.~of Math.~(2) \textbf{151} (2000), 1071--1118.
\MR{1779564 (2001j:22017)}

  
\bibitem[Tr05]{Trapa.2005}
Peter E.~Trapa, 
\emph{Richardson orbits for real classical groups},
J.~Algebra, \textbf{286} (2005), no.~2, 361--385. \MR{2128022 (2006g:22012)}


\bibitem[Vog91]{Vogan.1991}
David~A. Vogan, Jr., \emph{Associated varieties and unipotent representations},
  Harmonic analysis on reductive groups ({B}runswick, {ME}, 1989), 315--388, Progr.
  Math., vol. 101, Birkh\"auser Boston, Boston, MA, 1991.
  \MR{1168491 (93k:22012)}

\end{thebibliography}


\providecommand{\bysame}{\leavevmode\hbox to3em{\hrulefill}\thinspace}
\providecommand{\MR}{\relax\ifhmode\unskip\space\fi MR }
\providecommand{\MRhref}[2]{%
  \href{http://www.ams.org/mathscinet-getitem?mr=#1}{#2}
}
\providecommand{\href}[2]{#2}
\renewcommand{\MR}[1]{}

\end{document}